\title{Sparsity of Fourier mass of passively advected scalars in the Batchelor regime}
\author{Alex Blumenthal\footnote{This material is based upon work supported by the National Science Foundation under grant
nos. DMS-2009431 and DMS-2237360.} \and Manh Khang Huynh}
\date{\today}
    \newtheorem{theorem}{Theorem}[section]
    \newtheorem{corollary}[theorem]{Corollary}
    \newtheorem{lemma}[theorem]{Lemma}
    \newtheorem{proposition}[theorem]{Proposition}
    \theoremstyle{definition}
    \newtheorem{definition}[theorem]{Definition}
    \newtheorem{remark}[theorem]{Remark}
    \newtheorem{assumption}[theorem]{Assumption}
    \newtheorem{claim}[theorem]{Claim}
    \newtheorem{example}[theorem]{Example}
\begin{document}


\newcommand{\A}{\mathbb A}
\newcommand{\C}{\mathbb{C}}
\newcommand{\D}{\mathbb{D}}
\newcommand{\E}{\mathbb{E}}
\newcommand{\F}{\mathbb{F}}
\newcommand{\N}{\mathbb{N}}
\renewcommand{\P}{\mathbb{P}}
\newcommand{\R}{\mathbb{R}}
\newcommand{\X}{\mathbb{X}}
\newcommand{\Z}{\mathbb{Z}}


\newcommand{\Uc}{\mathcal{U}}
\newcommand{\Bc}{\mathcal{B}}
\newcommand{\Fc}{\mathcal{F}}
\newcommand{\Hc}{\mathcal{H}}
\newcommand{\Lc}{\mathcal{L}}
\newcommand{\Rc}{\mathcal{R}}
\newcommand{\Gc}{\mathcal{G}}
\newcommand{\Ac}{\mathcal{A}}
\newcommand{\Sc}{\mathcal{S}}
\renewcommand{\Hc}{\mathcal{H}}
\newcommand{\Oc}{\mathcal{O}}
\newcommand{\Wc}{\mathcal{W}}
\newcommand{\Nc}{\mathcal{N}}
\newcommand{\Mc}{\mathcal M}
\newcommand{\Ec}{\mathcal E}


\newcommand{\Bs}{\mathscr{B}}
\newcommand{\As}{\mathscr{A}}


\renewcommand{\a}{\alpha}
\renewcommand{\b}{\beta}
\newcommand{\g}{\gamma}
\renewcommand{\d}{\delta}
\newcommand{\e}{\epsilon}
\renewcommand{\l}{\lambda}
\renewcommand{\L}{\Lambda}


\newcommand{\lie}{\text{Lie}}
\newcommand{\dlangle}{\langle \langle}
\newcommand{\drangle}{\rangle \rangle}
\newcommand{\pd}{\partial}
\newcommand{\len}{\text{Len}}
\newcommand{\diam}{\operatorname{diam}}
\newcommand{\graph}{\operatorname{graph}}
\newcommand{\lip}{\operatorname{Lip}}
\newcommand{\Id}{\operatorname{Id}}
\newcommand{\dist}{\operatorname{dist}}
\newcommand{\ds}{/ \! /}
\newcommand{\codim}{\operatorname{codim}}
\newcommand{\Slope}{\operatorname{Slope}}
\newcommand{\Gap}{\operatorname{Gap}}
\newcommand{\Spec}{\operatorname{Spec}}

\newcommand{\T}{\mathbb T}
\newcommand{\Cc}{\mathcal C}
\newcommand{\Leb}{\operatorname{Leb}}
\newcommand{\Lip}{\operatorname{Lip}}
\renewcommand{\graph}{\operatorname{graph}}

\newcommand{\Len}{\operatorname{Len}}
\newcommand{\Pc}{\mathcal P}

\newcommand{\Bor}{\operatorname{Bor}}
\newcommand{\supp}{\operatorname{Supp}}
\newcommand{\Gr}{\operatorname{Gr}}

\newcommand{\Var}{\operatorname{Var}}

\newcommand{\Span}{\operatorname{Span}}

\newcommand{\PP}{\mathbf{P}}
\newcommand{\EE}{\mathbf{E}}
\newcommand{\bVar}{\mathbf{Var}}

\newcommand{\Reynolds}{\operatorname{Re}}

\maketitle

\begin{abstract}
    In 1959, Batchelor gave a prediction for the power spectral density of a passive scalar advected by an incompressible fluid exhibiting shear-straining, a mechanism for the creation of small scales in the scalar \cite{batchelor1959small}. Recently, a `cumulative' version of this law, summing over Fourier modes below a given wavenumber $N$, was given for a broad class of passive scalars under incompressible advection, including by solutions to the stochastic Navier-Stokes equations \cite{bedrossianBatchelorSpectrumPassive2020}. 
    
    This paper addresses to what extent Fourier mass of such passive scalars truly saturates the predicted power law scaling due to Batchelor. Via discrete-time pulsed-diffusion models of the advection-reaction equations, we exhibit situations compatible with the cumulative law but for which the distribution of Fourier mass among wavenumbers $|k| \leq N$ is relatively \emph{sparse} and much smaller than a `mode-wise' version of Batchelor's original prediction. In the same situations we also establish an `exponential radial shell' version of Batchelor's laws via a novel application of the method of spectral distributions.


    
\end{abstract}
  
\section{Introduction}\label{sec:intro}

    Let $\Omega = \T^d \cong [0,1)^d$ be the periodic box and assume $g : [0,\infty) \times \Omega \to \R$ is a solution to the following advection-diffusion equation: 
    \begin{align}\label{eq:PSAintro} \begin{cases}
        \partial_t g + u \cdot \nabla g = \kappa \Delta g + S  \\
        g(0, x) = g_0(x) 
    \end{cases}\end{align}
    Here, $u : [0,\infty) \times \Omega \to \R^d$ is a smooth, divergence free vector field (i.e., $\nabla \cdot u \equiv 0$); $S = S(t, x)$ is a time-varying, spatially-smooth source term with $\int S(t, x) dx \equiv 0$; $g_0 =  g_0(x) \in L^2(\T^d)$ is a fixed initial condition; and $\kappa > 0$ is a fixed real number. Throughout, $\nabla$ and $\Delta$ refer to the spatial gradient and Laplacian, respectively. Below and throughout, for $h : \T^d \to \R$ we write $\Fc[h](k) = \hat h(k)$ for the Fourier coefficient of $h$ at wavenumber $k = (k^1, \dots, k^d) \in \Z^d$ in the usual Fourier series representation $h = \sum_{k \in \Z^d} \hat h(k) e_k$, where $e_k(x) := e^{i 2\pi (k \cdot x)}, x \in \T^d,$ is the standard Fourier basis on $\T^d$.  For classical solutions $g$ of \eqref{eq:PSAintro} it holds that $\int g(t, x) dx$ is constant in time, and so without loss we will assume throughout that $\int g_0(x) dx = 0$, hence $\int g(t, x) dx \equiv 0$ for all $t \geq 0$.

    Equation \eqref{eq:PSAintro} models the time evolution of a passive scalar in a fluid moving according to the velocity field $u(t, x)$. When, e.g., $g(t, x)$ is the concentration of a solute in the fluid, the $\kappa \Delta$ term models molecular diffusion, and $\kappa$ itself is interpreted as a \emph{diffusivity} parameter. 
    This paper concerns the distribution of the Fourier mass of a passively advected scalar $g(t, x)$ evolving according to \eqref{eq:PSAintro} at long times $t$ and for small values of the diffusivity $\kappa$, a situation referred to as \emph{passive scalar turbulence}, analogous to other turbulent regimes such as hydrodynamic turbulence in the limit of vanishing viscosity in the Navier-Stokes equation. 
    
    Of interest in this paper is a special case of the so-called \emph{Batchelor regime} of passive scalar turbulence, which can be roughly stated as follows, using terms which will be elaborated on as we progress: 
    \begin{itemize}
        \item[(a)] the velocity field $u(t, x)$ is stationary in time $t$ and governed by some fixed, $\kappa$-independent statistical law, e.g., statistically stationary solutions to the Navier-Stokes equations with $d = 2$ at fixed Reynolds number $\Reynolds$; 
        \item[(b)] the passive tracer (Lagrangian) flow $\varphi^t : \T^d \to \T^d$ generated by $u(t, x)$, defined from 
        \begin{align}\label{eq:defnLagFlowIntro}\frac{d}{dt} \varphi^t(x) = u(t, \varphi^t(x)) \, , \end{align} 
        exhibits \emph{shear-straining} across the domain $\Omega$, i.e., the spatial Jacobians $|D \varphi^t(x)|$ grow exponentially in time, leading to strong expansion/contraction of tangent vectors in various directions; and 
        \item[(c)] the source term $S(t, x)$ is also stationary in time, $\kappa$-independent, and localized to large scales (low Fourier modes). 
    \end{itemize}
     Under these assumptions, there are three major competing effects in \eqref{eq:PSAintro}: 
    \begin{itemize}
        \item[(i)] advection by the velocity field $u$, which transmits $L^2$ mass of $g$ from coarse to fine spatial scales under assumptions (a) and (b); 
        \item[(ii)] diffusivity from the $\kappa \Delta$ term, which dampens features of $g$ at scales $\approx \sqrt{\kappa}$ when $\kappa > 0$ is small; and 
        \item[(iii)] the source term $S$, which introduces $L^2$ mass to $g$ at some fixed spatial scale by assumption (c). 
    \end{itemize}
    The combination of these three effects produces statistically stationary behavior for the scalar $g$ at long times: a nonequilibrium steady state balancing all three effects and giving rise to a fat tail of Fourier mass at wavenumbers $k \in \Z^d$, predicted to obey \emph{Batchelor's law}
    \begin{align} \label{eq:modewiseIntro} |\hat g(k)|^2 \approx |k|^{-d} \qquad \text{ for all } \qquad  k \in \Z^d, \, \, 1 \ll |k| \ll \kappa^{-1/2} \,. \end{align}
    The formation of small scales in passive scalar turbulence, outlined above, is analogous to the energy cascade in isotropic hydrodynamic turbulence. In this vein, the power spectral law of Batchelor is an analogue of the Kolmogorov $-5/3$ law for the power spectral density of a turbulent velocity field. 

    In the series of papers \parencite{bedrossianLagrangianChaosScalar2018,bedrossianAlmostsureExponentialMixing2019,bedrossianAlmostsureEnhancedDissipation2021}, it is shown that conditions (a) and (b) are met for a broad class of spatially Sobolev-regular velocity fields $u(t, x)$, including solutions to the Navier-Stokes equations with suitable white-in-time, spatially Sobolev random forcing. Finally, the following \emph{cumulative} version of Batchelor's law was shown in \cite{bedrossianBatchelorSpectrumPassive2020}: 
    \begin{align}\label{eq:cumLawIntro} \E \, \| \Pi_{\leq N} g\|_{L^2}^2 \approx \log N \qquad \text{ for } \qquad  1 \ll N \ll \kappa^{-1/2} \,, 
    \end{align}
    where $\Pi_{\leq N}$ is a Galerkin truncation to Fourier modes $|k| \leq N$, and where $\E$ averages over statistically stationary passive scalars $g$. 

    We refer to \eqref{eq:cumLawIntro} as a \emph{cumulative law} for the Fourier mass of $g$. This law is compatible with but strictly weaker than the ``mode-wise'' power law \eqref{eq:modewiseIntro} originally predicted by Batchelor. In all, the results of \cite{bedrossianBatchelorSpectrumPassive2020} provide substantial evidence that in this special case of the Batchelor regime, the cumulative law \eqref{eq:cumLawIntro} is, in fact, a universal one, whether or not the original prediction \eqref{eq:modewiseIntro} holds. 
   
    This paper concerns the validity of the ``mode-wise'' version \eqref{eq:modewiseIntro} of Batchelor's law, and is an initial study into the extent to which it is universal among incompressible fluid advection models. In what follows, we establish, using pulsed-diffusion models, a collection of mechanisms compatible with the cumulative law \eqref{eq:cumLawIntro} but for which the \emph{mode-wise law} \eqref{eq:modewiseIntro} \emph{fails}. In particular, this paper provides an accessible explanation for the power spectrum laws via harmonic analysis,  mirroring the analysis in \cite{bedrossianBatchelorSpectrumPassive2020} at each step. 

\subsection{Statement of results}

In this paper we consider \emph{pulsed-diffusion} models, discrete-time analogues of the advection-diffusion equations \eqref{eq:PSAintro}. 
Let $f : \T^d \to \T^d$ be a volume preserving smooth diffeomorphism, which we will take to model the advection term in \eqref{eq:PSAintro} (c.f. (i) above). The \emph{transfer operator} $\Lc_f$ of $f$ is defined as follows: given a measurable function $\varphi : \T^d \to \R$, we define $\Lc_f [\varphi] : \T^d \to \R$ by
\[\Lc_f [\varphi](x) = \varphi \circ f^{-1}(x) \,. \]
Observe that if $f = \varphi^1$, the time-1 Lagrangian flow as in \eqref{eq:defnLagFlowIntro}, then $\Lc_f[g_0](x) = g(1, x)$ is the time-1 solution to \eqref{eq:PSAintro} with $\kappa = 0, S \equiv 0$. 

Advection and diffusion (c.f. (i) and (ii) above) will be modelled jointly by compositions of the operator
\[\Lc_{f, \kappa} := e^{\kappa \Delta} \circ \Lc_f \, , \]
where $\kappa > 0$ is a fixed diffusivity parameter; here, for $t > 0$ we write $e^{t \Delta}$ for the time-$t$ heat semigroup. 
Finally, the full passive scalar evolution $(g(t, \cdot))$ in \eqref{eq:PSAintro} will be modelled by the sequence $(g_n)$ of scalars $g_n : \T^d \to \C$ defined inductively via 
\begin{align}\label{eq:defnPulseDiffIntro}
    g_{n+1} = \Lc_{f, \kappa} [g_n] + S_{n + 1} \, ,
\end{align}
where for $n \geq 1$ we set the time-$n$ source term $S_n$ to be 
\[S_n = \omega_n b \, , \qquad b = e_{k_0} \,.  \]
Here, $(\omega_n)$ is an IID sequence of standard normal Gaussian random variables, and $k_0 \in \Z^d$ is a fixed wavenumber; the choice $b = e_{k_0}$ is taken for simplicity. 

The following prototypical example motivates much of what follows. 
\begin{example}[Arnold CAT map]
    Let $d = 2$ and 
    \[A = \begin{pmatrix}
    2 & 1 \\ 1 & 1
\end{pmatrix} \, . \]
Define $f_A : \T^2 \to \T^2$ as $f(x) = A x$ mod 1. Here, $x \in \T^2$ is parametrized by a vector in $[0,1)^2$, and $A x$ mod 1 refers to the vector in $[0,1)^2$ obtained by taking mod 1 separately of each coordinate of $A x$. The resulting map $f_A$ is smooth and preserves volume (since $\det A \equiv 1$). 
\end{example}

It is straightforward to check that the $(g_n)$ form a Markov chain on $L^2(\T^d)$, and that $(g_n)$ admits a unique stationary measure $\mu_{\kappa}$ supported in $L^2(\T^d)$ and distributed like 
    \begin{align} \label{eq:statLawIntro1}\sum_{n = 0}^\infty \eta_n \Lc_{f_A, \kappa}^n [e_{k_0}] = \eta_0 e_{k_0} + \sum_{n = 1}^\infty \eta_n e^{- \kappa C (|k_1|^2 + \dots + |k_n|^2)} e_{k_{n}} \, , \end{align}
    where $k_n := (A^{-T})^{n} k_0$, $C$ is an unimportant constant, and $(\eta_n)$ is a (separate) IID sequence of standard normal random variables.     

Observe that the matrix $A$ admits two distinct real eigenvalues $\lambda > 1 > \lambda^{-1}$ with corresponding eigenvectors $v^{u}$ and $v^{s}$, where $|v^{u/s}| = 1$. As one can check, at least one coordinate entry of each of $v^u, v^s$ is irrational, and so it follows that $v^u$ and $k_0 \in \Z^d$ cannot be collinear. Consequently, $|k_n| \approx \lambda^n$ at large $n$. 

The following is now immediate. Let $g_{\kappa}$ be distributed like $\mu_{\kappa}$ (i.e., a random function in $L^2(\T^d)$ with law given from the RHS of \eqref{eq:statLawIntro1}). 
\begin{itemize}
    \item[(A)]     An analogue of the cumulative law \eqref{eq:cumLawIntro} is valid for $g_\kappa$. 
    \item[(B)] The Fourier mass of $g_\kappa$ is supported in the sequence of exponentially-spaced modes $k_1, k_2, \dots$, and this sequence of modes accumulates onto the line $\Span\{v^s\}$, the span of the lower eigenvector $v^s$. In particular, the mode-wise law \eqref{eq:modewiseIntro} is badly false for $g_\kappa$ distributed like $\mu_\kappa$.
\end{itemize}

While the cumulative law \eqref{eq:cumLawIntro} is valid, paragraph (B) seems to cast doubt on the universality of the stronger mode-wise law, since the actual distribution of Fourier mass is much `sparser' than what is predicted by \eqref{eq:modewiseIntro}. On the other hand, the Arnold CAT map $f_A$ is extremely special, in that it transmits pure Fourier modes to pure Fourier modes: it is unrealistic to expect the Fourier matrix of a transfer operator $\Lc_{f_A}$ to be so sparse for a `general' system. 

Towards a broader understanding of the validity of \eqref{eq:modewiseIntro}, it is natural to consider \emph{to what extent the situation in paragraph} (B) \emph{persists under small changes to the map $f_A$}. This is the subject of the main results of this paper, simplified versions of which are stated below. 

Below, $f$ is a volume-preserving, $C^\infty$ perturbation of the CAT map $f_A$. 
The following versions of paragraph (A) is valid in this setting.
\begin{proposition}[Informal]\label{prop:cumLawPerturbedIntro}
    The following hold for all $f$ which are $C^\infty$ close to $f_A$. 
    \begin{itemize}
        \item[(i)] For any $\kappa > 0$ it holds that $(g_n)$ admits a unique stationary measure $\mu_{f, \kappa}$  (with $g_{f, \kappa}$ being the corresponding statistically stationary scalar) supported in $L^2(\T^2)$; 
        \item[(ii)] The cumulative law \eqref{eq:cumLawIntro} holds. 
    \end{itemize}
\end{proposition}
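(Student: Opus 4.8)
Part (i) is not special to perturbations of the CAT map: for \emph{any} smooth volume-preserving $f\colon\T^2\to\T^2$ the transfer operator $\Lc_f$ is a unitary operator on the subspace $L^2_0(\T^2)$ of mean-zero functions, while $e^{\kappa\Delta}$ acts there with operator norm $e^{-(2\pi)^2\kappa}=:r<1$; hence $\Lc_{f,\kappa}=e^{\kappa\Delta}\circ\Lc_f$ is a strict linear contraction, $\|\Lc_{f,\kappa}\|\le r<1$. The recursion \eqref{eq:defnPulseDiffIntro} is then a random affine recursion with uniformly contracting linear part, and I would invoke the standard theory of such recursions: the series $\sum_{n\ge0}\eta_n\,\Lc_{f,\kappa}^n[b]$ converges in $L^2(\mathbb{P};L^2_0)$ since $\sum_n\|\Lc_{f,\kappa}^n[b]\|_{L^2}^2\le\sum_n r^{2n}<\infty$, its law $\mu_{f,\kappa}$ is stationary for $(g_n)$, and uniqueness follows by coupling any two stationary solutions through a common forcing, so that their difference evolves under $\Lc_{f,\kappa}$ and tends to $0$. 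It is precisely here that $\kappa>0$ is used: when $\kappa=0$ the summands all have unit $L^2$-norm and no $L^2$-valued stationary measure exists.

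For part (ii), independence and unit variance of the $\eta_n$ give, exactly as in the derivation of \eqref{eq:statLawIntro1},
\[\E\,\big\|\Pi_{\le N}\,g_{f,\kappa}\big\|_{L^2}^2\;=\;\sum_{n\ge0}\big\|\Pi_{\le N}\,\Lc_{f,\kappa}^n[b]\big\|_{L^2}^2,\qquad b=e_{k_0},\]
so the task is to show the right side is comparable to $\log N$ for $1\ll N\ll\kappa^{-1/2}$. Since $f$ is $C^\infty$-close to the Anosov map $f_A$, structural stability makes $f$ Anosov as well, with hyperbolicity constants and expansion rate $\lambda_f$ close to those of $f_A$; write $\chi_f=\log\lambda_f$. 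The picture to be made rigorous is that the $L^2$-mass of $\Lc_{f,\kappa}^n[b]$ is carried coherently toward high wavenumbers, leaving the ball $\{|k|\le N\}$ after $\approx\chi_f^{-1}\log N$ steps and hence dwelling there for logarithmically many steps; diffusion stays inactive throughout, since $\kappa|k|^2\ll1$ for $|k|\lesssim N\ll\kappa^{-1/2}$.

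The lower bound $\gtrsim\log N$ is soft and needs no chaos: from $\|D^jf^{-n}\|_{C^0}\lesssim_j\lambda_f^{jn}$ and the chain rule one gets $\|\Lc_f^n[b]\|_{H^s}\lesssim_s(\lambda_f^n|k_0|)^s$, whence $\|\Pi_{>N}\Lc_f^n[b]\|_{L^2}^2\lesssim_s(\lambda_f^n|k_0|/N)^{2s}$; thus at least half of $\|\Lc_f^n[b]\|_{L^2}^2=1$ stays in $\{|k|\le N\}$ once $\lambda_f^n|k_0|$ is a small enough constant multiple of $N$, which holds for $\gtrsim\chi_f^{-1}\log N$ values of $n$, and for those $n$ the comparison $\|\Lc_{f,\kappa}^n[b]-\Lc_f^n[b]\|_{L^2}\lesssim\kappa\sum_{j\le n}\|\Lc_f^j[b]\|_{H^2}\lesssim\kappa\lambda_f^{2n}|k_0|^2\ll1$ transfers this to $\Lc_{f,\kappa}^n[b]$. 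The upper bound $\lesssim\log N$ is the substantive half and genuinely uses chaos (were $f$ an ergodic torus rotation, which creates no small scales, the sum would be $\asymp\kappa^{-1}$, not $\log N$). Here I would split at a suitable $n_\ast=C_f\log N$: for $n<n_\ast$ the trivial bound $\|\Pi_{\le N}\Lc_{f,\kappa}^n[b]\|_{L^2}^2\le\|b\|_{L^2}^2=1$ contributes $\lesssim\log N$, while for $n\ge n_\ast$ one needs $\kappa$-uniform exponential decay $\|\Pi_{\le N}\Lc_{f,\kappa}^n[b]\|_{L^2}^2\lesssim\mathrm{poly}(N,|k_0|)\,\gamma^{2n}$ at a $\kappa$-independent mixing rate $\gamma<1$, so that the geometric tail sums to $O(1)$. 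Combining the two bounds yields $\asymp\log N$.

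The crux — and the main obstacle — is this last $\kappa$-uniform decay estimate. For the unperturbed CAT map it is trivial, since $\Lc_{f_A}$ permutes Fourier modes; for $\Lc_f$ \emph{without} diffusion it follows from the exponential mixing of the Anosov map $f$ (decay of correlations, equivalently a transfer-operator spectral gap), which gives $|\langle\Lc_f^n[b],e_k\rangle|\lesssim\gamma^n|k_0|^r|k|^r$ for some $r$, after which one sums over $|k|\le N$. The difficulty is that the heat flow is interleaved with the advection, and it is \emph{not} literally true that diffusion only decreases $\|\Pi_{\le N}\Lc_{f,\kappa}^n[b]\|$; one must instead show that mixing beats the diffusive contamination — that is, prove a pulsed-diffusion enhanced-dissipation estimate in which the mixing rate of $f$ survives uniformly in $\kappa$ for data at wavenumbers $\ll\kappa^{-1/2}$ — e.g.\ via a Duhamel expansion that tracks when mass exceeds the dissipative scale $\kappa^{-1/2}$, or by adapting the transfer-operator methods directly. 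This is the discrete-time analogue of the input underlying \cite{bedrossianAlmostsureEnhancedDissipation2021,bedrossianBatchelorSpectrumPassive2020}.
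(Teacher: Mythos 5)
Your part (i) is exactly the paper's argument (strict contraction of $\Lc_{f,\kappa}$ on mean-zero $L^2$, convergence of the random series, uniqueness by coupling through a common forcing), and your lower bound for part (ii) matches the paper's as well: the paper runs the same Bernstein estimate on $\Pi_{>N}\Lc_{f,\kappa}^n b$ using $\|\Lc_{f,\kappa}\|_{H^1}\le e^{\Lambda}$ with $\Lambda=\log\sup_x|(Df_x)^{-1}|$, together with an energy estimate $\|\Lc_{f,\kappa}^n b\|^2\ge \|b\|^2-C\kappa e^{2n\Lambda}$ playing the role of your comparison between $\Lc_{f,\kappa}^n b$ and $\Lc_f^n b$ (Lemmas \ref{lem:identityForProjection} and \ref{lem:enhancedDissipLower2}).

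The genuine gap is the step you yourself flag as ``the crux and the main obstacle'': the $\kappa$-uniform exponential decay $\|\Pi_{\le N}\Lc_{f,\kappa}^n b\|^2\lesssim \mathrm{poly}(N)\,e^{-2\gamma n}$ with $\gamma$ independent of $\kappa$. You describe what would be needed and gesture at two possible methods (a Duhamel expansion, or transfer-operator techniques), but you do not carry either out, and it cannot be imported from \cite{bedrossianAlmostsureEnhancedDissipation2021,bedrossianBatchelorSpectrumPassive2020}, which treat stochastically forced velocity fields rather than deterministic perturbations of $f_A$. This is where essentially all of the paper's work on this proposition lives. The paper first isolates the needed input as ``$\kappa$-uniform $H^{-1}$ decay'' (Definition \ref{defn:kappaUnifH-1Decay2}) and shows in Theorem \ref{thm:cumLaw2} that it yields the cumulative law upper bound via exactly your splitting at $n_L\approx\frac{2}{\gamma}\log N$. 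It then proves the input in two stages: Proposition \ref{prop:suffCondH-kMixing} derives $\kappa$-uniform decay from a spectral gap of $\Lc_f$ on a norm sandwiched between $H^{-q}$ and $H^{q}$, precisely by the Duhamel expansion $\Lcfk^n=\Lc_f^n+\sum_i\Lcfk^{i}(\Lcfk-\Lc_f)\Lc_f^{n-i-1}$ combined with $\|e^{\kappa\Delta}-I\|_{H^{q}\to H^{-q}}\lesssim\sqrt{\kappa}$; and Theorem \ref{thm:specPic3} establishes that spectral gap, uniformly over $d_{C^\infty}(f,f_A)<\epsilon$ and $\kappa\in[0,\kappa_0]$, on the anisotropic space $\Hc^p$ via the uniform Lasota--Yorke inequality (Proposition \ref{prop:uniformLasotaYorke3}, Appendix \ref{Jezequel}) and Keller--Liverani spectral stability. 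Note in particular that plain decay of correlations for the Anosov map $f$, which you invoke for the $\kappa=0$ case, is not by itself enough: the whole point of the anisotropic-space construction is to get a gap on a space compatible with Sobolev norms so that the interleaved heat semigroup can be treated as a small perturbation uniformly in $\kappa$. Without supplying this ingredient, the upper bound in (ii) — the substantive half of the proposition — remains unproven.
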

A general sufficient condition for Batchelor's law in the pulsed-diffusion setting is given in Section \ref{sec:background}, essentially following \cite{bedrossianBatchelorSpectrumPassive2020}, while Proposition \ref{prop:cumLawPerturbedIntro} specifically for peturbations of the CAT map follows from the application of this criterion - see Section \ref{subsec:catMap3} for details. 

We now turn to the failure of the mode-wise law as in paragraph (B). We begin by addressing the Fourier support of $g_{f, \kappa}$ away from $\Span\{ v^s\}$. 

\begin{theorem}[Informal]\label{thm:sectorialIntro}
    Let $p>0$. There exists $\vartheta \in (0,\pi/2)$ such that for all $f$ $C^\infty$-close to $f_A$ and for all $\kappa>0$ sufficiently small, there holds
 
    \[\angle(k, \Span\{v^s\}) \geq \vartheta \quad \text{ implies } \quad \E |\hat g_{a,\kappa}(k)|^2 \lesssim \frac{1}{|k|^{p}}
    \]
  for all wavenumbers $k$. 

\end{theorem}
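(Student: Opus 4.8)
The plan is to track, for each Fourier mode $k$, the contribution of the forcing increments $\eta_n \Lc^n_{f,\kappa}[b]$ and to show that these contributions decay super-polynomially in $|k|$ once $k$ sits in a fixed sector bounded away from the stable direction $\Span\{v^s\}$. The starting point is the stationary representation, whose perturbed analogue is $g_{f,\kappa} \stackrel{d}{=} \sum_{n\geq 0} \eta_n \Lc^n_{f,\kappa}[b]$ with $(\eta_n)$ i.i.d.\ standard Gaussian. By independence, $\E|\hat g_{f,\kappa}(k)|^2 = \sum_{n\geq 0} \bigl|\Fc[\Lc^n_{f,\kappa}b](k)\bigr|^2$, so the whole statement reduces to a deterministic estimate: control $\sum_n |\Fc[\Lc^n_{f,\kappa}b](k)|^2$ uniformly for $k$ in the sector. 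Writing $\Lc^n_{f,\kappa} = e^{\kappa\Delta}\Lc_f e^{\kappa\Delta}\Lc_f \cdots$, one splits the sum at a scale $n_* = n_*(k) \approx \log|k|$: for $n \leq n_*$ the composed map $f^n$ has bounded distortion on the relevant scales and one shows $\Fc[\Lc^n_{f,\kappa}b](k)$ is already tiny because a low mode $k_0$ cannot be pushed by $f^n$ (a bounded-complexity diffeomorphism) onto a high mode $k$ lying in the unstable-ish sector without paying a price; for $n \geq n_*$ the diffusion factor $e^{\kappa\Delta}$ has been applied enough times that $e^{-c\kappa|k|^2 \lambda^{2(n-n_*)}}$-type bounds kill the tail, exactly as in the unperturbed computation \eqref{eq:statLawIntro1}.

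The mechanism making the sectorial estimate work is hyperbolicity: since $f$ is $C^\infty$-close to the CAT map $f_A$, the transfer operator $\Lc_f$ (equivalently $\Lc_{f^{-1}}$ acting on the frequency side) expands covectors along $\Span\{v^u\}$ and contracts along $\Span\{v^s\}$. Thus to produce Fourier content at a mode $k$ with $\angle(k,\Span\{v^s\}) \geq \vartheta$ — i.e.\ $k$ has a definite component along $v^u$ — one must go `backwards', and the preimages $(D f^{-n})^T$-transported frequencies of $k$ grow like $\lambda^n$; the smooth forcing bump $b = e_{k_0}$ has Fourier content only near $k_0$, and the non-resonance of these transported frequencies with the fixed low mode $k_0$, combined with the rapid (faster than any polynomial) decay of the Fourier transform of the smooth pushed-forward bump $\Lc^n_f b$, gives $|\Fc[\Lc^n_f b](k)| \lesssim_N (\lambda^n + |k|)^{-N}$ for every $N$ and every $n$. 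Summing a geometric-type series in $n$ against this bound, and absorbing the diffusion factors (which only help), yields $\E|\hat g_{f,\kappa}(k)|^2 \lesssim_p |k|^{-p}$ for any prescribed $p$, with the sector half-angle $\vartheta$ depending only on $p$ and the (fixed) hyperbolicity constants of $f$ near $f_A$. The uniformity in small $\kappa$ is immediate because diffusion contributes multiplicative factors in $[0,1]$.

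The main obstacle is making precise the claim that $\Lc_f^n b$ — the pushforward of a smooth bump under the $n$-fold composition $f^n$, a diffeomorphism whose $C^r$ norms grow like $\lambda^{rn}$ — still has rapidly decaying Fourier transform at frequencies $k$ in the sector, \emph{with constants that are summable over $n$}. A naive stationary-phase or integration-by-parts bound loses a factor $\lambda^{rn}$ per derivative, which would swamp the $|k|^{-N}$ gain when $n \gtrsim \log|k|$. The fix is to do integration by parts in a coordinate frame adapted to the (un)stable foliation of $f^n$: along the unstable direction the relevant frequency $(Df^{-n})^T k$ is of size $\gtrsim \lambda^n \gtrsim |k|$ precisely because of the sector hypothesis, so each integration by parts there gains a genuine factor $(\lambda^n|k|^{?})^{-1}$ that beats the derivative cost, and one never needs to differentiate in the bad (contracting) direction. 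This is the step where one genuinely uses that $f$ is a small perturbation — so that its unstable/stable cone fields, and the distortion control on $f^n$ restricted to unstable leaves, are uniformly close to the linear ones for $f_A$ — and it is essentially the perturbative analogue of the exact computation available for the CAT map. Once this `Fourier decay of pushed-forward bumps in the hyperbolic sector' lemma is in hand, the rest is summation and the harmless diffusion bookkeeping.
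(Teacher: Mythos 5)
Your starting point is the same as the paper's: by the stationary representation and independence of the $(\eta_n)$ (Lemma \ref{lem:identityForProjection}), everything reduces to bounding $\sum_{n}|\Fc[\Lc^n_{f,\kappa}b](k)|^2$ for $k$ in a fixed cone transverse to $\Span\{v^s\}$, and the mechanism you invoke --- the transported frequency $(D_xF^n)^Tk_0$ grows like $\lambda^n$ inside the stable cone and is therefore uniformly transverse to any $k$ in the unstable cone, making the phase of $\int e^{2\pi i(k_0\cdot F^n(x)-k\cdot x)}\,dx$ non-stationary with gradient $\gtrsim\max(\lambda^n,|k|)$ --- is exactly the content of the paper's Claim \ref{cla:suffFastPhaseAPP}. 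But the load-bearing step of your argument is the claimed lemma $|\Fc[\Lc^n_fb](k)|\lesssim_N(\lambda^n+|k|)^{-N}$ \emph{with constants summable over $n$}, and this is a genuine gap. You correctly identify the obstruction (each integration by parts costs derivatives of $F^n$, which grow like $\lambda^{n}$ per order and swamp the gain when $n\gtrsim\log_\lambda|k|$), but the proposed fix --- integrating by parts only along the unstable foliation --- is asserted, not carried out, and it is precisely the step that does not obviously close: the operator $L^*=-\tfrac{1}{2\pi i}\nabla\cdot(\nabla G/|\nabla G|^2)$ unavoidably involves $D^2F^n$, and bounded distortion along unstable leaves controls ratios of derivatives, not the absolute size of the second-derivative terms relative to $|\nabla G|^{2}$ uniformly in $n$. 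The paper does not attempt an $n$-uniform oscillatory-integral bound at all: its non-stationary phase estimate (Lemma \ref{lem:nonstatPhaseApp}) has constants $C_{n,\beta}$ that \emph{blow up with $n$}, and these are absorbed into the weak-norm term of a Lasota--Yorke inequality on the anisotropic space $\Hc^p$; the actual decay in $n$ then comes from quasicompactness, the spectral gap of $\Lc_{f_A}$ on $\Hc^p$ (Lemma \ref{lem:specPicCAT3}), and Keller--Liverani stability, yielding $\|\Lc^n_{f,\kappa}b\|_{\Hc^p}\le Kr^n$ and hence $|k|^{2p}|\Fc[\Lc^n_{f,\kappa}b](k)|^2\le K^2r^{2n}\|b\|^2_{\Hc^p}$, after which the theorem is a geometric series. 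In other words, the anisotropic-space machinery exists precisely to avoid the uniform-in-$n$ estimate your proof requires.

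A second, related problem is your treatment of the tail $n\ge n_*(k)$. You propose to kill it with accumulated diffusion factors ``exactly as in the unperturbed computation,'' but the estimate must be uniform as $\kappa\to0$, and for $|k|\ll\kappa^{-1/2}$ the factors $e^{-c\kappa(\cdot)^2}$ are $O(1)$ at frequencies comparable to $|k|$ for all $n$ up to the enhanced-dissipation timescale $\sim|\log\kappa|$; moreover for the perturbed map the pulse is not a single Fourier mode, so there is no clean $e^{-c\kappa|k_j|^2}$ bookkeeping. The decay of the summands in $n$ for $k$ in the sector has to come from the dynamics (exponential mixing in a norm that penalizes the sector), not from $e^{\kappa\Delta}$; this is again what the spectral-gap estimate in $\Hc^p$ supplies uniformly over all $n$ and all $\kappa\in[0,\kappa_0]$ (Theorem \ref{thm:specPic3}(d)). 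As written, your argument leaves both the key lemma and the tail unproved.
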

Theorem \ref{thm:sectorialIntro} is proved in Section \ref{sec:sectorial} using the technique of anisotropic spaces, a tool used for studying the spectral theory of transfer operators; see citations in Section \ref{subsec:references}. 

Taking $p > 2$ implies a violation of the `mode-wise' law \eqref{eq:modewiseIntro} for all wavenumbers $k$ sufficiently far from $v^s$. 
On the other hand, the `sector-wise' sparsity of Fourier mass predicted in Theorem \ref{thm:sectorialIntro} is compatible, at least, with a version of \eqref{eq:modewiseIntro} averaged over radial shells, e.g., 
\begin{align}\label{eq:radialLaw}
    \sum_{|k| \in [R-2, R + 2]} |\hat g(k)|^2 \approx \frac{1}{R} \quad \text{ for all } \quad 1 \ll R \ll \kappa^{-1/2} 
\end{align}
Analogues of the above relation are, indeed, often what is measured in  experiments and simulations in physics literature -- see references in Section \ref{subsec:references} below. 

Our next results address the breakdown of \eqref{eq:radialLaw} when $f$ is close to the CAT map $f_A$ by estimating, directly, the localization of Fourier mass to the sequence of `pulses' centered on $k_n := (A^{-T})^n k_0$. 

\begin{theorem}[Informal]\label{thm:pulseLocalizationIntro}
    Let $\epsilon, \kappa > 0$ be sufficiently small. Then, if $d_{C^\infty}(f, f_A) < \epsilon$, it holds that 
    \begin{gather}\label{eq:pulseControlIntro}
    \E \sum_{\substack{k \notin \{ k_\ell\} \\ |k| \leq R_{\rm crit}}} |\hat g_{f, \kappa}(k)|^2  \lesssim \left( \max\{ \kappa, \epsilon\} \right)^{\delta} \, , 
    \end{gather}
    where
    \[R_{\rm crit} := \big(\max \{ \kappa, \epsilon\} \big)^{-\zeta} \,. \]
  \end{theorem}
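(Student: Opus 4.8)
The plan is to track, mode by mode, how the ``pulse'' of Fourier mass that the source deposits at $k_0$ disperses under the perturbed transfer operator $\Lc_{f,\kappa}$, comparing against the unperturbed CAT dynamics. Since $\mu_{f,\kappa}$ is distributed like $\sum_{n \geq 0} \eta_n \Lc_{f,\kappa}^n[e_{k_0}]$ (the perturbed analogue of \eqref{eq:statLawIntro1}, valid by Proposition \ref{prop:cumLawPerturbedIntro}(i)), independence of the $\eta_n$ gives
\begin{align}\label{eq:decompProposal}
\E \sum_{\substack{k \notin \{k_\ell\} \\ |k| \leq R_{\rm crit}}} |\hat g_{f,\kappa}(k)|^2 = \sum_{n \geq 0} \sum_{\substack{k \notin \{k_\ell\} \\ |k| \leq R_{\rm crit}}} \big| \Fc[\Lc_{f,\kappa}^n e_{k_0}](k) \big|^2 \, .
\end{align}
So the task reduces to a deterministic one: show that for each $n$, the function $\Lc_{f,\kappa}^n e_{k_0}$ has almost all of its $L^2$ mass concentrated on the single mode $k_n = (A^{-T})^n k_0$, with the off-$\{k_\ell\}$ leakage below wavenumber $R_{\rm crit}$ summing to $(\max\{\kappa,\epsilon\})^\delta$.

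First I would handle the ``early'' times $n \leq N_* := c \log(1/\max\{\kappa,\epsilon\})$ for a small constant $c$. Here the key is a perturbative bound: writing $\Lc_f = \Lc_{f_A} \circ (\Id + E_f)$ where $E_f$ is an operator of size $O(\epsilon)$ in a suitable norm (exploiting that $f$ is $C^\infty$-close to $f_A$, so $f \circ f_A^{-1}$ is $C^\infty$-close to the identity), one expands $\Lc_{f,\kappa}^n e_{k_0}$ around the pure CAT orbit. Because $\Lc_{f_A}$ maps $e_j \mapsto e_{(A^{-T})j}$ exactly, the unperturbed term contributes only to the mode $k_n$; each insertion of $E_f$ costs a factor $O(\epsilon)$ but spreads mass to nearby modes. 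One must be careful that the heat semigroup $e^{\kappa\Delta}$ does not spoil the expansion — in fact it only helps, contributing decaying factors $e^{-\kappa C|k|^2}$. Summing the geometric-in-$\epsilon$ series over $n \leq N_*$ yields a bound $O(\epsilon^{\delta'})$ on this part of \eqref{eq:decompProposal}, provided $N_* \epsilon \ll 1$, which is exactly the choice of $N_*$; here one should choose the exponents $\zeta$ and $\delta$ in the statement so that $R_{\rm crit} \approx \lambda^{N_*}$, i.e. $\zeta$ small relative to $c \log \lambda$.

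For the ``late'' times $n > N_*$, the point is simply that $|k_n| \gtrsim \lambda^n > \lambda^{N_*} \gtrsim R_{\rm crit}$, so the main pulse has already left the ball $\{|k| \leq R_{\rm crit}\}$; what remains to control is the \emph{low-wavenumber tail} of $\Lc_{f,\kappa}^n e_{k_0}$, i.e. mass that the perturbation has scattered backwards into low modes. I would control this using either (i) the anisotropic/Lasota–Yorke machinery behind Theorem \ref{thm:sectorialIntro}, which already shows that Fourier mass at wavenumber $k$ with $|k| \leq R_{\rm crit}$ decays like a large power of $|k|$ after sufficiently many steps — combined with a crude bound $\|\Lc_{f,\kappa}^n e_{k_0}\|_{L^2} \leq 1$ for the remaining steps — or (ii) a direct argument that the total mass in $\{|k| \leq R_{\rm crit}\}$ contracts by a definite factor each step once $n \geq N_*$, using hyperbolicity of $A$ to push mass outward faster than the $O(\epsilon)$ perturbation can return it. Either way the contributions sum geometrically and produce a bound $O\big((\max\{\kappa,\epsilon\})^\delta\big)$, absorbing the heat factor $e^{-\kappa C |k|^2} \leq 1$ for free.

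The main obstacle, I expect, is the late-time tail estimate: bounding \emph{back-scattering} of Fourier mass into low modes uniformly over all $n > N_*$. The perturbation $E_f$ is not small in any norm that sees high frequencies (a $C^\infty$-small diffeomorphism can still badly distort very fine Fourier modes), so a naive perturbative expansion diverges for large $n$, and one genuinely needs the spectral-gap / anisotropic-space technology to close the argument; reconciling the anisotropic norms (adapted to the stable/unstable cone structure of $f_A$, which survives perturbation by structural stability) with the isotropic $\ell^2$-on-$\Z^d$ quantity in \eqref{eq:decompProposal} — in particular translating ``exponential decay in the anisotropic norm'' into ``polynomial decay of individual Fourier coefficients in the ball of radius $R_{\rm crit}$'' — is the delicate technical heart of the proof, and is presumably where the precise relationship between $\zeta$, $\delta$, and the spectral gap gets pinned down.
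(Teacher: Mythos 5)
Your decomposition and early/late split are the same as the paper's: you begin from Lemma~\ref{lem:identityForProjection} (your equation~\eqref{eq:decompProposal} is precisely the representation $\sum_n \|\Pi^\perp \Pi_{\le R_{\rm crit}}\varphi_n\|^2$ with $\varphi_n = \Lc_{f,\kappa}^n e_{k_0}$), and you cut at a critical timescale $N_* \approx c\,|\log\max\{\kappa,\epsilon\}|$. For late times $n>N_*$, your option~(i) is essentially what the paper does, but the paper's version is simpler than you anticipate: it does not track mode-wise polynomial decay at all. It just invokes $\kappa$-uniform $H^{-1}$-decay (a consequence of Corollary~\ref{cor:pertCATCumBatchelor3}, itself resting on the anisotropic machinery) together with the crude Bernstein bound $\|\Pi_{\le R_{\rm crit}}\varphi_n\|^2 \le R_{\rm crit}^2\|\varphi_n\|_{H^{-1}}^2 \lesssim R_{\rm crit}^2 e^{-2\gamma n}$, then sums the geometric series. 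No reconciliation of anisotropic and isotropic norms beyond this one line is needed for that regime.

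The genuine divergence is at early times, and here you miss the paper's key device. You propose a Dyson-type expansion $\Lc_f = \Lc_{f_A}(\Id + E_f)$ and a geometric series in $\epsilon$; this is not obviously hopeless, but it faces two concrete difficulties you acknowledge only in passing. First, $E_f$ is only small as an operator that loses derivatives (roughly $O(\epsilon)$ from $H^s \to H^{s-1}$, as in the estimate $\|\Lc_{f_1}\psi - \Lc_{f_2}\psi\|_{H^s}\lesssim d_{C^{\lceil|s|\rceil}}(f_1^{-1},f_2^{-1})\|\psi\|_{H^{s+1}}$ used in Section~\ref{subsubsec:completeProof3}), so a naive geometric series does not close; you would have to exploit that $e_{k_0}$ is a single mode and carefully account for the growth $|k_n|\sim\lambda^n$ in the derivative cost. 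Second, after an insertion of $E_f$ the function is no longer a pure mode, and each subsequent application of $\Lc_{f_A}$ acts as the linear map $A^{-T}$ on the support in wavenumber space, stretching the spread anisotropically; quantifying the resulting off-$\{k_\ell\}$ leakage over $n\sim|\log\eta|$ steps this way would be awkward. The paper sidesteps all of this with the method of spectral distributions: treat $\PP_{\varphi_n}(\{k\}) = |\hat\varphi_n(k)|^2/\|\varphi_n\|^2$ as a probability measure on $\Z^2$, establish one-step recursions for the spectral centroid $\E X_{\varphi_n}$ and spectral variance $\Var(X_{\varphi_n})$ (Lemmas~\ref{lem:centroidEstTimeOne4} and~\ref{lem:varianceTimeOne4}), iterate to get $\E X_{\varphi_n} = k_n + O(\eta M^n)$ and $\Var(X_{\varphi_n})\lesssim\eta M^n$ (Proposition~\ref{prop:mainVarCentEstimates4}), and then apply Chebyshev's inequality to bound $\sum_{k\ne k_n}|\hat\varphi_n(k)|^2$. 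This is where the precise relation between $\delta$, $\zeta$, the constant $M$, and the decay rate $\gamma$ is pinned down ($N_{\rm crit} = (1-\delta)|\log\eta|/\log M$, then $\zeta$ chosen so the late-time tail is also $\lesssim\eta^\delta$). Your proposal has the right skeleton and correctly diagnoses that the late-time back-scattering is controlled by the spectral-gap technology, but it does not supply a workable substitute for the centroid/variance bookkeeping, which is the actual content of the early-time estimate.
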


  From the cumulative law estimate \[ \E \| \Pi_{\leq R_{\rm crit}} g_{f, \kappa}(k)\|^2_{L^2} \approx \log R_{\rm crit} \gg 1 \] from Theorem \ref{thm:cumLaw2} and the exponential spacing of the `pulse' wavenumbers $|k_n| \approx \lambda^n$, it follows that equation \eqref{eq:pulseControlIntro} is badly incompatible with any version of the radial law \eqref{eq:radialLaw} summing over fixed-width bands of wavenumbers. This control is only valid, however, for wavenumbers below the critical threshold $R_{\rm crit}$. For further discussion, see Remark \ref{rmk:criticalTimescale4}. 
  
  Our last result, stated below, affirms that a version of the radial law is valid when summing over exponentially-growing bands of radii. 

  \begin{theorem}[Informal]\label{thm:expShellLawIntro}
    Let $\epsilon, \kappa > 0$ be sufficiently small and assume $d_{C^\infty}(f, f_A) < \epsilon$. Let $L > 1$. Then, 
    
    \begin{align}\label{eq:exponentialShellEstIntro}
       \E \| \Pi_{[L^\ell, L^{\ell + 1}]} g_{f, \kappa}\|^2_{L^2} = \frac{\log L}{\log \lambda} + O(1) 
    \end{align}
    for all $\ell \leq \ell_{\rm crit}$, where 
    \[\ell_{\rm crit} \approx \frac{1}{\log L} | \log \max\{ \epsilon, \kappa\}| \]
    Here, $O(1)$ refers to a constant term independent of $\ell, L, \kappa, \epsilon$, and $\Pi_{[L^\ell, L^{\ell + 1}]}$ to the projection onto Fourier modes $k$ with $L^\ell \leq |k| \leq L^{\ell + 1}$. 
  \end{theorem}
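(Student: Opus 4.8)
\emph{Proof strategy.} Write $b=e_{k_0}$. As in the discussion around \eqref{eq:statLawIntro1} (see also Proposition~\ref{prop:cumLawPerturbedIntro}(i)), the statistically stationary scalar is distributed as the $L^2(\T^2)$-convergent series $g_{f,\kappa} \stackrel{d}{=} \sum_{n\ge 0}\eta_n\,\Lc_{f,\kappa}^n[b]$ with $(\eta_n)$ i.i.d.\ standard Gaussian, the series converging because $e^{\kappa\Delta}$ damps the $n$-th term superexponentially. Since the $\eta_n$ are independent and mean zero, for any set $J\subseteq\Z^2$ of wavenumbers one has the exact identity
\[
\E\,\|\Pi_J g_{f,\kappa}\|_{L^2}^2 \;=\; \sum_{n\ge 0}\|\Pi_J\Lc_{f,\kappa}^n[b]\|_{L^2}^2 \,.
\]
The plan is to apply this with $J=J_\ell:=\{k\in\Z^2: L^\ell\le|k|\le L^{\ell+1}\}$ and to show that, for $\ell\le\ell_{\rm crit}$, the sum is dominated by the indices $n$ whose pulse wavenumber $k_n$ lies in $J_\ell$, each contributing $1-o(1)$; the number of such $n$ is $\frac{\log L}{\log\lambda}+O(1)$, and all remaining contributions are $O(1)$ uniformly in $\ell,L,\kappa,\epsilon$.

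\emph{Step 1 (structure of a single pulse, for $|k_n|\le R_{\rm crit}$).} First I would record two facts about $\Lc_{f,\kappa}^n[b]$. By (the proof of) Theorem~\ref{thm:pulseLocalizationIntro}, the Fourier mass of $\Lc_{f,\kappa}^n[b]$ is concentrated on a neighborhood $B_n$ of $k_n$ of radius $r_n=o(|k_n|)$, the $\{B_n\}$ being pairwise disjoint, with the ``off-pulse'' mass summable over $n$: $\sum_n\big(\|\Lc_{f,\kappa}^n[b]\|_{L^2}^2-\|\Pi_{B_n}\Lc_{f,\kappa}^n[b]\|_{L^2}^2\big)\lesssim\max\{\kappa,\epsilon\}^{\delta}$. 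Second, since $\Lc_f$ is an $L^2$-isometry and each $e^{\kappa\Delta}$ multiplies $\hat{(\cdot)}(k)$ by $e^{-\kappa C|k|^2}$, with the relevant local wavenumbers at step $j$ being $\Theta(\lambda^j)$ (because $f$ is $C^1$-close to $f_A$ and $n\lesssim|\log\max\{\kappa,\epsilon\}|$), one gets
\[
\|\Lc_{f,\kappa}^n[b]\|_{L^2}^2 \;=\; 1-O\!\big(\kappa\,{\textstyle\sum_{j=1}^n\lambda^{2j}}\big) \;=\; 1-O(\kappa\lambda^{2n}) \;=\; 1-O(\kappa R_{\rm crit}^2)\,,
\]
using that the geometric sum is dominated by its last term and $|k_n|\le R_{\rm crit}$; here $\kappa R_{\rm crit}^2=\kappa\max\{\kappa,\epsilon\}^{-2\zeta}$ is bounded by an absolute constant tending to $0$ as $\kappa,\epsilon\to0$, uniformly in $n,\ell,L$. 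Also $\|\Lc_{f,\kappa}^n[b]\|_{L^2}\le 1$ for every $n$, since $e^{\kappa\Delta}$ is an $L^2$-contraction.

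\emph{Step 2 (counting pulses and boundary effects).} For $f=f_A$ one has $k_n=(A^{-1})^nk_0$, and since $k_0$ has nonzero component along the $\lambda$-eigendirection $\Span\{v^s\}$ of $A^{-1}$ (the entries of $v^s$ being irrational), $|k_n|=a\lambda^n(1+O(\lambda^{-2n}))$ for a constant $a=a(k_0)>0$; for $f$ merely $C^1$-close to $f_A$ one still has $\log_\lambda|k_n|=n+O(1)$ uniformly in $n$. Hence
\[
\#\{\,n:|k_n|\in J_\ell\,\} \;=\; \#\big\{\,n: \ell\log_\lambda L+O(1)\le n\le(\ell+1)\log_\lambda L+O(1)\,\big\} \;=\; \tfrac{\log L}{\log\lambda}+O(1)\,,
\]
with $O(1)$ an absolute constant (the number of integers in an interval of length $\log_\lambda L$). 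A neighborhood $B_n$ meets $\partial J_\ell$ without being contained in $J_\ell$ only when $|k_n|$ lies within a factor $1+o(1)$ of $L^\ell$ or $L^{\ell+1}$; as consecutive $|k_n|$ differ multiplicatively by $\lambda+o(1)>1$, there are only $O(1)$ such ``straddling'' indices, each contributing at most $\|\Lc_{f,\kappa}^n[b]\|^2\le 1$. Finally, for $\ell\le\ell_{\rm crit}$ (i.e.\ $L^{\ell+1}\lesssim R_{\rm crit}$, which is precisely the regime $\ell_{\rm crit}\approx\frac1{\log L}|\log\max\{\epsilon,\kappa\}|$) every non-straddling $B_n$ meeting $J_\ell$ has $|k_n|\le R_{\rm crit}$, so Step~1 applies; pulses with $|k_n|\gg R_{\rm crit}$ have $\|\Lc_{f,\kappa}^n[b]\|^2$ negligible by heat damping anyway.

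\emph{Step 3 (assembly, and the main obstacle).} Decomposing $\sum_n\|\Pi_{J_\ell}\Lc_{f,\kappa}^n[b]\|^2$ into the mass inside neighborhoods $B_n\subseteq J_\ell$, the mass of the $O(1)$ straddling pulses, and the mass lying outside all $B_n$, and using Steps~1--2,
\[
\E\,\|\Pi_{J_\ell}g_{f,\kappa}\|_{L^2}^2 \;=\; \sum_{n:\,B_n\subseteq J_\ell}\|\Lc_{f,\kappa}^n[b]\|_{L^2}^2 \;+\; O(1) \;+\; O\!\big(\max\{\kappa,\epsilon\}^{\delta}\big) \;=\; \tfrac{\log L}{\log\lambda}+O(1)\,,
\]
since $\#\{n:B_n\subseteq J_\ell\}=\frac{\log L}{\log\lambda}+O(1)$ and the per-pulse diffusive loss, summed over the relevant indices, is still $\lesssim\kappa\sum_{n:\,|k_n|\le R_{\rm crit}}\lambda^{2n}\lesssim\kappa R_{\rm crit}^2=o(1)$, an absolute constant. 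Thus the error is independent of $\ell,L,\kappa,\epsilon$. The main obstacle is the first assertion of Step~1: one must know not merely that the Fourier mass of $g_{f,\kappa}$ sits on the pulse neighborhoods, but that each individual packet $\Lc_{f,\kappa}^n[b]$ is concentrated in $B_n$ with $r_n=o(|k_n|)$ \emph{uniformly over} $n\lesssim|\log\max\{\epsilon,\kappa\}|$, so that distinct pulses occupy distinct shells and the boundary mismatch is only $O(1)$ per shell; this quantitative WKB-type localization is exactly what underlies Theorem~\ref{thm:pulseLocalizationIntro}, and the restriction $\ell\le\ell_{\rm crit}$ (equivalently $L^{\ell+1}\lesssim R_{\rm crit}$) is what keeps both the diffusive loss $\kappa R_{\rm crit}^2$ and the perturbative spreading $r_n/|k_n|$ negligible. (Alternatively, once the sharpened cumulative estimate $\E\|\Pi_{\le N}g_{f,\kappa}\|_{L^2}^2=\frac{\log N}{\log\lambda}+O(1)$ for $1\le N\lesssim R_{\rm crit}$ is in hand --- which the analysis above also yields, refining Theorem~\ref{thm:cumLaw2} --- the shell estimate follows in one line from orthogonality, $\|\Pi_{J_\ell}g\|^2=\|\Pi_{\le L^{\ell+1}}g\|^2-\|\Pi_{<L^\ell}g\|^2$, whose expectation is $\frac{\log L^{\ell+1}}{\log\lambda}-\frac{\log L^\ell}{\log\lambda}+O(1)$.)
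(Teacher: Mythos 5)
Your architecture matches the paper's proof of Theorem \ref{thm:expShellLaw4} almost exactly: the orthogonality identity $\E\|\Pi_{J_\ell}g_{f,\kappa}\|^2=\sum_n\|\Pi_{J_\ell}\varphi_n\|^2$, the count $\#\{n:|k_n|\in J_\ell\}=\frac{\log L}{\log\lambda}+O(1)$, a Chebyshev-type localization of each packet $\varphi_n$ near $k_n$ for $n\le N_{\rm crit}$ (the paper does this with the spectral-variance bound $\Var(X_{\varphi_n})\lesssim\eta M^n$ of Proposition \ref{prop:mainVarCentEstimates4}, which localizes the packet to within distance $1$ of $k_n$ --- sharper than your neighborhoods $B_n$ of radius $o(|k_n|)$, and it makes the straddling count trivially $O(1)$), and the energy estimate $\|\varphi_n\|^2\ge 1-O(\eta M^n)$ for the lower bound.

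The one step that would fail as written is your dismissal of the tail: ``pulses with $|k_n|\gg R_{\rm crit}$ have $\|\Lc_{f,\kappa}^n[b]\|^2$ negligible by heat damping anyway.'' For $n>N_{\rm crit}$ the packet is no longer localized, and in the regime $\epsilon\gg\kappa$ the critical time $N_{\rm crit}\approx|\log\epsilon|/\log M$ can be far smaller than the enhanced-dissipation timescale $\sim|\log\kappa|$, so for many $n>N_{\rm crit}$ one still has $\|\varphi_n\|^2$ close to $1$; since you have lost control of where that mass sits, nothing prevents it from landing in $J_\ell$. What is actually needed is $\sum_{n>N_{\rm crit}}\|\Pi_{J_\ell}\varphi_n\|^2=O(1)$ uniformly in $L$ and $\ell$, and the paper obtains this from Bernstein's inequality combined with $\kappa$-uniform $H^{-1}$ decay (Corollary \ref{cor:pertCATCumBatchelor3}): $\|\Pi_{J_\ell}\varphi_n\|^2\le L^{2(\ell+1)}\|\varphi_n\|_{H^{-1}}^2\lesssim L^{2\ell}e^{-2n\gamma}$. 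This is precisely where the value $\ell_{\rm crit}=N_{\rm crit}\gamma/\log L$ comes from: it is the largest $\ell$ for which $L^{2\ell}e^{-2N_{\rm crit}\gamma}=O(1)$, a constraint your write-up never actually uses. Your parenthetical ``one-line alternative'' via a sharpened cumulative law $\E\|\Pi_{\le N}g_{f,\kappa}\|^2=\frac{\log N}{\log\lambda}+O(1)$ would indeed suffice, but that sharpened law is not available short of the same pulse analysis (Theorem \ref{thm:cumLaw2} only gives the mismatched constants $\frac{1}{2\Lambda}$ and $\frac{2}{\gamma}$), as you acknowledge.
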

  
  We note that when $L \gg \lambda$, the value $\frac{\log L}{\log \lambda}$ is an approximation for the number of pulses $k_n$ located in the exponential band $[L^\ell, L^{\ell + 1}]$. This accounts for the form of the RHS of \eqref{eq:exponentialShellEstIntro}. For additional discussion on the validity of an analogue of \eqref{eq:exponentialShellEstIntro} for more general systems, see the discussion in Section \ref{subsec:discussion2}. 

  The proofs of Theorems \ref{thm:pulseLocalizationIntro} and \ref{thm:expShellLawIntro} involve the use of \emph{spectral distributions}, a tool from signal and audio processing \cite{oppenheimSignalsSystemsInference2016}.

\subsection{Discussion and comments on previous work}\label{subsec:references}

The study of the power spectrum of passive scalars has a long history-- see, e.g., Batchelor's original paper \cite{batchelor1959small} as well as the influential early works \parencite{gibson1968fine, kraichnan1968small}, each of which provide a plausible (nonrigorous) argument for the mode-wise \eqref{eq:modewiseIntro} and radial laws \eqref{eq:radialLaw} given above. For more contemporary accounts, see, e.g., the surveys \parencite{warhaft2000passive, shraiman2000scalar, donzis2010batchelor} and references therein. We acknowledge that many aspects of passive scalar turbulence are completely unaddressed by this manuscript, including: ideal turbulence \parencite{eyink2024onsager, bedrossianBatchelorSpectrumPassive2020}; the Obukhov-Corrsin correction \parencite{obukhov1949structure,corrsin1951spectrum} when $\Reynolds$ is large; the Batchelor-Howell-Townsend spectrum for passive scalar power spectral density at spatial ranges where diffusivity predominates \cite{BHT1959} (c.f. work of Jolly and Wirosoetisno \parencite{jolly2023batchelor, jolly2023batchelor2, jolly2020tracer}); and intermittency effects in passive scalar turbulence \cite[Section 4]{warhaft2000passive}. 

Since \cite{bedrossianBatchelorSpectrumPassive2020}, follow-up works have extended analysis of Navier-Stokes Batchelor spectrum to settings where the fluid is degenerately forced \cite{cooperman2024exponential,nersesyan2024chaotic}. Other papers close to our work include that of Feng and Iyer \cite{feng2019dissipation}, who introduce pulsed-diffusion models with CAT maps as models for passive scalar advection in the free decay problem (absence of source term); and of Doering and Miles \cite{miles2018shell}, who study the free decay problem with a toy shell model indicating the important role played by the `leaky transmission' of Fourier mass from large to small scales. For additional works on small scale formation in pulsed diffusion models with spatially-regular advection, see, e.g., \parencite{elgindiOptimalEnhancedDissipation2023, cooperman2024harris, blumenthalExponentialMixingRandom2022}.




The spectral theory of transfer operators invoked in Section \ref{sec:sectorial} is part of a vibrant subfield of dynamical systems theory focused on statistical properties of various classes of chaotic dynamical systems exhibiting `shear-straining', which in dynamics literature is referred to as \emph{hyperbolicity} -- a reference to the saddle-like behavior of expansion/contraction occurring along hyperbolic trajectories. We refer to the texts \parencite{baladi2000positive,demersTransferOperatorsHyperbolic2021} for further discussion. Spectral stability in these frameworks, a core part of our approach in Section \ref{sec:sectorial}, is due to Keller, Liverani, Gouzel and others \parencite{gouezelBanachSpacesAdapted2005a,kellerStabilitySpectrumTransfer1999}. Finally, we point out that the specific function spaces used in Section \ref{sec:sectorial} are inspired by those introduced by Baladi and Tsujii \parencite{baladiAnisotropicHolderSobolev2007} (see also \cite{jezequel2023minicourse})  and Faure-Roy-Sj\"{o}strand \parencite{faure2008semi,faure2006ruelle}. 

\subsection*{Plan for the paper}

    Background on pulsed-diffusion and a general criterion for the cumulative version of Batchelor's law are given in Section \ref{sec:background}. We turn in Section \ref{sec:sectorial} to the proof of Theorem \ref{thm:sectorialIntro} in a somewhat more general setting, with some technical work deferred to Appendix \ref{sec:APP}. Finally, Section \ref{sec:dyadic} gives the precise statements and proofs of Theorems \ref{thm:pulseLocalizationIntro}, \ref{thm:expShellLawIntro}.

\section{Background}\label{sec:background}

\newcommand{\Lcfk}{\Lc_{f,\kappa}}

    Our aim in this section is to present a straightforward proof of the cumulative law \eqref{eq:cumLawIntro} (c.f. Proposition \ref{prop:cumLawPerturbedIntro}) in the pulsed-diffusion setting, largely following \cite{bedrossianBatchelorSpectrumPassive2020} but in a simplified form. 
    
    \bigskip 

    \noindent {\bf Some notation.} In this paper, we write $O(\cdot)$ for the standard big-O notation. Often, we will include subscripts, e.g., $O_n(\cdot)$, to refer to a quantity bounded from above by a multiplicative constant depending on $n$. When present, subscripts of the form ``$\neg n$'' are included to emphasize quantities the multiplicative constant \emph{does not} depend on. Lastly, for nonnegative reals $a, b$ we will write $a \lesssim b$ if $a = O(b)$, with analogous subscript conventions. 
    
    \bigskip

    \noindent {\bf Setup.} We take $f : \T^d \to \T^d$ to be any smooth diffeomorphism preserving the volume on $\T^d$. The composition $\Lc_{f, \kappa} = e^{\kappa \Delta} \circ \Lc_f$ is as in the previous section, as is the Markov chain $(g_n)$ given by 
    \[g_{n + 1} = \Lcfk g_n + \omega_{n + 1} b \, . \] 
    For us, the precise form of the source term $b$ is unimportant; it suffices to assume $b$ is $C^\infty$ and mean-zero, with $\| b \| = 1$. Similarly, the precise law of the random variables $(\omega_n)$ is unimportant, so long as $(\omega_n)$ is an IID sequence of square-integrable random variables of a probability space $(\Omega, \Fc, \P)$ for which $\E [\omega_i] = 0$ and $\Var(\omega_i) = 1$ (the latter assumption taken for simplicity). 
    
    The norm $\| \cdot \|$ refers to the $L^2$ norm throughout. We will write $\| \cdot\|_{H^s}, s \in \R$ for the family of \emph{homogeneous $H^s$ seminorms} defined by 
    \[\| h\|_{H^s}^2 = \sum_{k \in \Z^d \setminus \{ 0 \}} |k|^{2 s} | \hat h(k)|^2 \,, \]
    noting that $\| \cdot \|_{H^s}$ as above is a true norm on the space of mean-zero functions in $H^s$ (the only ones we consider in this paper). 
    
    \bigskip
    
    The main result of this section uses the following crucial hypothesis. 
    \begin{definition}\label{defn:kappaUnifH-1Decay2}
        We say that $f$ exhibits \emph{$\kappa$-uniform $H^{-1}$ decay at rate $\gamma > 0$} if for all $h : \T^d \to \R$ smooth and mean-zero, there exists $C_h > 0$ such that 
        \[\| \Lc_{f, \kappa}^n h \|_{H^{-1}} \leq C_h e^{- \gamma n}\]
        for all $\kappa \geq 0$ and $n \geq 1$. 
        Note, crucially, that the rate $\gamma$ and the constant $C_h$ are assumed independent of the diffusivity $\kappa$. 
    \end{definition}
    
We will prove the following. 

\begin{theorem}\label{thm:cumLaw2} Assume $f$ exhibits $\kappa$-uniform $H^{-1}$ decay at rate $\gamma > 0$. Then the following hold.
    \begin{itemize}
        \item[(i)] For all $\kappa > 0$ the Markov chain $(g_n)$ admits a unique stationary measure $\mu_{f, \kappa}$ supported in $L^2$ and for which the distribution is given by 
        \begin{align}\label{eq:gfkappaRep2} g_{f, \kappa} = \sum_{n = 0}^{\infty} \eta_n \Lc^n_{f, \kappa} b\end{align}
        where $(\eta_n)$ is an IID sequence with the same law as the $\omega_n$'s. 
        \item[(ii)] There exists a constant $C > 0$, independent of $\kappa$,  with the following property. Let $\kappa > 0$ be small and let $2 \leq N \leq \kappa^{-1/2}$. Then, 
        \begin{align}\label{eq:cumLaw2}\E \| \Pi_{\leq N} g_{f, \kappa}\|^2 \in \left[\frac{1}{2 \Lambda} \log N - C, \frac{2}{\gamma} \log N + C\right]  \end{align}
        and 
        \begin{align} \label{eq:controlDissipRange} \E \|\Pi_{\geq \frac{1}{\sqrt{\kappa}}} g_{f, \kappa}\|^2 \leq C \,. \end{align}
    \end{itemize}
\end{theorem}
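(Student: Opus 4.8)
The plan is to prove (i) and the three estimates of (ii) in turn, the whole argument resting on two elementary consequences of the $\kappa$-uniform $H^{-1}$ decay hypothesis. The first is a one-step upgrade to $L^2$ decay: since $e^{\kappa\Delta}\colon H^{-1}\to L^2$ has operator norm $O(\kappa^{-1/2})$ (because $\sup_{|k|\ge 1}|k|^2 e^{-2c\kappa|k|^2}\lesssim \kappa^{-1}$) and $\Lc_f$ is bounded on $H^{-1}$, one gets $\|\Lcfk^n b\|\lesssim \kappa^{-1/2}\|\Lcfk^{n-1}b\|_{H^{-1}}\lesssim \kappa^{-1/2}e^{-\g n}$ for $n\ge 1$, hence $\|\Lcfk^n b\|\le \min\{1,\,C_{\kappa,b}\,e^{-\g n}\}$, which is square-summable in $n$. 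The second is an a priori growth bound in $H^1$: since $\|\Lc_f h\|_{H^1}\le e^{\L}\|h\|_{H^1}$ and $e^{\kappa\Delta}$ contracts every homogeneous Sobolev norm, $\|\Lcfk^n b\|_{H^1}\le e^{\L n}\|b\|_{H^1}$. For part (i), the square-summability makes $\sum_{n\ge 0}\eta_n\Lcfk^n b$ converge in $L^2(\Omega;L^2(\T^d))$ and a.s.\ in $L^2(\T^d)$; running the chain from an arbitrary mean-zero $g_0\in L^2$ and reversing time gives $g_n=\Lcfk^n g_0+\sum_{j=1}^n\omega_j\Lcfk^{n-j}b$, and since $\Lcfk^n g_0\to 0$ in $L^2$ (true for smooth $g_0$ by the decay above, hence for all mean-zero $g_0$ since $\|\Lcfk\|_{L^2\to L^2}\le 1$), the law of $g_n$ converges to that of the series, which is directly checked to be stationary; this yields existence, the representation \eqref{eq:gfkappaRep2}, and uniqueness.

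For (ii), since the $\eta_n$ are uncorrelated with unit variance, $\E\|\Pi_{\le N}g_{f,\kappa}\|^2=\sum_{n\ge 0}\|\Pi_{\le N}\Lcfk^n b\|^2$ and $\E\|\Pi_{\ge 1/\sqrt\kappa}g_{f,\kappa}\|^2=\sum_{n\ge 0}\|\Pi_{\ge 1/\sqrt\kappa}\Lcfk^n b\|^2$, and it remains to estimate these two deterministic sums. For the upper bound I would use $\|\Pi_{\le N}\Lcfk^n b\|^2\le \min\{1,\,N^2\|\Lcfk^n b\|_{H^{-1}}^2\}\le \min\{1,\,N^2 C_b^2 e^{-2\g n}\}$ for $n\ge 1$: the first bound contributes $\g^{-1}\log N+O(1)$ over the range $\{n: N C_b e^{-\g n}\ge 1\}$, and the tail of the second bound is a geometric series of size $O(1)$; including $n=0$ gives $\E\|\Pi_{\le N}g_{f,\kappa}\|^2\le \g^{-1}\log N+C\le 2\g^{-1}\log N+C$, with no restriction on $N$ needed here.

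The dissipative-range bound \eqref{eq:controlDissipRange} is the one place a trick is used. Since $e^{\kappa\Delta}$ is a diagonal contraction, $\|\Pi_{\ge 1/\sqrt\kappa}\Lcfk^n b\|^2\le \|\Pi_{\ge 1/\sqrt\kappa}\Lc_f\Lcfk^{n-1}b\|^2$, while on $|k|\ge 1/\sqrt\kappa$ one has $1-e^{-2c\kappa|k|^2}\ge 1-e^{-2c}=:c'>0$; hence
\[\|\Lcfk^{n-1}b\|^2-\|\Lcfk^n b\|^2=\sum_{k}\bigl(1-e^{-2c\kappa|k|^2}\bigr)\bigl|\widehat{\Lc_f\Lcfk^{n-1}b}(k)\bigr|^2\ge c'\,\|\Pi_{\ge 1/\sqrt\kappa}\Lcfk^n b\|^2.\]
Summing the telescoping left-hand side over $n\ge 1$ gives $\|b\|^2-\lim_{n}\|\Lcfk^n b\|^2=1$, so $\sum_{n\ge 0}\|\Pi_{\ge 1/\sqrt\kappa}\Lcfk^n b\|^2\le 1/c'$, which is \eqref{eq:controlDissipRange}.

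The lower bound is where the $H^1$ growth bound and the Batchelor threshold $N\le\kappa^{-1/2}$ enter, and it is the genuinely delicate part. From $\|\Lcfk^n b\|_{H^1}\le e^{\L n}\|b\|_{H^1}$ we get $\|\Pi_{>N}\Lcfk^n b\|^2\le N^{-2}e^{2\L n}\|b\|_{H^1}^2$, and from $e^{-x}\ge 1-x$ applied to each Fourier mode after one step, $\|\Lcfk^n b\|^2\ge \|\Lcfk^{n-1}b\|^2-2c\kappa\,e^{2\L n}\|b\|_{H^1}^2$, so telescoping gives $\|\Lcfk^n b\|^2\ge 1-C_b\,\kappa\,e^{2\L n}$. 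Hence whenever both $N^{-2}e^{2\L n}$ and $\kappa e^{2\L n}$ lie below a small absolute constant, $\|\Pi_{\le N}\Lcfk^n b\|^2=\|\Lcfk^n b\|^2-\|\Pi_{>N}\Lcfk^n b\|^2\ge \tfrac12$; and the point is that $N\le\kappa^{-1/2}$ forces the \emph{first} of these to be the binding constraint, so this holds for all $n\le \L^{-1}\log N-C$, giving $\E\|\Pi_{\le N}g_{f,\kappa}\|^2\ge \tfrac12(\L^{-1}\log N-C)\ge (2\L)^{-1}\log N-C$. I expect the main obstacle to be exactly this balancing act: one must keep the advective spreading below wavenumber $N$ (forcing $n\lesssim \L^{-1}\log N$) while the diffusion has not yet destroyed a definite fraction of the $L^2$ mass (forcing $n\lesssim \tfrac1{2\L}\log\kappa^{-1}$), and the only new computation it requires is the telescoped $L^2$ lower bound $\|\Lcfk^n b\|^2\ge 1-C_b\kappa e^{2\L n}$ — everything else is bookkeeping with geometric series.
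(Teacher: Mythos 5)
Your proof is correct, and for the two substantive estimates --- the cumulative upper and lower bounds --- it follows the paper's argument essentially step for step: the variance identity $\E\|A g_{f,\kappa}\|^2=\sum_n\|A\Lc_{f,\kappa}^n b\|^2$ (Lemma \ref{lem:identityForProjection}), Bernstein plus $\kappa$-uniform $H^{-1}$ decay for the upper bound, and the $H^1$ growth bound $\|\Lc_{f,\kappa}^n b\|_{H^1}\le e^{\Lambda n}\|b\|_{H^1}$ together with the telescoped energy estimate $\|\Lc_{f,\kappa}^n b\|^2\ge 1-C\kappa e^{2\Lambda n}$ (the content of Lemma \ref{lem:enhancedDissipLower2}) for the lower bound, with $N\le\kappa^{-1/2}$ making the Bernstein tail the binding constraint exactly as in the paper. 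You diverge in two minor, harmless places. For part (i) you deduce square-summability of $\|\Lc_{f,\kappa}^n b\|$ from the $H^{-1}$ decay via the smoothing bound $\|e^{\kappa\Delta}\|_{H^{-1}\to L^2}\lesssim\kappa^{-1/2}$; the paper instead uses only that $\|e^{\kappa\Delta}\|<1$ on mean-zero $L^2$, hence $\|\Lc_{f,\kappa}\|_{L^2\to L^2}<1$, so the series converges geometrically --- which shows (as the paper remarks) that part (i) requires no decay hypothesis at all. For the dissipative-range bound \eqref{eq:controlDissipRange} you telescope the packet energies via $\|\Lc_{f,\kappa}^{n-1}b\|^2-\|\Lc_{f,\kappa}^n b\|^2\ge c'\,\|\Pi_{\ge\kappa^{-1/2}}\Lc_{f,\kappa}^n b\|^2$, whereas the paper phrases the same dissipation identity as an energy balance for the stationary scalar (Lemma \ref{lem:energyBalance2}) and then applies Bernstein; these are the same computation packaged differently, and yours is arguably the more elementary. (Two cosmetic slips: the $n=0$ term $\|\Pi_{\ge\kappa^{-1/2}}b\|^2\le 1$ sits outside your telescope, so the constant is $1/c'+1$ rather than $1/c'$; and your sharper upper bound $\gamma^{-1}\log N+C$ is of course consistent with the stated $2\gamma^{-1}\log N+C$.)
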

Above, $\Lambda > 0$ is given by 
\begin{align} \label{eq:defnLambda2} \Lambda = \log \sup_{x \in \T^d} | (D f_x)^{-1} | \,. \end{align}
As one can show, $\kappa$-uniform $H^{-1}$ decay at rate $\gamma$ implies that  $\gamma \leq \Lambda$ (Lemma \ref{lem:compareGammaLambda2}). 

For details and for further discussion on the discrepancy between the upper and lower bounds in \eqref{eq:cumLaw2}, see Section \ref{subsec:discussion2}. 

Following some brief remarks, in Sections \ref{subsec:pfPartI2} -- \ref{subsec:upperBoundDissipative2} we prove Theorem \ref{thm:cumLaw2}. Some additional discussion on the mismatch between the upper and lower bounds in \eqref{eq:cumLaw2} is given in Section \ref{subsec:discussion2}. 

\begin{remark}[Use of $H^{-1}$-decay]
    The assumption of $\kappa$-uniform $H^{-1}$ decay is only used in the proof of the cumulative law upper bound. Indeed, all other aspects of the statement of Theorem \ref{thm:cumLaw2} -- all of part (i), the cumulative law lower bound, and the upper bound in the dissipative range -- are true for \emph{any $C^1$ diffeomorphism $f : \T^d \to \T^d$ which preserves volume.}
\end{remark}

\begin{remark}[Replacing $H^{-1}$ with $H^{-s}$ for $s > 0$]
    It changes nothing to replace the assumption of $\kappa$-uniform $H^{-1}$-decay to that of $H^{-s}$ decay for any $s > 0$. Indeed, it is straightforward to check that if $\kappa$-uniform $H^{-s}$ holds at some rate $\gamma_s > 0$, the cumulative law upper bound from \eqref{eq:cumLaw2} now becomes 
    \[\E \| \Pi_{\leq N} g_{f, \kappa}\|^2 \leq \frac{2s}{\gamma} \log N + C \,. \]
\end{remark}



\subsection{Proof of Theorem \ref{thm:cumLaw2}(i)}\label{subsec:pfPartI2}

Let $g_0, h_0 \in L^2$ be arbitrary fixed mean-zero scalars. Let $(g_n), (h_n)$ denote the scalar Markov chains initiated at $g_0$ and $h_0$, respectively, with the same random sample $\omega_n$; that is, on expanding \eqref{eq:defnPulseDiffIntro}, 
\begin{gather*}\label{eq:defnCoupling} g_n = \Lcfk^n g_0 + z_n \, , \qquad h_n = \Lcfk^n h_0 + z_n \, ,  \end{gather*}
where 
\[z_n := \omega_1 \Lcfk^{n-1} b + \dots + \omega_{n-1} \Lcfk b + \omega_n b \, , \]
treating both $(g_n)$ and $(h_n)$ as random sequences of $L^2$ random variables on the same probability space $(\Omega, \Fc, \P)$. 

The following will be sufficient to establish both existence and uniqueness of a stationary measure in $L^2$ as well as the representation \eqref{eq:gfkappaRep2}: 
\begin{claim}\label{cla:suffExistUnique2}\ 
    \begin{itemize}
        \item[(i)] $\| g_n - h_n\|_{L^2} \to 0$ as $n \to \infty$; and 
        \item[(ii)] The sequence $(z_n)$ of random $L^2$ functions converges in distribution\footnote{Recall that a sequence of probability measures $(\nu_n)$ on a metric space $X$ converges in distribution to a probability measure $\nu$ if for all $\varphi : X \to \R$ bounded and continuous, $\int \varphi d \nu_n \to \int \varphi d \nu$ as $n \to \infty$. } to 
        \[z_\infty = \sum_{n=0}^\infty \eta_n \Lc^n_{f, \kappa} b\]
        where $(\eta_n)$ is another IID sequence of standard Normal random variables. 
    \end{itemize}
\end{claim}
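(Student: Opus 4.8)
The plan is to derive both items from a single elementary observation: for fixed $\kappa > 0$, the operator $\Lcfk$ restricts to a \emph{strict} contraction of the closed subspace $L^2_0 \subset L^2(\T^d)$ of mean-zero functions. Indeed, since $f$ preserves volume, $\Lc_f$ is unitary on $L^2(\T^d)$ and preserves $L^2_0$; and on the Fourier side $e^{\kappa\Delta}$ is multiplication by $e^{-\kappa |2\pi k|^2}$, so for $h \in L^2_0$ — where every contributing wavenumber satisfies $|k| \geq 1$ — one has $\|e^{\kappa\Delta} h\|_{L^2} \leq e^{-c\kappa}\|h\|_{L^2}$ with $c := 4\pi^2 > 0$. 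Hence $\|\Lcfk h\|_{L^2} \leq e^{-c\kappa}\|h\|_{L^2}$ on $L^2_0$.

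\textbf{Item (i).} Subtracting the two recursions gives $g_n - h_n = \Lcfk^n(g_0 - h_0)$, and $g_0 - h_0 \in L^2_0$ by hypothesis, so $\|g_n - h_n\|_{L^2} \leq e^{-c\kappa n}\|g_0 - h_0\|_{L^2} \to 0$. (The rate degenerates as $\kappa \downarrow 0$; this is exactly why the $\kappa$-uniform estimates of Theorem \ref{thm:cumLaw2}(ii) instead invoke the separate $H^{-1}$-decay hypothesis.)

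\textbf{Item (ii).} Reindexing by $m = n - j$,
\[z_n = \sum_{j=1}^n \omega_j\, \Lcfk^{\,n-j} b = \sum_{m=0}^{n-1} \omega_{n-m}\, \Lcfk^{\,m} b .\]
Since $(\omega_j)$ is IID, the vector $(\omega_n, \dots, \omega_1)$ has the same law as $(\eta_0, \dots, \eta_{n-1})$, so as $L^2(\T^d)$-valued random variables $z_n \stackrel{d}{=} \tilde z_n := \sum_{m=0}^{n-1} \eta_m \Lcfk^{\,m} b$. The terms $\eta_m \Lcfk^m b$ are independent, mean-zero, $L^2(\T^d)$-valued, with $\E\|\eta_m \Lcfk^m b\|_{L^2}^2 = \|\Lcfk^m b\|_{L^2}^2 \leq e^{-2c\kappa m}$ (using $\|b\|_{L^2}=1$, $b$ mean-zero, and the contraction above), which is summable. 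By the Hilbert-space version of Kolmogorov's convergence theorem, $\tilde z_n \to z_\infty := \sum_{m=0}^\infty \eta_m \Lcfk^m b$ almost surely and in $L^2(\Omega; L^2(\T^d))$; in particular $\E\|\tilde z_n - z_\infty\|_{L^2}^2 = \sum_{m \geq n}\|\Lcfk^m b\|_{L^2}^2 \to 0$. Convergence in $L^2(\Omega; L^2)$ implies convergence in probability, hence in distribution, and since $z_n \stackrel{d}{=} \tilde z_n$ for each $n$ we conclude $z_n \to z_\infty$ in distribution, as required.

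\textbf{Main obstacle.} The computations are routine; the one genuinely load-bearing move is the time-reversal of the IID source, which replaces the ``forward'' partial sum $z_n$ — all of whose terms change with $n$ — by the ``backward'' series $\tilde z_n$ with \emph{fixed} terms, whose convergence is immediate from summability of $L^2$-norms. The subtlety to handle carefully is the mode of convergence: on the original probability space $z_n$ itself need not converge almost surely, so one passes to the limit in law via $\tilde z_n$ together with the exact equality $z_n \stackrel{d}{=} \tilde z_n$. Verifying the Hilbert-space Kolmogorov criterion — or, equivalently, running the same estimate Fourier-mode-by-mode and applying the three-series theorem in each coordinate — is the only real technical point.
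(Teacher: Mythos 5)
Your proof is correct and takes essentially the same route as the paper, which simply declares both items immediate from the strict contraction $\| \Lc_{f,\kappa}\| < 1$ on mean-zero $L^2$ (a consequence of $\Lc_f$ being an isometry and $e^{\kappa\Delta}$ contracting by $e^{-4\pi^2\kappa}$ on modes $|k|\geq 1$). You merely supply the standard details the paper leaves implicit --- exponential decay of $\Lc_{f,\kappa}^n(g_0-h_0)$ for (i), and for (ii) the time-reversal identity $z_n \stackrel{d}{=} \tilde z_n$ combined with summability of $\|\Lc_{f,\kappa}^m b\|^2$ --- so there is nothing to correct.
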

Both parts of Claim \ref{cla:suffExistUnique2} are immediate from the fact that $\| \Lc_f\| = 1$ and that $\| e^{\kappa \Delta}\| < 1$ for all $\kappa > 0$, hence $\| \Lcfk\| < 1$. Note that all operator norms are taken on the space of mean-zero scalars in $L^2$. 

The following argument for sufficiency of Claim \ref{cla:suffExistUnique2} is more-or-less standard, and is included for completeness. 

\begin{proof}[Proof of Theorem \ref{thm:cumLaw2}(i) assuming Claim \ref{cla:suffExistUnique2}]
Let $\mu_{f, \kappa}$ denote the distribution of $z_\infty$ on $L^2$. That $\mu_{f, \kappa}$ is stationary is immediate from the fact that if $\omega$ is a standard normal random variable independent of the $(\eta_n)$, then 
\[\Lcfk z_\infty + \omega b = \omega b + \sum_{n=1}^\infty \eta_{n -1} \Lcfk^n b\]
has the same distribution as $z_\infty$. 

For uniqueness, we will check that any two ergodic stationary measures coincide; if this is true, then by the ergodic decomposition theorem, it follows that only one stationary measure exists and is automatically ergodic.  Let $\hat \mu_1, \hat \mu_2$ be any two ergodic stationary measures for the scalar Markov chain and let $\psi : L^2 \to \R$ be any bounded continuous function. By the ergodic theorem, it holds that for $\hat \mu_1$-a.e. $g_0 \in L^2$ and $\hat \mu_2$-a.e. $h_0 \in L^2$, one has 
\[\frac1n \sum_{i=0}^{n-1} \psi(g_i) \to \int \psi d \hat \mu_1 \, , \quad \frac1n \sum_{i=0}^{n-1} \psi(h_i) \to \int \psi d \hat \mu_2 \, , \] 
where without loss we may take the sample paths $(g_n), (h_n)$ as in \eqref{eq:defnCoupling} with the same forcing. Since $\psi : L^2 \to \R$ is bounded and continuous, Claim \ref{cla:suffExistUnique2}(i) implies immediately that 
\[\frac1n \sum_{i=0}^{n-1} \psi(g_i) - \frac1n \sum_{i=0}^{n-1} \psi(h_i) \to 0 \, , \]
hence $\int \psi d \hat \mu_1 = \int \psi d \hat \mu_2$. As the measures $\hat \mu_1, \hat \mu_2$ agree on all bounded continuous $\psi$, it follows that $\hat \mu_1 = \hat \mu_2$. This completes the proof. 
\end{proof} 



\begin{remark}
    We caution that while $\mu_{f, \kappa}$ is a Borel measure supported in $L^2$ for each fixed $\kappa > 0$, it is not the case that $(\mu_{f, \kappa})_{\kappa > 0}$ is tight as a family of measures on $L^2$. For more on the ideal turbulent law converged-to by the family $(\mu_{f, \kappa})$ as $\kappa \to 0$, see \cite{bedrossianBatchelorSpectrumPassive2020}.  

\end{remark}

\subsection{Proof of cumulative law upper bound}

The following identity will be used throughout the remainder of the proof of Theorem \ref{thm:cumLaw2}. 

\begin{lemma}\label{lem:identityForProjection}
    In the notation of Theorem \ref{thm:cumLaw2}(i), it holds that 
    \[\E \| A g_{f, \kappa}\|^2 = \sum_{0}^{\infty} \| A \Lcfk^n b\|^2 \]
    for any bounded linear operator $A : L^2 \to L^2$. 
\end{lemma}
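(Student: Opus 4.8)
The plan is to expand $g_{f,\kappa}$ via the series representation \eqref{eq:gfkappaRep2} and exploit the orthogonality of the IID coefficients $(\eta_n)$ in $L^2(\Omega)$. It is convenient to phrase everything inside the Hilbert space $\Hc := L^2(\Omega, \P; L^2(\T^d))$ of square-integrable, $L^2(\T^d)$-valued random variables, with inner product $\langle X, Y \rangle_{\Hc} = \E \langle X, Y\rangle_{L^2}$, so that $\|X\|_{\Hc}^2 = \E \|X\|_{L^2}^2$. Note that $b$ and all the $\Lcfk^n b$ are mean-zero, so the discussion takes place on the mean-zero subspace, where $\Lcfk$ and $A$ act as before.

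First I would record the elementary bound $\|\Lcfk\| \le \|e^{\kappa\Delta}\|\cdot\|\Lc_f\| < 1$ on mean-zero scalars (already used in the proof of Theorem \ref{thm:cumLaw2}(i)), so that $\|\Lcfk^n b\|_{L^2} \le r^n$ for some $r = r(\kappa) \in (0,1)$ and in particular $\sum_{n\ge 0}\|\Lcfk^n b\|_{L^2}^2 < \infty$. Next, let $v_n \in \Hc$ be the random variable $\omega \mapsto \eta_n(\omega)\,\Lcfk^n b$. Since the $(\eta_n)$ are IID with $\E\eta_n = 0$, $\Var\eta_n = 1$, one computes $\langle v_n, v_m\rangle_{\Hc} = \E[\eta_n\eta_m]\,\langle \Lcfk^n b, \Lcfk^m b\rangle_{L^2} = \delta_{nm}\|\Lcfk^n b\|_{L^2}^2$, so $(v_n)$ is an orthogonal family in $\Hc$ with $\sum_n \|v_n\|_{\Hc}^2 < \infty$; hence $\sum_n v_n$ converges in $\Hc$, and by \eqref{eq:gfkappaRep2} its sum is $g_{f,\kappa}$.

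Now view $A$ as a bounded operator on $\Hc$ acting pointwise in $\omega$ (with the same operator norm). Then $A v_n$ is the random variable $\omega \mapsto \eta_n(\omega)\, A\Lcfk^n b$, and the same computation gives $\langle Av_n, Av_m\rangle_{\Hc} = \delta_{nm}\|A\Lcfk^n b\|_{L^2}^2$, so $(Av_n)$ is again orthogonal in $\Hc$. By continuity of $A$ and the Pythagorean theorem for convergent orthogonal series, $A g_{f,\kappa} = \sum_n A v_n$ converges in $\Hc$ and
\[
\E\|A g_{f,\kappa}\|_{L^2}^2 = \|A g_{f,\kappa}\|_{\Hc}^2 = \sum_{n=0}^\infty \|A v_n\|_{\Hc}^2 = \sum_{n=0}^\infty \|A\Lcfk^n b\|_{L^2}^2,
\]
which is the claimed identity.

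There is no real obstacle here: the only thing needing care is the legitimacy of interchanging $A$, $\E$, and the infinite sum, and this is disposed of cleanly once one works in $\Hc$ and has the geometric decay $\|\Lcfk^n b\|_{L^2} \le r^n$ ensuring $\ell^2$-summability of the coefficient norms. If one prefers to avoid introducing $\Hc$, the same argument can be run with partial sums: set $S_N = \sum_{n=0}^N \eta_n A\Lcfk^n b$, expand $\E\|S_N\|_{L^2}^2$ and use $\E[\eta_n\eta_m] = \delta_{nm}$ to get $\E\|S_N\|_{L^2}^2 = \sum_{n=0}^N \|A\Lcfk^n b\|_{L^2}^2$, then let $N\to\infty$ using that $S_N \to A g_{f,\kappa}$ in $L^2(\Omega;L^2)$ (because $A$ is bounded and $\sum_n \eta_n\Lcfk^n b$ converges there).
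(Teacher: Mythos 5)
Your proof is correct and follows essentially the same route as the paper's: expand $g_{f,\kappa}$ via the series representation \eqref{eq:gfkappaRep2}, and use $\E[\eta_n\eta_m]=\delta_{nm}$ to kill the cross terms. The paper's proof is a two-line version of this computation; your extra care with the Bochner-space formulation and the justification of interchanging $A$, $\E$, and the sum is a legitimate (and welcome) tightening of the same argument, not a different method.
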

\begin{proof}[Proof of Lemma \ref{lem:identityForProjection}]
    By definition, 
    \begin{align}
        \| A g_{f, \kappa}\|^2 = \sum_{m = 0}^\infty \sum_{n = 0}^{\infty} \eta_m \eta_n \langle A \Lcfk^m b, A \Lcfk^n b \rangle \,. 
    \end{align}
    On taking $\E$ of both sides, terms with $m \neq n$ will vanish, since $\E(\eta_n \eta_m) = 0$.  
\end{proof}

Turning now to the upper bound in \eqref{eq:cumLaw2}, let $C_b, \gamma > 0$ be as in Definition \ref{defn:kappaUnifH-1Decay2}. Let $N \geq 2$ be arbitrary. Applying Lemma \ref{lem:identityForProjection} with $A = \Pi_{\leq N}$, 
\begin{align}
    \E \| \Pi_{\leq N} g_{f, \kappa} \|^2 & = \sum_{n=0}^{\infty} \| \Pi_{\leq N} \Lcfk^n b\|^2 \\ 
    & \leq \sum_{n=0}^{\infty}  \min\{1, N^2 \| \Lcfk^n b\|_{H^{-1}}^2\} 
\end{align}
where in the second line we used Bernstein's inequality, and have used that $\|\Lcfk\| \leq 1, \| b \| = 1$. Applying the $H^{-1}$ decay estimate gives 
\[\E \| \Pi_{\leq N} g_{f, \kappa}\|^2 \leq \sum_{n=0}^{\infty} \min\{1, N^2 \cdot C_b e^{- \gamma n}\}\, .  \]
The latter term in the $\min\{ \cdots\}$ is smaller for all 
\begin{align}\label{eq:defnNlate2}
n \geq n_L := \frac{2}{\gamma} \log N + \frac{1}{\gamma} \log C_b \, , 
\end{align}
from which it follows that 
\begin{align*} \E \| \Pi_{\leq N} g_{f, \kappa}\|^2 & \leq n_L + C_b N^2 \sum_{n=\lceil n_L \rceil}^{\infty} e^{- n \gamma} \\ 
& \leq n_L + \frac{C_b N^2 e^{- n_L \gamma}}{1 - e^{- \gamma}} = \frac{2}{\gamma} \log N + C
\end{align*}
where $C > 0$ is independent of $\kappa$ and $N$. This is precisely the sought-after upper bound in \eqref{eq:cumLaw2}.

\subsection{Proof of cumulative law lower bound}

The following estimate will be used. 

\begin{lemma}\label{lem:enhancedDissipLower2}
    Assume $\kappa \in (0,1]$. Let 
    \[\tau_\kappa = \min\{ n \geq 0 : \| \Lcfk^n b \|^2 < \frac12 \| b \|^2 \} \, . \]
    Then, 
    \[\tau_\kappa \geq \frac{1}{2\Lambda} |\log \kappa| - C \, , \]
    where $C > 0$ is a constant independent of $\kappa$ and where $\Lambda$ is as in \eqref{eq:defnLambda2}. 
\end{lemma}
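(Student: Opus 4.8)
The plan is to track how fast the $L^2$ mass of $\Lcfk^n b$ can decay, using the fact that the only mechanism for decay is the heat semigroup $e^{\kappa\Delta}$ acting on whatever Fourier content $\Lc_f^n b$ has accumulated at high wavenumbers. Since $\Lc_f$ is an $L^2$-isometry, we have $\|\Lcfk^n b\|^2 = \|e^{\kappa\Delta}\Lc_f (e^{\kappa\Delta}\Lc_f \cdots)\|^2$; the cleanest route is to lower-bound $\|\Lcfk^n b\|^2$ by retaining only the low-frequency part. Concretely, for any radius $R$,
\[
\|\Lcfk^n b\|^2 \;\geq\; \|\Pi_{\leq R}\Lcfk^n b\|^2 \;\geq\; e^{-2\kappa R^2 n}\,\|\Pi_{\leq R}\Lc_f^n b\|^2 \;\geq\; e^{-2\kappa R^2 n}\big(\|b\|^2 - \|\Pi_{>R}\Lc_f^n b\|^2\big),
\]
where the middle inequality uses that each of the $n$ applications of $e^{\kappa\Delta}$ multiplies the mode at wavenumber $k$ by $e^{-\kappa C|k|^2}\geq e^{-\kappa C R^2}$ on the range $|k|\leq R$ — so one keeps track of the fact that the intermediate iterates might leave and re-enter the ball $|k|\leq R$, which is why it is cleaner to bound $\|\Pi_{\leq R}\Lcfk^n b\|$ directly by noting $e^{\kappa\Delta}$ is a diagonal contraction with the worst factor on $|k|\leq R$ being $e^{-\kappa CR^2}$, applied $n$ times, times $\|\Pi_{\leq R}\Lc_f^n b\|$ (here using $\Lc_f$ isometric and $\|e^{\kappa\Delta}\|\le 1$ to drop the truncations on intermediate steps).

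Next I would control $\|\Pi_{>R}\Lc_f^n b\|^2$, i.e. how much Fourier mass can have escaped past wavenumber $R$ after $n$ steps of the smooth dynamics. This is where the definition of $\Lambda = \log\sup_x |(Df_x)^{-1}|$ enters: since $f$ is a $C^\infty$ volume-preserving diffeomorphism and $b$ is smooth, standard estimates on how composition with a diffeomorphism spreads Fourier support give that $\Lc_f^n b$ has negligible mass beyond wavenumber $\sim e^{\Lambda n}$ — more precisely, $\|\Pi_{>R}\Lc_f^n b\|^2 \leq \tfrac14\|b\|^2$ provided $R \geq C_0 e^{\Lambda n}$ for a constant $C_0$ depending only on $f$ and $b$ (this follows from an $H^s$ bound $\|\Lc_f^n b\|_{H^s} \lesssim_{s} e^{s\Lambda n}$, valid for every $s>0$, combined with Bernstein/Chebyshev in frequency). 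Fixing $R = C_0 e^{\Lambda n}$ then yields
\[
\|\Lcfk^n b\|^2 \;\geq\; \tfrac34\,e^{-2\kappa C_0^2 e^{2\Lambda n} n}\,\|b\|^2 .
\]
Hence $\|\Lcfk^n b\|^2 \geq \tfrac12\|b\|^2$ as long as $2\kappa C_0^2 e^{2\Lambda n} n \leq \log\tfrac32$, which holds whenever $e^{2\Lambda n} \leq c/\kappa$, i.e. $n \leq \tfrac{1}{2\Lambda}|\log\kappa| - C$. By definition of $\tau_\kappa$ this gives exactly $\tau_\kappa \geq \tfrac{1}{2\Lambda}|\log\kappa| - C$, as claimed.

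The main obstacle is the frequency-support estimate for $\Lc_f^n b$, i.e. establishing $\|\Lc_f^n b\|_{H^s}\lesssim_s e^{s\Lambda n}$ with the constant $\Lambda$ sharp (or at least no worse than the quantity in \eqref{eq:defnLambda2}): one must argue that differentiating $b\circ f^{-n}$ costs a factor $\|D(f^{-n})\|_{C^0} \leq e^{\Lambda n}$ per derivative, with the higher-derivative (Faà di Bruno) terms absorbed into the $s$-dependent constant since $f^{-n}$ has $C^k$ norms growing at most like $e^{O_k(\Lambda) n}$ — actually, for fractional $s$ it is cleanest to prove it for integer $s$ by the chain rule and then interpolate, noting that only the top-order term carries the sharp $e^{s\Lambda n}$ rate and everything else is lower order in $n$ after taking $\kappa$-th roots, which is all that matters. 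Everything else — the Bernstein step, the diagonal-contraction bound for $e^{\kappa\Delta}$, and the final arithmetic extracting $\tfrac{1}{2\Lambda}|\log\kappa|$ — is routine.
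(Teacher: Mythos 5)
Your overall heuristic --- $L^2$ mass is lost only through the heat semigroup acting on high modes, and high modes are populated at rate at most $e^{\Lambda n}$ --- is exactly the right one, and it is what the paper's proof formalizes. But the specific chain of inequalities you write down has a genuine gap at the middle step,
\[
\|\Pi_{\leq R}\Lcfk^n b\|^2 \;\geq\; e^{-2\kappa R^2 n}\,\|\Pi_{\leq R}\Lc_f^n b\|^2 ,
\]
which is false as stated for a general radius $R$. The operators $e^{\kappa\Delta}$ and $\Lc_f$ do not commute, and $\Lc_f$ mixes Fourier modes, so the low-mode content of $\Lc_f^n b$ at the final time may have passed through wavenumbers $\gg R$ at intermediate times, where each application of $e^{\kappa\Delta}$ damps it by far more than $e^{-\kappa C R^2}$. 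Concretely, if $\Lc_f$ sends $e_{k_0}$ to $e_{k_1}$ with $|k_1|$ huge and then back to a low mode, the left side is $\approx e^{-\kappa c |k_1|^2}$ while the right side is $\approx e^{-2\kappa c R^2}$. You acknowledge that ``the intermediate iterates might leave and re-enter the ball,'' but the sentence that follows does not resolve this: the ``worst factor on $|k|\le R$'' is irrelevant when the intermediate iterates are not supported in $|k|\le R$. To repair the step you would have to control the Fourier tails of \emph{every} intermediate iterate $\Lcfk^j b$, not just the final $\Lc_f^n b$ --- at which point you are effectively redoing, step by step, the energy estimate that the paper uses. (A secondary issue: even granting the middle inequality, your condition $2\kappa C_0^2 e^{2\Lambda n} n \le \log\tfrac32$ yields $\tau_\kappa \ge \tfrac{1}{2\Lambda}|\log\kappa| - O(\log\log\kappa^{-1})$ rather than $-C$, because of the extra factor of $n$; this is avoidable but you should not claim the arithmetic is ``routine.'')

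The paper's route is shorter and avoids both problems. It uses the exact one-step energy identity $\|\Lcfk h\|^2 = \|h\|^2 - \int_0^\kappa \|e^{t\Delta}\Lc_f h\|_{H^1}^2\,dt \ge \|h\|^2 - \kappa \|\Lc_f h\|_{H^1}^2$, together with the one-step bound $\|\Lc_f h\|_{H^1}\le e^{\Lambda}\|h\|_{H^1}$ (a single chain rule plus the volume-preserving change of variables --- no Fa\`a di Bruno, no interpolation, no fractional $s$; only $s=1$ is ever needed, where there are no lower-order terms). Telescoping gives $\|b\|^2 - \|\Lcfk^n b\|^2 \le C\kappa e^{2\Lambda n}$, from which the bound on $\tau_\kappa$ with a genuine additive constant is immediate. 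I would encourage you to rework your argument along these lines: the ``main obstacle'' you identify (the sharp $H^s$ frequency-support estimate for general $s$) is not actually needed, while the step you dismiss as clean is where the real difficulty lives.
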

\begin{proof}[Proof of Lemma \ref{lem:enhancedDissipLower2}]
    To start, for mean-zero $h \in L^2$ we have the standard energy estimate 
    \begin{align}
        \| \Lcfk h\|^2 & = \| h\|^2 - \int_0^\kappa \| e^{t \Delta} \Lc_f h\|_{H^1} \; dt \geq \| h\|^2 - \kappa \| \Lc_f h \|_{H^1}^2 \\ 
        & \geq \| h \|^2 - \kappa e^{2 \Lambda} \| h \|_{H^1}^2 \, .
    \end{align}
    Here, we estimated 
    \begin{align*}
        \| \Lc_f h \|_{H^1}^2 & = \int_{\T^d} | \nabla [h \circ f^{-1}]|^2 dx \\ 
        & = \int_{\T^d} |(D(f^{-1})_{x})^\top [\nabla h(f^{-1} x)]|^2dx \\ 
        & = \int_{\T^d} |(Df_{f^{-1} x})^{-\top} [\nabla h(f^{-1} x)]|^2 dx \\ 
        & = \int_{\T^d} |(Df_{x})^{-\top} [\nabla h(x)]|^2 dx \\ 
        & \leq \left(\sup_{x \in \T^d} | (Df^{-1})_x|\right)^2 \cdot \| h \|_{H^1}^2 = e^{2 \Lambda} \| h \|_{H^1}^2
    \end{align*}
    with $\Lambda$ as in \eqref{eq:defnLambda2}. Note that since $\| e^{\kappa \Delta} \|_{H^s} \leq 1$ for all $s$, it follows that $\| \Lcfk\|_{H^1} \leq e^{\Lambda}$ for all $\kappa$. 

    Bootstrapping, we obtain that 
    \begin{align*}
        \| \Lcfk^n b\|^2 - \| b\|^2 & \geq - \kappa \sum_{j=0}^{n-1} \| \Lcfk^{j} b\|_{H^1}^2 \\ 
        & \geq - \kappa \sum_{j=0}^{n-1} e^{2 j \Lambda} \| b \|_{H^1} \geq - C \kappa e^{2 n \Lambda} 
    \end{align*}
    where $C > 0$ is constant in $n$ and $\kappa$. 

    We now have 
    \[\frac12 \| b\|^2 \geq \| \Lcfk^{\tau_\kappa} b\|^2 \geq \| b\|^2 - C \kappa e^{2 \Lambda \tau_\kappa} \, ,  \]
    hence 
    \[\log \kappa + 2 \Lambda \tau_\kappa\]
    is bounded from below by a constant independent of $\kappa$. This completes the proof. 
\end{proof}

For the lower bound in \eqref{eq:cumLaw2}, let $2 \leq N \leq \kappa^{-1/2}$ be fixed and define 
\[n_E = \frac{1}{\Lambda} \log N \,. \]
Using Lemma \ref{lem:identityForProjection}, we estimate 
\begin{align}
    \E \| \Pi_{\leq N} g_{f, \kappa}\|^2 \geq \sum_{n \leq n_E} \| \Pi_{\leq N} \Lcfk^n b\|^2 = \sum_{n \leq n_E} \| \Lcfk^n b\|^2 - \sum_{n \leq n_E} \| \Pi_{> N} \Lcfk^n b\|^2 \,. 
\end{align}
The latter is estimated by Bernstein's inequality: 
\[\sum_{n \leq n_E} \| \Pi_{> N} \Lcfk^n b\|^2 \leq \sum_{n \leq n_E} N^{-2} \| \Lcfk^n b\|_{H^1}^2 \leq C N^{-2} e^{2 n_E \Lambda} = C\]
where $C > 0$ is a constant independent of $\kappa, N$, and where above we used the bound $\| \Lcfk\|_{H^1} \leq e^\Lambda$. 
For the former term, since $N \leq \kappa^{-1/2}$ we have by Lemma \ref{lem:enhancedDissipLower2} that $n_E \leq \tau_\kappa$, hence
\[\sum_{n < n_E } \| \Lcfk^n b\|^2 \geq \frac12 n_E - \frac12 \, , \]
having used that $\| b \| = 1 $. This implies the desired lower bound in \eqref{eq:cumLaw2}. 

\subsection{Control in the dissipative range}\label{subsec:upperBoundDissipative2}

\newcommand{\pik}{\Pi_{\geq \frac{1}{\sqrt{\kappa}}}}

We finally come to the upper bound in the dissipative range \eqref{eq:controlDissipRange}. We require the following, which is an analogue of the $L^2$ energy balance relation \cite[Proposition 1.2]{bedrossianBatchelorSpectrumPassive2020}. 
\begin{lemma}\label{lem:energyBalance2}
    For any $\kappa > 0$, it holds that
    \[\| b \|^2 = \E \int_0^\kappa \| e^{t \Delta} \Lc_f g_{f, \kappa}\|_{H^1}^2 dt \,. \]
    In particular, 
    \[  \kappa \E \| \Lcfk g_{f, \kappa}\|^2_{H^1} \leq \| b\|^2 \,. 
    \]
\end{lemma}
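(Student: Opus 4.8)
The plan is to derive an $L^2$ energy balance identity for the stationary scalar $g_{f,\kappa}$ by exploiting the series representation \eqref{eq:gfkappaRep2} together with Lemma \ref{lem:identityForProjection}, and then read off the two claimed inequalities. The first identity $\|b\|^2 = \E \int_0^\kappa \|e^{t\Delta}\Lc_f g_{f,\kappa}\|_{H^1}^2\,dt$ should come from a telescoping argument: for a single mean-zero $h \in L^2$, the elementary heat-semigroup computation already used in the proof of Lemma \ref{lem:enhancedDissipLower2} gives
\[
\|\Lc_{f,\kappa} h\|^2 = \|h\|^2 - \int_0^\kappa \|e^{t\Delta}\Lc_f h\|_{H^1}^2\,dt,
\]
using that $\|\Lc_f h\| = \|h\|$ (volume preservation) and that $\frac{d}{dt}\|e^{t\Delta}w\|^2 = -2\|e^{t\Delta}w\|_{H^1}^2$ for the heat flow. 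Summing this over $h = \Lc_{f,\kappa}^n b$, $n = 0, 1, \dots, M-1$, telescopes to
\[
\|b\|^2 - \|\Lc_{f,\kappa}^M b\|^2 = \sum_{n=0}^{M-1}\int_0^\kappa \|e^{t\Delta}\Lc_f \Lc_{f,\kappa}^n b\|_{H^1}^2\,dt,
\]
and since $\|\Lc_{f,\kappa}^M b\| \to 0$ as $M \to \infty$ (because $\|\Lc_{f,\kappa}\| < 1$ for $\kappa > 0$), letting $M \to \infty$ gives $\|b\|^2 = \sum_{n=0}^\infty \int_0^\kappa \|e^{t\Delta}\Lc_f \Lc_{f,\kappa}^n b\|_{H^1}^2\,dt$.

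The remaining step is to recognize the right-hand side as $\E \int_0^\kappa \|e^{t\Delta}\Lc_f g_{f,\kappa}\|_{H^1}^2\,dt$. This is exactly an application of Lemma \ref{lem:identityForProjection} — or rather its proof — with the bounded operator $A = e^{t\Delta}\Lc_f$, which maps $L^2$ to $L^2$ with norm $\leq 1$ for each fixed $t > 0$: expanding $g_{f,\kappa} = \sum_n \eta_n \Lc_{f,\kappa}^n b$, taking $\|e^{t\Delta}\Lc_f g_{f,\kappa}\|_{H^1}^2$, and using $\E[\eta_m\eta_n] = \delta_{mn}$ kills the cross terms and yields $\E\|e^{t\Delta}\Lc_f g_{f,\kappa}\|_{H^1}^2 = \sum_n \|e^{t\Delta}\Lc_f \Lc_{f,\kappa}^n b\|_{H^1}^2$. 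Integrating in $t$ over $[0,\kappa]$ and invoking Fubini/Tonelli (everything is nonnegative, so this is unproblematic) matches the telescoped sum. One should double-check that $\|\cdot\|_{H^1}$ here is the homogeneous seminorm — fine, since $b$ and all its iterates are mean-zero — and that interchanging the infinite sum with the $t$-integral is justified; both are immediate by monotone convergence.

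For the ``in particular'' clause: since $e^{t\Delta}$ is a contraction on every homogeneous Sobolev space, $\|e^{t\Delta}w\|_{H^1} \geq \|e^{\kappa\Delta}w\|_{H^1}$ for $t \leq \kappa$ would go the wrong way, so instead one uses monotonicity of $t \mapsto \|e^{t\Delta}w\|_{H^1}$, which is \emph{decreasing}, giving $\|e^{t\Delta}\Lc_f g_{f,\kappa}\|_{H^1} \geq \|e^{\kappa\Delta}\Lc_f g_{f,\kappa}\|_{H^1} = \|\Lc_{f,\kappa}g_{f,\kappa}\|_{H^1}$ for all $t \in [0,\kappa]$. Hence $\int_0^\kappa \|e^{t\Delta}\Lc_f g_{f,\kappa}\|_{H^1}^2\,dt \geq \kappa \|\Lc_{f,\kappa}g_{f,\kappa}\|_{H^1}^2$, and taking $\E$ and combining with the identity gives $\kappa\,\E\|\Lc_{f,\kappa}g_{f,\kappa}\|_{H^1}^2 \leq \|b\|^2$, as claimed.

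I do not anticipate a serious obstacle here; the only point requiring a modicum of care is the exchange of expectation, infinite summation, and the $t$-integral, all of which are licensed by nonnegativity (Tonelli) once one knows $g_{f,\kappa} \in L^2$ almost surely, which is part (i) of Theorem \ref{thm:cumLaw2}. The monotonicity of the $H^1$-norm along the heat flow — a one-line spectral computation, $\frac{d}{dt}\|e^{t\Delta}w\|_{H^1}^2 = -2\|e^{t\Delta}w\|_{H^2}^2 \leq 0$ — is the small ingredient needed for the second assertion and should be stated explicitly.
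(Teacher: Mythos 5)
Your proof is correct and fills in exactly the argument the paper leaves implicit: the pointwise energy identity from the first line of the proof of Lemma \ref{lem:enhancedDissipLower2}, telescoped over the series representation \eqref{eq:gfkappaRep2} and converted to the expectation via the orthogonality argument of Lemma \ref{lem:identityForProjection} (applied with the $H^1$ inner product), followed by monotonicity of $t \mapsto \|e^{t\Delta}w\|_{H^1}$ for the second assertion, which is precisely the paper's route. The only blemish is the self-contradictory sentence in your last paragraph: the inequality $\|e^{t\Delta}w\|_{H^1} \geq \|e^{\kappa\Delta}w\|_{H^1}$ for $t \leq \kappa$ does \emph{not} ``go the wrong way'' --- it is exactly the semigroup-contractivity bound you then re-derive and correctly use.
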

\begin{proof}[Proof of Lemma \ref{lem:energyBalance2}]
    The energy balance equality follows from the semigroup property of $(e^{\Delta t})$ and is essentially the first line in the proof of Lemma \ref{lem:enhancedDissipLower2}. The upper bound on $\kappa \E \| \Lcfk g_{f, \kappa}\|_{H^1}^2$ is then immediate from the fact that $ \| e^{t \Delta} \Lc_f g_{f, \kappa}\|_{H^1}^2 \geq \| e^{\kappa \Delta} \Lc_f g_{f, \kappa}\|_{H^1}^2  $ for all $t\in[0,\kappa].$
\end{proof}



To complete the proof of \eqref{eq:controlDissipRange}, a modification of the argument for Lemma \ref{lem:identityForProjection} implies 
\[\E \| \pik g_{f, \kappa}\|^2 = \E \| \pik \Lcfk g_{f, \kappa}\|^2 + \| \pik b \|^2 \,. \]
Using Bernstein and the energy upper bound from Lemma \ref{lem:energyBalance2}, we conclude that 
\[\E \| \pik g_{f, \kappa}\|^2 \leq \kappa \E \| \Lcfk g_{f, \kappa}\|_{H^1}^2 + \| b \|^2 \leq 2 \| b \|^2 \, . \]
As the RHS is independent of $\kappa$, the proof is complete. 


\subsection{Further discussion}\label{subsec:discussion2}




We close this section with a discussion on the mismatch between the upper and lower bounds in the cumulative law \eqref{eq:cumLaw2}.

To start, we address the general relationship between the values $\Lambda, \gamma$ appearing there. 

\begin{lemma}\label{lem:compareGammaLambda2}
    Assume that $\kappa$-uniform $H^{-1}$ decay holds at rate $\gamma > 0$. Then, at $\kappa = 0$ one has 
    \[\gamma \leq \liminf_n \frac1n \log \| \Lc^n_f h\|_{H^1} \]
    for all $h$ with mean zero, not identically equal to zero. 
    In particular, $\gamma \leq \log \sup_x | (Df_x)^{-1}| = \Lambda$. 
\end{lemma}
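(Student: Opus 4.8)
The key structural facts are that $\mathcal{L}_f$ is an isometry of $L^2$ on mean-zero functions (since $f$ preserves volume), that it maps smooth mean-zero functions to smooth mean-zero functions, and that at $\kappa = 0$ one has $\mathcal{L}_{f,0} = \mathcal{L}_f$, so the hypothesis of $\kappa$-uniform $H^{-1}$ decay applies directly to the iterates $\mathcal{L}_f^n h$. The engine of the proof is the elementary interpolation inequality
\[
\| \varphi \|_{L^2}^2 \leq \| \varphi \|_{H^1} \, \| \varphi \|_{H^{-1}}
\]
valid for any mean-zero $\varphi \in H^1$, which follows from Cauchy--Schwarz applied termwise to $\sum_{k \neq 0} |\hat\varphi(k)|^2 = \sum_{k \neq 0} \big( |k|\,|\hat\varphi(k)| \big)\big( |k|^{-1}|\hat\varphi(k)| \big)$.

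First I would apply this with $\varphi = \mathcal{L}_f^n h$. Since $\|\mathcal{L}_f^n h\|_{L^2} = \|h\|_{L^2}$ and, by Definition \ref{defn:kappaUnifH-1Decay2} at $\kappa = 0$, $\|\mathcal{L}_f^n h\|_{H^{-1}} \leq C_h e^{-\gamma n}$, the interpolation inequality rearranges to
\[
\| \mathcal{L}_f^n h \|_{H^1} \geq \frac{\|h\|_{L^2}^2}{C_h} \, e^{\gamma n} .
\]
Taking $\tfrac1n \log$ and then $\liminf_{n\to\infty}$, the constant factor $\|h\|_{L^2}^2 / C_h$ (which is strictly positive because $h \not\equiv 0$) contributes nothing to the limit, yielding $\liminf_n \tfrac1n \log \| \mathcal{L}_f^n h \|_{H^1} \geq \gamma$, the first assertion.

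For the ``in particular'' clause, I would reuse the computation already carried out in the proof of Lemma \ref{lem:enhancedDissipLower2}, which shows $\| \mathcal{L}_f \varphi \|_{H^1} \leq e^{\Lambda} \| \varphi \|_{H^1}$ with $\Lambda = \log \sup_x |(Df_x)^{-1}|$; iterating gives $\| \mathcal{L}_f^n h \|_{H^1} \leq e^{n\Lambda} \| h \|_{H^1}$, so $\tfrac1n \log \| \mathcal{L}_f^n h \|_{H^1} \leq \Lambda + \tfrac1n \log \|h\|_{H^1}$ and hence $\liminf_n \tfrac1n \log \| \mathcal{L}_f^n h \|_{H^1} \leq \Lambda$ (finiteness of $\|h\|_{H^1}$ being guaranteed by smoothness of $h$). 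Chaining the two inequalities gives $\gamma \leq \Lambda$.

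There is no real obstacle here: the only points requiring a line of care are verifying that $\mathcal{L}_f$ preserves the mean-zero, smoothness, and $L^2$-norm properties needed to apply the interpolation inequality to the iterates, and noting that the hypothesis is assumed for all $\kappa \geq 0$ so that the $\kappa = 0$ case is genuinely available. The conceptual content is simply that $L^2$-isometry converts the assumed $H^{-1}$ contraction into forced $H^1$ expansion at the reciprocal exponential rate, and $H^1$ expansion is a priori bounded by $\Lambda$.
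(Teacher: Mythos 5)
Your proposal is correct and takes essentially the same route as the paper: the $L^2$ interpolation inequality $\|\varphi\|^2 \leq \|\varphi\|_{H^1}\|\varphi\|_{H^{-1}}$ applied to $\varphi = \mathcal{L}_f^n h$, combined with the $L^2$-isometry of $\mathcal{L}_f$ and the assumed $H^{-1}$ decay at $\kappa = 0$, followed by the $H^1$-growth bound $\|\mathcal{L}_f\|_{H^1} \leq e^{\Lambda}$ from the computation in Lemma \ref{lem:enhancedDissipLower2} for the final clause. You have simply spelled out the ``rearranging terms'' step that the paper leaves implicit.
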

\begin{proof}
    By a standard interpolation inequality, 
    \[\| h \|^2 = \| \Lc_f h\|^2 \leq \| \Lc_f^n h\|_{H^1} \cdot \| \Lc_f^n h\|_{H^{-1}} \, . \]
    We recall that \(
\gamma\leq\liminf_{n\to\infty}\left(-\frac{1}{n}\log\left\Vert \mathcal{L}_{f,\kappa}^{n}h\right\Vert_{H^{-1}}\right)
\). The proof is complete on plugging in $\kappa$-uniform $H^{-1}$ decay, rearranging terms, and taking $n \to \infty$. 
\end{proof}

It is not surprising that there is some general inequality between $\gamma$ and $\Lambda$. Indeed, decay of the $H^{-1}$ norm implies the evacuation of \emph{all} $L^2$-mass of the scalar at low modes, since for mean-zero $h \in L^2$ one has 
\[\| \Pi_{\leq N} h \|^2 \leq N^{2} \| h \|^2_{H^{-1}} \,, \]
while, growth of the $H^1$ norm implies that \emph{some} of the $L^2$ mass of the scalar is migrating to higher modes, due to the weighting in the definition of $H^1$. 
On the other hand, \emph{upper bounds} on the $H^1$ norm \emph{prevent} the accumulation of mass in the high modes, since 
\[\| \Pi_{> N} h \|^2 \leq N^{-2} \| h\|^2_{H^1} \,. \]

This perspective allows us to interpret the values the upper and lower bounds in \eqref{eq:cumLaw2}. Following Theorem \ref{thm:cumLaw2}(i), we can view $g_{f, \kappa}$ as a superposition of the time-$n$ ``packets'' of scalar $\Lcfk^n b$. Each packet $b_n := \Lcfk^n b$ has $L^2$ mass distributed in some way over the possible modes $k \in \Z^d$. At each fixed wavelength $N$, the proof of the upper bound in \eqref{eq:cumLaw2} treats $\Pi_{\leq N} b_n$ as negligible for $n > n_L$, where $n_L$ is approximately $\frac{2}{\gamma} \log N$. Put another way, this says that the Fourier mass of the time-$n$ packet $b_n$ is concentrated \emph{above} wavelengths $\approx e^{n \gamma / 2}$. Similarly, the proof of the lower bound in \eqref{eq:cumLaw2} treats $\Pi_{> N} b_n$ as negligible for $n < n_E = \frac{1}{\Lambda} \log N$, i.e., the Fourier mass of $b_n$ is concentrated \emph{below} wavelengths $\approx e^{n \Lambda}$.


This discussion is relevant to the validity of a version of Batchelor's law summed over exponentially-spaced shells as in Theorem \ref{thm:expShellLawIntro}, i.e., 
\[\E \| \Pi_{[L^\ell, L^{\ell + 1}]} g_{f, \kappa} \|^2 \approx_L 1 \,. \]
If one could match these `inner' and `outer' propagation speeds, i.e., the timescales $n_E, n_L$ as above, such an exponentially-spaced law would follow. However, it seems unlikely to make such an argument work for general systems, since the `upper' and `lower' propagation speeds come from essentially distinct mechanisms. 

On the other hand, the above heuristic does not rule out the possibility of a `universal' exponentially-spaced Batchelor's law. It merely suggests that proving such a law would require controlling cancellations between a growing number of time-$n$ packets $b_n$. The authors feel that this is a compelling issue meriting future study.

\section{Angular sparsity  for perturbations of CAT maps}\label{sec:sectorial}



Over the last several decades there has been developed an abstract framework for the quantitative study of \emph{correlation decay} for suitable smooth mappings $f$, which roughly speaking refers to the tendency of long-time trajectories of chaotic dynamical systems to gradually `forget' the initial condition. This relates directly to $H^{-1}$ decay: if $f : \T^d \to \T^d$ is a diffeomorphism preserving volume, $X$ is a random initial condition in $\T^d$ distributed like Lebesgue measure, and $\varphi, \psi : \T^d \to \R$ are real-valued observables, then the correlation coefficient of the pair $\psi(f^n(X))$ and $\varphi(X)$  is proportional, up to an $n$-independent constant, to 
\[\int \psi(f^n x) \cdot \varphi(x) \, dx - \int \psi \int \varphi \,. \]
Up to a change of variables, the decay of this quantity is clearly tied to $H^{-1}$ decay of $\varphi \circ f^{-n}$ when $\psi \in H^1$ and in the special case when both $\psi, \varphi$ have mean zero. 

The plan for this section is as follows. In Section \ref{subsec:decayMechanism3} we recall elements of the spectral theory of correlation decay and record a spectral-type sufficient condition for $\kappa$-uniform $H^{-1}$ decay of a volume-preserving mapping $f : \T^d \to \T^d$. 

The key tool here is the use of spaces of ``mixed regularity'' in different tangent directions in state space corresponding to the directions of stretching and compression, the class of so-called \emph{anisotropic Banach spaces}. 
In Section \ref{subsec:catMap3} we present the Arnold CAT map and small perturbations thereof as a worked example (Theorem \ref{thm:specPic3}), showing how the form of the anisotropic space implies the stratification estimate in Theorem \ref{thm:sectorialIntro}. 
Theorem \ref{thm:specPic3} is proved in Section \ref{subsec:proofs3} and Appendix \ref{Jezequel}.

\subsection{Overview of spectral theory for transfer operators}\label{subsec:decayMechanism3}

\newcommand{\Cov}{\operatorname{Cov}}





\newcommand{\rhoess}{\rho_{\rm ess}}

     Let $f : \T^d \to \T^d$ be a $C^\infty$ diffeomorphism, which as earlier will be assumed to preserve volume throughout. 
     As in the pulsed-diffusion model, of interest is the \emph{transfer operator} $\Lc_f$, which acts for measurable $\varphi : \T^d \to \R$ as \[\Lc_f[\varphi](x) = \varphi \circ f^{-1}\, . \]
     
     \begin{assumption}\label{ass:quasicompact3}
    There exists a norm $\| \cdot \|_s$ on $C^\infty$ functions $\T^d \to \R$ with the following properties. Below, $\Bc_s$ denotes the completion of $C^\infty$ with respect to the norm $\| \cdot \|_s$. 
    \begin{itemize}
        \item[(1)] $\Lc_f$ extends to a bounded linear operator of $(\Bc_s, \| \cdot \|_s)$ into itself. 
        \item[(2)] With $\rhoess(\Lc_f ; \Bc_s)$ being the essential spectral radius of $\Lc_f$ viewed as an operator on $\Bc_s$, it holds that 
        \[\rhoess(\Lc_f; \Bc_s) < 1 \,. \]
    \end{itemize}
    \end{assumption}
    
    Here $\| \cdot \|_s$ is the strong norm in the theory of spectral perturbation as in Section \ref{subsubsec:abstractKellerLiverani}.

     The following is a general spectral criterion for $\kappa$-uniform $H^{-1}$ decay. 
     \begin{proposition}\label{prop:suffCondH-kMixing}
        Let Assumption \ref{ass:quasicompact3} hold for a norm $\| \cdot\|_s$ with the additional property that there exist $q \geq \frac12, C \geq 1$ for which
        \begin{align}\label{eq:compatibleSobolev3} C^{-1} \| \varphi \|_{H^{-q}} \leq \| \varphi\|_s \leq C \| \varphi\|_{H^q}\end{align}
        for all smooth $\varphi$. 
        Additionally, assume that
        \[\sigma(\Lc_f; \Bc_s) \setminus B_r(0) = \{ 1 \}\]
        for some $r \in (\rhoess(\Lc_f; \Bc_s), 1)$, 
        where $\sigma(\Lc_f ; \Bc_s)$ refers to the spectrum of $\Lc_f : \Bc_s \to \Bc_s$, and where 1 is a simple eigenvalue with eigenfunction ${\bf 1} : \T^d \to \R$ identically equal to 1. 
        
        Then, $f$ exhibits $\kappa$-uniform $H^{-1}$ decay as in Definition \ref{defn:kappaUnifH-1Decay2}
    \end{proposition}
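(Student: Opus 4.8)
\emph{Plan of proof.} The plan is to prove the slightly stronger assertion of $\kappa$-uniform $H^{-q}$ decay and then pass to $H^{-1}$ decay by soft arguments, handling large and small diffusivities separately. First I would record the reduction. For mean-zero $h$ the operator $\Lc_{f,\kappa}=e^{\kappa\Delta}\Lc_f$ is an $L^2$-contraction, since $\Lc_f$ is an $L^2$-isometry by volume preservation and $\|e^{\kappa\Delta}\|_{L^2\to L^2}\le 1$; hence $\|\Lc_{f,\kappa}^n h\|_{L^2}\le\|h\|_{L^2}$ for all $n,\kappa$. If $q\le 1$ then $\|\cdot\|_{H^{-1}}\le\|\cdot\|_{H^{-q}}$ on mean-zero functions and an $H^{-q}$ bound suffices directly; if $q>1$, the interpolation inequality $\|\varphi\|_{H^{-1}}\le\|\varphi\|_{L^2}^{1-1/q}\|\varphi\|_{H^{-q}}^{1/q}$ together with the uniform $L^2$ bound reduces the claim to an $H^{-q}$ estimate, at the cost of replacing the rate $\gamma_0$ by $\gamma_0/q$. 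So it is enough to produce $\gamma_0>0$ and, for each mean-zero smooth $h$, a constant $C_h$ with $\|\Lc_{f,\kappa}^n h\|_{H^{-q}}\le C_h e^{-\gamma_0 n}$ for all $\kappa\ge 0$, $n\ge 1$.

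\emph{Large diffusivity and the base case.} Fix a small threshold $\kappa_0>0$ to be chosen. For $\kappa\ge\kappa_0$ the argument is elementary: on mean-zero functions $e^{\kappa\Delta}$ contracts $L^2$ by the factor $e^{-4\pi^2\kappa}\le e^{-4\pi^2\kappa_0}$ (the smallest nonzero frequency has $|k|\ge 1$), while $\Lc_f$ is an $L^2$-isometry, so $\|\Lc_{f,\kappa}^n h\|_{H^{-q}}\le\|\Lc_{f,\kappa}^n h\|_{L^2}\le e^{-4\pi^2\kappa_0 n}\|h\|_{L^2}$, with no reference to $\Bc_s$. For $\kappa=0$: by Assumption \ref{ass:quasicompact3} and the spectral hypothesis, $1$ is a simple isolated eigenvalue of $\Lc_f$ on $\Bc_s$ and $\sigma(\Lc_f;\Bc_s)\setminus\{1\}\subset B_r(0)$; since $f$ preserves volume, $\Lc_f\mathbf 1=\mathbf 1$ and $\int\Lc_f\varphi=\int\varphi$, so the rank-one spectral projection is $P\varphi=(\int_{\T^d}\varphi)\,\mathbf 1$. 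Writing $\Lc_f=P+R$ with $R=\Lc_f(\Id-P)$ of spectral radius $\le r$, we get $\|\Lc_f^n h\|_{\Bc_s}=\|R^n h\|_{\Bc_s}\le C\rho^n\|h\|_{\Bc_s}$ for any fixed $\rho\in(r,1)$ and all mean-zero $h$; applying \eqref{eq:compatibleSobolev3} on both sides yields $\|\Lc_f^n h\|_{H^{-q}}\le C\|\Lc_f^n h\|_{\Bc_s}\le C\rho^n\|h\|_{\Bc_s}\le C\rho^n\|h\|_{H^q}$ for mean-zero smooth $h$.

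\emph{Small diffusivity $0<\kappa<\kappa_0$.} Here I would invoke the abstract spectral-stability theorem of Keller and Liverani \cite{kellerStabilitySpectrumTransfer1999} (see Section \ref{subsubsec:abstractKellerLiverani}) for the family $\{\Lc_{f,\kappa}\}$ regarded as perturbations of $\Lc_{f,0}=\Lc_f$ on $\Bc_s$, with weak norm $\|\cdot\|_w:=\|\cdot\|_{H^{-q-\delta}}$ for a small fixed $\delta\in(0,2)$; note $\Bc_s\hookrightarrow H^{-q}\hookrightarrow H^{-q-\delta}$, the last inclusion being compact by Rellich. The three ingredients to verify are: (a) uniform boundedness $\sup_{0\le\kappa\le\kappa_0}\|\Lc_{f,\kappa}\|_{\Bc_s\to\Bc_s}<\infty$, which reduces to uniform boundedness of $e^{\kappa\Delta}$ on $\Bc_s$ — this holds because $e^{\kappa\Delta}$ is the radial Fourier multiplier with symbol $e^{-4\pi^2\kappa|k|^2}$, satisfying Mikhlin–Hörmander bounds $|k|^{|\alpha|}|\partial_k^\alpha e^{-4\pi^2\kappa|k|^2}|\le C_\alpha$ uniformly in $\kappa$, and the anisotropic norm $\|\cdot\|_s$ of Baladi–Tsujii / Faure–Roy–Sjöstrand type is built from a conic Littlewood–Paley decomposition on which such symbols act boundedly; (b) a uniform Lasota–Yorke inequality $\|\Lc_{f,\kappa}^n\varphi\|_{\Bc_s}\le C_1\rho^n\|\varphi\|_{\Bc_s}+C_2\|\varphi\|_w$ for all $\kappa\in[0,\kappa_0]$, coming from the quasi-compactness of $\Lc_f$ on $\Bc_s$ (Assumption \ref{ass:quasicompact3}(2), via the Hennion–Nussbaum characterization) together with (a) and the $L^2$-contractivity of $e^{\kappa\Delta}$ on $\Bc_w$; and (c) strong-to-weak smallness of the perturbation, $\|\Lc_{f,\kappa}-\Lc_f\|_{\Bc_s\to\Bc_w}=\|(e^{\kappa\Delta}-\Id)\Lc_f\|_{\Bc_s\to H^{-q-\delta}}\lesssim\big(\sup_{k\neq 0}|k|^{-\delta}|e^{-4\pi^2\kappa|k|^2}-1|\big)\lesssim\kappa^{\delta/2}$, which tends to $0$ as $\kappa\to0$ (using boundedness of $\Lc_f$ on $H^{-q}$ and \eqref{eq:compatibleSobolev3}). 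Granting (a)--(c), Keller–Liverani produces $\kappa_0>0$ and $r'\in(r,1)$ such that, uniformly in $\kappa\in[0,\kappa_0]$, $\sigma(\Lc_{f,\kappa};\Bc_s)\setminus B_{r'}(0)$ is the single simple eigenvalue $1$: indeed $\Lc_{f,\kappa}\mathbf 1=\mathbf 1$ always, so $1$ is an eigenvalue, and the Keller–Liverani convergence of spectral projections forces the spectral projection attached to $\sigma\setminus B_{r'}(0)$ to have rank $1$ for $\kappa$ small. Hence the decomposition of the base case persists uniformly, $\Lc_{f,\kappa}^n h=R_\kappa^n h$ with $\|R_\kappa^n\|_{\Bc_s\to\Bc_s}\le C(\rho')^n$, $\rho'\in(r',1)$; applying \eqref{eq:compatibleSobolev3} once more gives $\|\Lc_{f,\kappa}^n h\|_{H^{-q}}\le C(\rho')^n\|h\|_{H^q}$ for mean-zero smooth $h$, uniformly in $\kappa\in[0,\kappa_0]$.

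\emph{Conclusion and main obstacle.} Combining the three regimes with $\gamma_0:=\min\{4\pi^2\kappa_0,\,|\log\rho|,\,|\log\rho'|\}$ gives $\kappa$-uniform $H^{-q}$ decay with $C_h$ of size $O(\|h\|_{H^q})$, and the reduction of the first paragraph upgrades this to $\kappa$-uniform $H^{-1}$ decay at rate $\gamma=\gamma_0/\max\{1,q\}$, as required by Definition \ref{defn:kappaUnifH-1Decay2}. The main obstacle is items (a) and (b) of the small-diffusivity step: making precise that $e^{\kappa\Delta}$ acts on $\Bc_s$ boundedly uniformly in $\kappa$, and that a Lasota–Yorke inequality survives this perturbation uniformly. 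Both depend on the concrete description of the anisotropic norm $\|\cdot\|_s$, and I would carry them out in the appendix following the Baladi–Tsujii and Faure–Roy–Sjöstrand constructions cited in Section \ref{subsec:references}.
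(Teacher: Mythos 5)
Your reduction from $H^{-1}$ to $H^{-q}$ decay, your treatment of $\kappa=0$, and your disposal of large $\kappa$ are all fine, but the small-diffusivity step --- which is the entire content of the proposition --- has a genuine gap. You propose to run Keller--Liverani for the family $\{\Lc_{f,\kappa}\}_{0\le\kappa\le\kappa_0}$ on $\Bc_s$, and this requires your items (a) (uniform boundedness of $e^{\kappa\Delta}$ on $\Bc_s$) and (b) (a uniform Lasota--Yorke inequality for the composed operators $\Lc_{f,\kappa}=e^{\kappa\Delta}\Lc_f$). Neither follows from the hypotheses of the proposition, which concern only the single operator $\Lc_f$ at $\kappa=0$ together with the Sobolev sandwich \eqref{eq:compatibleSobolev3}: the abstract norm $\|\cdot\|_s$ in Assumption \ref{ass:quasicompact3} is not assumed to carry any conic Littlewood--Paley structure on which Mikhlin multipliers act boundedly. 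Moreover (b) does not follow from (a) plus $L^2$-contractivity of $e^{\kappa\Delta}$ as you suggest: in $\Lc_{f,\kappa}^n$ the heat semigroups are interleaved with $\Lc_f$, so a Lasota--Yorke inequality for $\Lc_f^n$ cannot simply be multiplied through; one must redo the Lasota--Yorke argument for the composed dynamics. In this paper, establishing precisely (a)--(b) for the concrete spaces $\Hc^p$ is the content of Proposition \ref{prop:uniformLasotaYorke3} and Theorem \ref{thm:specPic3}(d), i.e.\ all of Appendix \ref{Jezequel}. Deferring these steps ``to the appendix'' therefore means your argument assumes the hardest result in the paper and is no longer a proof of the abstract criterion as stated.

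The gap is avoidable, because the intended proof never perturbs the spectral picture on $\Bc_s$ at all. It uses the telescoping (Duhamel) identity
\[
\Lc_{f,\kappa}^n=\Lc_f^n+\sum_{i=0}^{n-1}\Lc_{f,\kappa}^{i}\,\big(\Lc_{f,\kappa}-\Lc_f\big)\,\Lc_f^{\,n-i-1},
\]
estimated entirely in the mixed chain $H^{q}\to H^{q}\to H^{-q}\to H^{-q}$: the leading term is controlled by the $\kappa=0$ spectral decay on $\Bc_s$ combined with \eqref{eq:compatibleSobolev3}, while each error term uses only boundedness of $\Lc_f$ (and hence of $\Lc_{f,\kappa}$) on $H^{\pm q}$ together with the elementary smoothing bound $\|e^{\kappa\Delta}-I\|_{H^{q}\to H^{-q}}\lesssim\sqrt{\kappa}$ --- this is exactly where the hypothesis $q\ge\tfrac12$ enters, a hypothesis your argument never invokes, which is a sign you are off the intended path. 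One obtains $\|\Lc_{f,\kappa}^{n}b\|_{H^{-q}}\le C_r r^n\|b\|_{H^{q}}+n\kappa C_q^{n}\|b\|_{H^{q}}$, hence a definite contraction from $H^{q}$ to $H^{-q}$ at a fixed time $n_0$ uniformly over small $\kappa$, which is then iterated and interpolated down to $H^{-1}$. I would keep your large-$\kappa$ and interpolation reductions but replace the Keller--Liverani step with this expansion.
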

     \begin{proof}
        The spectral picture given in the hypotheses implies that for any $r \in (\rhoess(\Lc_f; \Bc_s), 1)$, there exists $C_r > 0$, perhaps depending on $r$, such that \[\| \Lc_f^n \varphi \|_s \leq C_r r^n \| \varphi\|_s\] for all smooth $\varphi$ with mean zero and for all $n \geq 1$. Observe also that $\Lc_f$ extends to a bounded linear operator on each of $H^q, H^{-q}$, and that $\| \Lcfk\|_{H^q} \leq \| \Lc_f\|_{H^q}, \| \Lcfk\|_{H^{-q}} \leq \| \Lc_f\|_{H^{-q}}$. Finally, recall the general estimate 
        \begin{align}\label{eq:heatSemigroupEst}\| e^{\kappa \Delta} - I\|_{H^{s+1} \to H^{s}} \leq C \sqrt{\kappa} \quad \text{ for all } \quad s \in \R \, ,  \end{align}
        with $C > 0$ a constant, 
        which implies immediately that $\| e^{\kappa \Delta} - I\|_{H^{q} \to H^{-q}} \leq \sqrt{\kappa}$ on setting $s = -1/2$, using that $q \geq 1/2$. 
        
        We now develop an $H^{-q}$ decay estimate for $\Lcfk$. Let $b \in C^\infty$ be fixed and with mean zero. It holds for all $n \geq 1$ that
        \begin{align*}
            \Lcfk^n & = \Lc_f^n + \sum_{0}^{n-1} \Lcfk^{i} (\Lcfk - \Lc_f) \Lc_f^{n-i-1} \,, \qquad \text{hence} \\ 
            \| \Lcfk^n b \|_{H^{-q}} & \leq \| \Lc_f^n b\|_{H^{-q}} + \sum_0^{n-1} \| \Lcfk^{i} (\Lcfk - \Lc_f) \Lc_f^{n-i-1} b \|_{H^{-q}} \\ 
            & \leq \| \Lc_f^n b\|_s + \sum_0^{n-1} \| \Lc_f\|_{H^{-q}}^i \| e^{\kappa \Delta} - I \|_{H^q \to H^{-q}} \| \Lc_f\|_{H^q}^{n-i} \| b \|_{H^q} \\ 
            & \leq C_r r^n \| b \|_{H^q} + n \kappa C^n_q \| b \|_{H^q} \,. 
        \end{align*}
        Here, $C_q \geq 1$ depends only on $q$. 
        
        Fix $n_0$ such that $C_r r^{n_0} < \frac13$. It follows that for $\kappa < (3 n_0 C^{n_0}_q)^{-1} =: \kappa_0$ we have the estimate $\| \Lcfk^{n_0} b\|_{H^{-q}} \leq \frac23 \| b \|_{H^q}$. Since $\Lcfk$ is bounded on $H^q$, it follows that there exist $C \geq 1, \gamma_q > 0$ such that 
        \[\| \Lcfk^n b\|_{H^{-q}} \leq C e^{- \gamma_q n} \| b \|_{H^q}\]
        holds for all $n \geq 1$, uniformly over $\kappa \in [0,\kappa_0]$. 

        It remains to pass from an $H^{-q}$ estimate to an estimate in $H^{-1}$. This follows from a standard mollification argument -- see, e.g., \cite[Lemma 7.1]{bedrossianAlmostsureExponentialMixing2019}. 
    \end{proof}
     Assumption \ref{ass:quasicompact3} is sometimes referred to as \emph{quasicompactness}, and is a stringent condition on $\| \cdot \|_s$ requiring fine control of the dynamics of $f$, specifically the expansion-contraction mechanism (a.k.a. hyperbolicity) responsible for correlation decay for smooth dynamics $f$. Roughly speaking, quasicompactness requires that $\| \cdot \|_s$ assign positive regularity along unstable directions and negative regularity along stable directions. Due to mix of regularities, such norms are often referred to as \emph{anisotropic}. 
     There is a long literature constructing such spaces when realization of expansion-contraction is \emph{uniform} across state space, as it is for the Arnold CAT map and other so-called \emph{Anosov diffeomorphisms}. For further discussion, see, e.g., the books \cite{baladi2000positive, demersTransferOperatorsHyperbolic2021}. 

     Particularly useful for us in the study of the distribution of Fourier mass of passive scalars will be classes of \emph{Sobolev-like} anisotropic norms developed using tools of semiclassical analysis: see, e.g., \cite{baladiAnisotropicHolderSobolev2007, faure2006ruelle, faure2008semi} in the case of uniformly hyperbolic diffeomorphisms. Our approach borrows from several sources, including \cite[Section 3.3]{demersTransferOperatorsHyperbolic2021} and \cite{faure2008semi}. 
     


\subsection{Anisotropic spaces for CAT maps}\label{subsec:catMap3}

\begin{definition} \label{defn:f_map}
    Let $A$ be any $2 \times 2$ matrix with integer entries and determinant $\pm 1$, and let $f_A(x) = A x$ mod 1 denote the corresponding mapping $\T^2 \to \T^2$ as in Section \ref{sec:intro}. We say that $A$ gives rise to a \emph{hyperbolic CAT map} $f_A$ if no eigenvalues of $A$ lie on the unit circle. In particular, it is easy to show that $A$ must admit two distinct, real, irrational eigenvalues with absolute values $\lambda, \lambda^{-1}$, respectively for some $\lambda > 1$.  

\end{definition}

We now set about defining a norm $\| \cdot \|_s$ tailored to the dynamics of $f_A$ (and mappings $f : \T^2 \to \T^2$ which are close to $f_A$) so that the quasicompactness condition (Assumption \ref{ass:quasicompact3}) is satisfied. Our presentation here is an adaptation of known methods for using microlocal analysis to define anisotropic norms-- that used here is an adaptation of \cite[Section 3.3]{demersTransferOperatorsHyperbolic2021}. 

\bigskip

\noindent {\bf Notation.} Let $v^u_T, v^s_T$ denote unit-vector eigenvectors of $A^{T}$ corresponding to eigenvalues outside and inside the unit circle, respectively. Define the cone 
\begin{align} \label{defn:uCone3}\Cc_u = \{ a^u v^u_T + a^s v^s_T : |a^s| \leq |a^u|\} \subset \R^2 \end{align}
of vectors roughly parallel to $v^u_T$, and $\Cc_s = (\Cc_u)^c$.  


For $\varphi : \T^d \to \R$ smooth and $p > 0$ fixed, our desired norm $\| \cdot\|_s = \| \cdot \|_{\Hc^p}$ will take the form
\[\| \varphi\|_{\Hc^p}^2 = | \hat \varphi(0)|^2 + \sum_{k \in \Cc_u \setminus \{ 0 \}} |k|^{2 p} | \hat \varphi(k)|^2 + \sum_{k \notin \Cc_u \setminus \{ 0 \}} |k|^{-2 p} | \hat \varphi(k)|^2 \, .  \]

Before continuing, we emphasize the two following points. 
\begin{itemize}
    \item[(i)] The norm $\| \cdot\|_{\Hc^p}$ assigns positive regularity for wavenumbers in the cone $\Cc_u$ and negative regularity for those in $\Cc_s$. 
    \item[(ii)] Let $\Hc_0^p$ denote the set of $\varphi \in \Hc^p$ for which $\hat \varphi(0) = 0$, which is identical with the completion with respect to $\| \cdot\|_{\Hc^p}$ of the set of mean-zero $C^\infty$ functions. Then, 
     \[H^{-p} \subset \Hc^p_0 \subset H^p\]
     where $H^s, s \in \R$ are the homogeneous Sobolev spaces defined in the beginning of Section \ref{sec:background}. In particular, $\Hc^p_0$ is compactly embedded in $H^{-L}$ for all $L > p$. 
\end{itemize}

\bigskip

\noindent {\bf Spectral picture with respect to $\| \cdot \|_{s} = \| \cdot \|_{\Hc^p}$}

\medskip

We now turn to the spectral properties of the transfer operator $\Lc_{f}$ of a smooth diffeomorphism $f$ appropriately close to $f_A$. Below, we fix the following norm compatible with the $C^\infty$ topology on diffeomorphisms $f, g : \T^2 \to \T^2$: 

\[d_{C^\infty}(f, g) = \sum_{k = 0}^\infty 2^{-k} \frac{d_{C^k}(f, g) }{1+d_{C^k}(f, g) }\, , \]
where $d_{C^0}$ is the uniform norm, and $d_{C^k}(f, g) = \max\{ d_{C^{k-1}}(f, g), \sup_{x \in \T^2} \| D^k f(x) - D^k g(x)\|\}. $

\begin{theorem}\label{thm:specPic3}
    Let $\nu \in (1, \lambda), p > 0$ and $r > \nu^{-p}$. Then, there exists $\epsilon, K > 0$ such that the following hold for all  $C^\infty$ volume-preserving diffeomorphisms $f : \T^2 \to \T^2$ such that 
    \begin{align}\label{eq:C1Close3}
        d_{C^\infty}(f, f_A) < \epsilon \, .
    \end{align}
    \begin{itemize}
        \item[(a)] $\Lc_f$ satisfies Assumption \ref{ass:quasicompact3} with norm $\| \cdot \|_s$, i.e., $\Lc_f$ extends to a bounded linear operator on $\Bc_s$, and satisfies the estimate
        \[\rhoess(\Lc_f; H^{m}) \leq \nu^{-p} \,; \]
        
        \item[(b)] it holds that $\sigma(\Lc_f; \Hc^p) \setminus B_r(0)$ consists of the simple eigenvalue 1 with corresponding eigenfunction ${\bf 1}$ identically equal to 1;  
        \item[(c)] for any $\varphi \in \Hc^p_0$, it holds that 
        \[\| \Lc_{f}^n \varphi \|_{\Hc^p} \leq K r^{n} \| \varphi \|_{\Hc^p} \quad \text{ for all } \quad n \geq 0 \, ;   \]
        and 
        \item[(d)] There exists $\kappa_0 > 0$, depending only on $A, \nu, p$ and $r$, such that for any $\kappa \in [0,\kappa_0]$, items (a) -- (c) hold with $\Lc_f$ replaced by  $\Lc_{f, \kappa}$, with constants independent of $\kappa$. 
    \end{itemize}
\end{theorem}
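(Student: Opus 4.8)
The plan is to deduce Theorem~\ref{thm:specPic3} from Keller--Liverani spectral stability (Section~\ref{subsubsec:abstractKellerLiverani}), viewing $\Lc_f$ and $\Lcfk$ as small perturbations of the \emph{linear} operator $\Lc_{f_A}$, whose spectrum on $\Hc^p$ can be computed by hand. Fix once and for all a large $L>p$, so that (by the inclusions recorded after Definition~\ref{defn:f_map}, and directly from the weights in $\|\cdot\|_{\Hc^p}$) one has a compact embedding $\Hc^p_0\hookrightarrow H^{-L}$ and $\|\varphi\|_{H^{-L}}\le\|\varphi\|_{\Hc^p}$. The proof rests on three ingredients: \textbf{(I)} a uniform Lasota--Yorke (Doeblin--Fortet) inequality for $\{\Lcfk\}$ with strong norm $\|\cdot\|_{\Hc^p}$, weak norm $\|\cdot\|_{H^{-L}}$, and strong-norm contraction rate $\nu^{-p}$ --- in particular $\Lc_f,\Lcfk$ are bounded on $\Hc^p$ with constants independent of $d_{C^\infty}(f,f_A)<\epsilon$ and $\kappa\in[0,\kappa_0]$; \textbf{(II)} the smallness $\|\Lcfk-\Lc_{f_A}\|_{\Hc^p\to H^{-L}}\to 0$ as $(\epsilon,\kappa)\to(0,0)$; and \textbf{(III)} the base spectral picture $\sigma(\Lc_{f_A};\Hc^p)\setminus B_r(0)=\{1\}$, with $1$ an algebraically simple eigenvalue with eigenfunction ${\bf 1}$. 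Granting (I)--(III): Hennion's theorem applied to $\Lcfk$ on the invariant hyperplane $\Hc^p_0$ of mean-zero functions gives $\rhoess(\Lcfk;\Hc^p)\le\nu^{-p}$, which is part (a) and the essential-radius part of (d); Keller--Liverani stability then transports (III) to $\Lc_f$ and, uniformly, to $\Lcfk$, yielding (b), (c), and the remainder of (d). For (c) one runs the stability statement with a radius $r'\in(\nu^{-p},r)$: then $\sigma(\Lcfk;\Hc^p_0)\subset\overline{B_{r'}(0)}$, the spectral radius on $\Hc^p_0$ is $<r$, and $\|\Lcfk^n\varphi\|_{\Hc^p}\le K r^n\|\varphi\|_{\Hc^p}$ for $\varphi\in\Hc^p_0$.

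For (III), recall $\Lc_{f_A}e_k=e_{A^{-T}k}$, so $\widehat{\Lc_{f_A}\varphi}(j)=\hat\varphi(A^T j)$ and $\|\cdot\|_{\Hc^p}$ acts diagonally on the Fourier basis. Since $A^{-T}$ contracts $v^u_T$ and expands $v^s_T$, one checks that $A^{-T}(\Cc_u\setminus\{0\})\subset\operatorname{int}\Cc_s$ and that $A^{-T}$ is eventually strictly expanding on $\Cc_s$, with expansion rate tending to $\lambda$. The eigenvalue equation $\Lc_{f_A}\varphi=z\varphi$ reads $\hat\varphi(A^T j)=z\,\hat\varphi(j)$ for all $j$, so along an $A^T$-orbit $\{A^{nT}j_0\}_{n\ge 0}$ the coefficients grow like $z^n$ while the $\Hc^p$-weights grow like $|A^{nT}j_0|^{2p}\asymp\lambda^{2pn}$ (the forward orbit of any integer $j_0\ne 0$ enters $\Cc_u$); hence no nonzero $\varphi\in\Hc^p_0$ solves it unless $|z|<\lambda^{-p}$. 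Together with $\Lc_{f_A}{\bf 1}={\bf 1}$, the invariant splitting $\Hc^p=\C{\bf 1}\oplus\Hc^p_0$, and $\lambda^{-p}<\nu^{-p}<r$, this gives (III). The same diagonal computation, now with the extra damping $e^{-\kappa|2\pi k|^2}\le 1$, covers $\Lc_{f_A,\kappa}$ and, more importantly, records the structural fact used throughout: $e^{\kappa\Delta}$ is a Fourier multiplier with symbol in $(0,1]$, hence a contraction on $\Hc^p$ and on $H^{-L}$ that commutes with the frequency cones $\Cc_u,\Cc_s$.

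Ingredients (I) and (II) are where the real work lies. For (I), when $f$ is nonlinear $\Lc_f$ no longer acts diagonally on Fourier modes, so the cone-transport estimate must be localized in frequency and space, the $x$-dependence of $(Df_x)^{-T}$ being controlled by the fact that $f$ is a small --- still Anosov --- perturbation of $f_A$, with stable/unstable cone fields $C^0$-close to the constant ones. Following the microlocal construction of anisotropic Sobolev norms for hyperbolic maps (\cite[Section~3.3]{demersTransferOperatorsHyperbolic2021}, \cite{faure2008semi}, and Appendix~\ref{Jezequel}), one obtains, for a suitable iterate $N_0$ and cutoff $M$, an inequality
\[ \|\Lcfk^{N_0}\varphi\|_{\Hc^p}^2 \le \nu^{-2pN_0}\|\varphi\|_{\Hc^p}^2 + C_M\|\Pi_{\le M}\varphi\|^2, \qquad \varphi\in\Hc^p_0, \]
uniformly over $d_{C^\infty}(f,f_A)<\epsilon$ and $\kappa\in[0,\kappa_0]$: for $f_A$ itself the rate may be taken to be $\mu^{-2pN_0}$ for any $\mu\in(\nu,\lambda)$, and since $\mu>\nu$ there is room to absorb both the $O(\epsilon)$ errors from linearizing $f$ and the insertion of the contractions $e^{\kappa\Delta}$ between consecutive copies of $\Lc_f$ (which only damp frequencies and commute with the cone decomposition), the latter being the source of the $\kappa$-uniformity. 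Here $\Pi_{\le M}$ is the Galerkin projection, so $\|\Pi_{\le M}\varphi\|\le M^L\|\varphi\|_{H^{-L}}$ and the last term is the required compact defect; iterating the $N_0$-step inequality then yields the Lasota--Yorke estimate of (I). For (II) I would split $\Lcfk-\Lc_{f_A}=(e^{\kappa\Delta}-I)\Lc_f+(\Lc_f-\Lc_{f_A})$: the first term has $\Hc^p\to H^{-L}$ norm bounded by $\|e^{\kappa\Delta}-I\|_{H^p\to H^{-L}}$ times the uniform $\Hc^p$-bound on $\Lc_f$, and $\|e^{\kappa\Delta}-I\|_{H^p\to H^{-L}}\lesssim\kappa^{\min(1,(L-p)/2)}\to 0$ by \eqref{eq:heatSemigroupEst} (with $L$ taken large); the second term tends to $0$ as $d_{C^\infty}(f,f_A)\to 0$ by the classical estimate that composition with a $C^1$-close diffeomorphism perturbs a pullback operator by a small amount at the cost of one derivative, the gap $L-p$ having been chosen large enough to spare that derivative.

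The main obstacle is ingredient (I): the uniform Lasota--Yorke inequality with the sharp rate $\nu^{-p}$, simultaneously over $f$ near $f_A$ and over $\kappa\in[0,\kappa_0]$. This requires the microlocal machinery for anisotropic spaces adapted to the (perturbed) hyperbolic splitting, together with the observation that the interleaved heat semigroups are harmless because they are contractive Fourier multipliers commuting with the frequency geometry; the details are carried out in Appendix~\ref{Jezequel}.
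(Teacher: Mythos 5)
Your proposal follows essentially the same route as the paper: a uniform Lasota--Yorke inequality for the whole family $\{\Lc_{f,\kappa}\}$ with strong norm $\|\cdot\|_{\Hc^p}$ and weak norm $\|\cdot\|_{H^{-L}}$ (Proposition \ref{prop:uniformLasotaYorke3}, deferred to Appendix \ref{Jezequel}), the explicit Fourier-orbit computation of $\sigma(\Lc_{f_A};\Hc^p)$ (Lemma \ref{lem:specPicCAT3}), and Keller--Liverani stability in the mixed norm, with the same splitting of $\Lcfk-\Lc_{f_A}$ into a heat-semigroup piece controlled by \eqref{eq:heatSemigroupEst} and a composition piece costing one derivative. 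One parenthetical slip: $A^{-T}$ does not map $\Cc_u\setminus\{0\}$ into $\operatorname{int}\Cc_s$ (it fixes the direction $v^u_T$); but the fact you actually use --- that the forward $A^T$-orbit of every nonzero integer vector eventually enters $\Cc_u$ because the eigendirections are irrational --- is correct and is precisely the paper's argument.
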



The following is immediate from Theorem \ref{thm:specPic3}(a) -- (c) and Proposition \ref{prop:suffCondH-kMixing}. 
\begin{corollary}\label{cor:pertCATCumBatchelor3}
    Any mapping $f$ as in the statement of Theorem \ref{thm:specPic3} satisfies $\kappa$-uniform $H^{-1}$ mixing, and therefore satisfies the cumulative version of Batchelor's law in Theorem \ref{thm:cumLaw2}. 
\end{corollary}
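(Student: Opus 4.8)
\textbf{Proof plan for Corollary \ref{cor:pertCATCumBatchelor3}.} The corollary asserts two things, and the first implies the second almost for free. First I would verify that the hypotheses of Proposition \ref{prop:suffCondH-kMixing} are met for the norm $\| \cdot\|_s = \| \cdot\|_{\Hc^p}$ and the transfer operator $\Lc_f$ of any $f$ as in Theorem \ref{thm:specPic3}. The required compatibility estimate \eqref{eq:compatibleSobolev3}, namely $C^{-1} \| \varphi\|_{H^{-q}} \leq \| \varphi\|_s \leq C \| \varphi\|_{H^q}$ for some $q \geq \tfrac12$, is exactly the sandwich $H^{-p} \subset \Hc^p_0 \subset H^p$ recorded in item (ii) of Section \ref{subsec:catMap3}, with $q = \max\{p, \tfrac12\}$ (if $p < \tfrac12$ one trivially enlarges the exponents, using $\| \varphi\|_{H^{-1/2}} \leq \| \varphi\|_{H^{-p}}$ and $\| \varphi\|_{H^p} \leq \| \varphi\|_{H^{1/2}}$ on mean-zero functions). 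Quasicompactness (Assumption \ref{ass:quasicompact3}) is Theorem \ref{thm:specPic3}(a): $\Lc_f$ is bounded on $\Bc_s = \Hc^p$ with $\rhoess(\Lc_f; \Hc^p) \leq \nu^{-p} < 1$. The spectral gap hypothesis $\sigma(\Lc_f; \Bc_s) \setminus B_r(0) = \{1\}$ with $1$ simple and eigenfunction $\mathbf 1$, for some $r \in (\rhoess, 1)$, is precisely Theorem \ref{thm:specPic3}(b) (choosing $r \in (\nu^{-p}, 1)$ as permitted there).

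With these checked, Proposition \ref{prop:suffCondH-kMixing} applies verbatim and yields that $f$ exhibits $\kappa$-uniform $H^{-1}$ decay in the sense of Definition \ref{defn:kappaUnifH-1Decay2}; note the proposition's proof only uses the $\Lc_f$-spectral picture together with the perturbative heat-semigroup estimate \eqref{eq:heatSemigroupEst}, so no separate appeal to Theorem \ref{thm:specPic3}(d) is strictly needed — though one may alternatively invoke (d) directly to get the $\kappa$-uniform spectral picture for $\Lc_{f,\kappa}$ and then run the decay argument of Proposition \ref{prop:suffCondH-kMixing} with $\kappa$-independent constants. Either route gives a rate $\gamma > 0$ and, for each smooth mean-zero $h$, a constant $C_h$ with $\| \Lc_{f,\kappa}^n h\|_{H^{-1}} \leq C_h e^{-\gamma n}$ uniformly in $\kappa \geq 0$ and $n \geq 1$.

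Finally, $\kappa$-uniform $H^{-1}$ decay is exactly the standing hypothesis of Theorem \ref{thm:cumLaw2}, so that theorem applies and delivers the cumulative version of Batchelor's law \eqref{eq:cumLaw2} (together with the dissipative-range bound \eqref{eq:controlDissipRange} and the existence/uniqueness/representation statements in part (i)) for the pulsed-diffusion Markov chain driven by $f$. This is the assertion of the corollary.

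The only genuine content here is bookkeeping: matching the abstract hypotheses of Proposition \ref{prop:suffCondH-kMixing} to the concrete conclusions of Theorem \ref{thm:specPic3} — in particular confirming that the chosen $r$ can simultaneously satisfy $r > \rhoess(\Lc_f; \Hc^p)$ and $r < 1$, which is possible precisely because $\nu^{-p} < 1$. I do not anticipate any real obstacle; all the analytic work has been front-loaded into Theorem \ref{thm:specPic3} and Proposition \ref{prop:suffCondH-kMixing}, and this corollary is their immediate juxtaposition with Theorem \ref{thm:cumLaw2}.
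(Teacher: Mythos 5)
Your proposal is correct and matches the paper's route exactly: the paper simply declares the corollary ``immediate from Theorem \ref{thm:specPic3}(a)--(c) and Proposition \ref{prop:suffCondH-kMixing},'' and your write-up supplies precisely the bookkeeping that justifies this, including the correct handling of the $q \geq \tfrac12$ requirement in \eqref{eq:compatibleSobolev3} via $q = \max\{p,\tfrac12\}$. Nothing further is needed.
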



\begin{remark} \ 
    \begin{enumerate}
        \item Theorem \ref{thm:specPic3}(d) is stronger than what is needed for Corollary \ref{cor:pertCATCumBatchelor3}, but will be used shortly in the proof of Theorem \ref{thm:sectorialIntro}. 
        \item The proof of Theorem \ref{thm:specPic3} given in Section \ref{subsec:proofs3} and  Appendix \ref{Jezequel} only requires control on $d_{C^k}(f, f_A)$ for some  finite order $k \geq 1$ depending on the parameter $p$. See Remark \ref{rmk:unifLYcomments3} for additional comments. 
    \end{enumerate}
\end{remark}

\subsubsection*{Stratification estimate and proof of Theorem \ref{thm:sectorialIntro}}

With Theorem \ref{thm:specPic3} in hand, we can now turn to the proof of Theorem \ref{thm:sectorialIntro}, restated below for convenience. 

\begin{theorem}
Let $p > 0$. Then, there exists $\epsilon, C, \kappa_0 > 0$ such that the following holds for all $\kappa \in (0,\kappa_0]$ and $C^\infty$ volume-preserving diffeomorphisms $f : \T^2 \to \T^2$ with $d_{C^\infty}(f, f_A) < \epsilon$. 

With $g_{f, \kappa}$ a statistically stationary scalar for the pulsed diffusion process as in Theorem \ref{thm:cumLaw2} (c.f. Corollary \ref{cor:pertCATCumBatchelor3}) and for any $k \in \Cc^u$, it holds that 
\begin{align}
    \E | \hat g_{f, \kappa}(k)|^2 < \frac{C}{|k|^{2p}}  \,. 
\end{align}
\end{theorem}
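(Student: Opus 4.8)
The plan is to read the estimate off the second-moment identity of Lemma~\ref{lem:identityForProjection} together with the uniform geometric decay of $\Lc_{f,\kappa}^n b$ in the anisotropic norm $\|\cdot\|_{\Hc^p}$ supplied by Theorem~\ref{thm:specPic3}. First I would fix $\nu\in(1,\lambda)$ and then $r\in(\nu^{-p},1)$ — the interval is nonempty since $\nu>1$ forces $\nu^{-p}<1$ — and let $\epsilon,\kappa_0,K$ be the constants produced by Theorem~\ref{thm:specPic3} for these choices. By part (d), for every $f$ with $d_{C^\infty}(f,f_A)<\epsilon$ the conclusions (a)--(c) hold with $\Lc_{f,\kappa}$ in place of $\Lc_f$ with constants independent of $\kappa\in[0,\kappa_0]$; in particular $f$ satisfies $\kappa$-uniform $H^{-1}$ decay (Corollary~\ref{cor:pertCATCumBatchelor3}), so the statistically stationary scalar $g_{f,\kappa}$ of Theorem~\ref{thm:cumLaw2}(i) exists with the representation $g_{f,\kappa}=\sum_{n\geq 0}\eta_n\Lc_{f,\kappa}^n b$.

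Next I would apply Lemma~\ref{lem:identityForProjection} with $A=A_k$ the rank-one operator $A_k h=\hat h(k)\,e_k$, which is bounded on $L^2$ with $\|A_k h\|=|\hat h(k)|$, to obtain
\[
\E\,|\hat g_{f,\kappa}(k)|^2 \;=\; \sum_{n=0}^\infty \bigl|\Fc[\Lc_{f,\kappa}^n b](k)\bigr|^2 .
\]
The one genuinely new ingredient is the observation that, for $k\in\Cc_u$ with $k\neq 0$, extracting the $k$-th Fourier coefficient is a bounded functional on $\Hc^p$ of norm at most $|k|^{-p}$: indeed the middle sum in the definition of $\|\cdot\|_{\Hc^p}$ contains, hence dominates, the single term $|k|^{2p}|\hat\varphi(k)|^2$, so $|\hat\varphi(k)|\leq|k|^{-p}\|\varphi\|_{\Hc^p}$. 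Applying this with $\varphi=\Lc_{f,\kappa}^n b$ — which is smooth and mean-zero, hence in $\Hc^p_0$, because $b$ is mean-zero and both $\Lc_f$ and $e^{\kappa\Delta}$ preserve the spatial average — gives $|\Fc[\Lc_{f,\kappa}^n b](k)|^2\leq|k|^{-2p}\|\Lc_{f,\kappa}^n b\|_{\Hc^p}^2$, and Theorem~\ref{thm:specPic3}(c)--(d) then yields $\|\Lc_{f,\kappa}^n b\|_{\Hc^p}\leq Kr^n\|b\|_{\Hc^p}$ with $K$ independent of $\kappa$. Summing the geometric series,
\[
\E\,|\hat g_{f,\kappa}(k)|^2 \;\leq\; |k|^{-2p}\sum_{n=0}^\infty \|\Lc_{f,\kappa}^n b\|_{\Hc^p}^2 \;\leq\; \frac{K^2\|b\|_{\Hc^p}^2}{1-r^2}\,\frac{1}{|k|^{2p}} ,
\]
which is the claim with $C=K^2\|b\|_{\Hc^p}^2/(1-r^2)$; for $b=e_{k_0}$ one has $\|b\|_{\Hc^p}^2=|k_0|^{2p}$ or $|k_0|^{-2p}$ according to whether $k_0\in\Cc_u$.

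As for difficulty: essentially all of the work is packaged into Theorem~\ref{thm:specPic3} — constructing the anisotropic space $\Hc^p$, verifying quasicompactness (Assumption~\ref{ass:quasicompact3}) for $C^\infty$-small perturbations of $f_A$, localizing the peripheral spectrum at the simple eigenvalue $1$, and, crucially for the present deduction, obtaining all of this \emph{uniformly in the diffusivity} $\kappa$ (part (d)). Granting Theorem~\ref{thm:specPic3}, the argument above is bookkeeping: the only point that is not entirely automatic — uniformity of $K$ and $r$ in $\kappa$ — is exactly what part (d) is designed to furnish, so nothing beyond the quoted results is required here.
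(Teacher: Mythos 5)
Your proposal is correct and follows essentially the same route as the paper: the mode-wise second-moment identity from Lemma~\ref{lem:identityForProjection}, the observation that for $k\in\Cc^u$ the $\Hc^p$-norm dominates $|k|^{2p}|\hat\varphi(k)|^2$, the uniform-in-$\kappa$ geometric decay $\|\Lc_{f,\kappa}^n b\|_{\Hc^p}\leq Kr^n\|b\|_{\Hc^p}$ from Theorem~\ref{thm:specPic3}(c)--(d), and summation of the geometric series. Your write-up is in fact slightly more careful than the paper's (which has a squaring typo in the displayed inequality and does not spell out the choice of operator $A$ in Lemma~\ref{lem:identityForProjection}), but there is no substantive difference.
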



\begin{proof}[Proof assuming Theorem \ref{thm:specPic3}]
    
    Lemma \ref{lem:identityForProjection} implies   
    \begin{align}
        \E | \hat g_{f, \kappa}(k)|^2 = \sum_{n = 0}^\infty | \widehat{(\Lcfk^n b)} (k)|^2 \, . 
    \end{align}
    Fix $\nu \in (1, \lambda), r \in (\nu^{-p},1)$. Using Theorem \ref{thm:specPic3}, for $k \in \Cc^u$, it holds for all $n \geq 0$ that 
    \[|k|^{2p} | \widehat{(\Lcfk^n b)}(k)|^2 \leq \| \Lcfk^n b\|_{\Hc^p} \leq K r^n \| b \|_{\Hc^p} \, , \]
    hence 
    \[\E | \hat g_{f, \kappa}(k)|^2 \lesssim |k|^{-2p} \]
    up to a multiplicative constant depending only on $K, \nu, r, p$ and $\| b \|_{\Hc^p}$. 
\end{proof}

\subsection{Proofs}\label{subsec:proofs3}

It is natural to expect to obtain the spectral picture for $\Lc_f, \Lc_{f, \kappa}$ by perturbing off of the spectral picture for that of $\Lc_{f_A}$. Even in our setting, though, the perturbation $\Lc_{f_A} \mapsto \Lc_{f, \kappa}$ is very badly singular, and some controls will be needed to ensure spectral stability.


In Section \ref{subsubsec:abstractKellerLiverani} below, we present an abstract framework for spectral perturbation in this setting due to Keller and Liverani \cite{kellerStabilitySpectrumTransfer1999}. This framework is applied to the perturbation $f$ of $f_A$ in Section \ref{subsubsec:completeProof3}, where we summarize what will be needed. The proof of the main ingredient, Proposition \ref{prop:uniformLasotaYorke3} (Uniform Lasota-Yorke), is deferred to Appendix \ref{Jezequel}. 

\subsubsection{Spectral perturbation theory for transfer operators}\label{subsubsec:abstractKellerLiverani}

\newcommand{\vertiii}[1]{{\left\vert\kern-0.25ex\left\vert\kern-0.25ex\left\vert #1 
    \right\vert\kern-0.25ex\right\vert\kern-0.25ex\right\vert}}

Let $\| \cdot\|_s$ be a norm on $C^\infty$ functions on a compact manifold $M$, which we will refer to here as the \emph{strong norm};  as before we will write $\Bc_s$ for the completion of $C^\infty$ w.r.t. $\| \cdot \|_s$. In this framework we will require as well a second norm $\| \cdot\|_w$ on $C^\infty$, the \emph{weak norm}, with corresponding completed space $\Bc_w$. 

\begin{assumption}\label{ass:spacesAbstract3} \ 
    \begin{enumerate}
        \item For all $\varphi \in C^\infty$ it holds that 
        \[\|\varphi\|_w \leq \| \varphi\|_s \,.  \]
        \item Regarding $\Bc_s$ as a subspace of $\Bc_w$, it holds that 
        \begin{enumerate}
            \item $\Bc_s$ is dense in $(\Bc_w, \| \cdot \|_w)$; and 
            \item the unit ball $\{ \varphi \in \Bc_s : \| \varphi\|_s \leq 1\}$ is compact in $(\Bc_w, \| \cdot \|_w)$. 
        \end{enumerate}
    \end{enumerate}
\end{assumption}

Below, for an operator $Q : \Bc_s \to \Bc_s$ we define the `mixed norm'
\[\vertiii{Q} = \sup\{ \| Q v \|_w : v \in \Bc_s, \| v \|_s \leq 1 \} \,.\]

\medskip

Of interest for us are families $\mathcal{P}$ of operators on $\Bc_s$ nearby to some fixed $P_0 : \Bc_s \to \Bc_s$ for which $\sigma(P_0 : \Bc_s \to \Bc_s)$ is understood. 

\begin{assumption}\label{ass:operatorsAbstract3}
    There exist $C_1, C_2, C_3, M > 0$ and $\alpha \in (0,1)$ such that the following hold for all $P \in \mathcal{P}$. 
    \begin{enumerate}
        \item $P$ extends to a bounded linear operator $\Bc_w \to \Bc_w$ such that $\| P^n \|_w \leq C_1 M^n$ for all $n\in \mathbb{N}$; 
        \item (Uniform Lasota-Yorke) For all $\varphi \in \Bc_s$ and $n\in \mathbb{N}$, it holds that 
        \begin{align}\label{eq:unifLasotaYorke}
            \| P^n \varphi \|_s \leq C_2 \alpha^n \| \varphi\|_s + C_3 M^n \| \varphi \|_w \,. 
        \end{align}
    \end{enumerate}
\end{assumption}

\begin{theorem}[\cite{kellerStabilitySpectrumTransfer1999}] \label{thm:abstractStability3}
    Assume the setting of Assumptions \ref{ass:spacesAbstract3} and \ref{ass:operatorsAbstract3}. Then, for any $\eta > 0$ and $r \in (\alpha, \infty)$, there exist $\delta > 0, K > 0$ such that if $P \in \mathcal{P}, \vertiii{P - P_0} < \delta$, then: 
    \begin{itemize}
        \item[(a)] For all $z \in \sigma(P) \setminus B_r(0)$, there is some $z_0 \in \sigma(P_0)$ such that $|z - z_0| < \eta$; and 
        \item[(b)] it holds that 
        \[\| P^n \Pi^{(r)}_P\|_s \leq K r^n \qquad \text{ for all } n \geq 1 \, ,  \] so long as $\sigma(P_0: \Bc_s \to \Bc_s) \cap \{ |z| = r\} = \emptyset$. 
    \end{itemize}
\end{theorem}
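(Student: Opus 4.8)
The plan is to follow the strategy of Keller and Liverani \cite{kellerStabilitySpectrumTransfer1999}, whose engine is a uniform perturbative bound on the resolvents $(z-P)^{-1}$ measured in the strong norm $\|\cdot\|_s$, valid on circles avoiding $\sigma(P_0)$. I would organize the argument into three stages: uniform quasi-compactness of the family $\mathcal{P}$ on $\Bc_s$; the key uniform resolvent estimate; and the passage to spectral data via Cauchy integrals. \textbf{Stage 1: uniform quasi-compactness.} From the uniform Lasota--Yorke inequality \eqref{eq:unifLasotaYorke}, the bound $\|P^n\|_w \le C_1 M^n$, and the compactness of the embedding $\Bc_s \hookrightarrow \Bc_w$ (Assumption \ref{ass:spacesAbstract3}(2b)), Hennion's theorem --- that is, Nussbaum's formula for the measure of noncompactness applied to $P^n$ --- gives $\rhoess(P;\Bc_s) \le \alpha$ for every $P \in \mathcal{P}$, with spectral radius $\le M$. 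Hence each $P$ is quasi-compact on $\Bc_s$: outside $B_r(0)$ (for any $r > \alpha$) the spectrum is a finite set of eigenvalues of finite algebraic multiplicity, and one has a Riesz decomposition $I = \Pi_P^{\mathrm{out}} + \Pi^{(r)}_P$ into finite-rank and complementary spectral projections, with $\sigma\bigl(P|_{\mathrm{ran}\,\Pi^{(r)}_P}\bigr) = \sigma(P) \cap B_r(0)$.

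\textbf{Stage 2: the uniform resolvent estimate (the crux).} The goal is: given $r \in (\alpha,\infty)$ and a compact $\Gamma \subset \{|z| \ge r\}\setminus\sigma(P_0)$, there exist $\delta > 0$ and $F < \infty$ such that $\vertiii{P - P_0} < \delta$, $P \in \mathcal{P}$, forces $\Gamma \subset \rho(P)$ and $\sup_{z\in\Gamma}\|(z-P)^{-1}\|_s \le F$. I would prove this in two moves. First, a \emph{Lasota--Yorke trick}: fixing $n_1 = n_1(r)$ with $C_2(\alpha/r)^{n_1} \le \tfrac12$ (which then works for all $|z|\ge r$), expand $(z-P)^{-1} = \sum_{k<n_1}z^{-k-1}P^k + z^{-n_1}P^{n_1}(z-P)^{-1}$, apply \eqref{eq:unifLasotaYorke} to the tail term, and absorb to get
\[\|(z-P)^{-1}\varphi\|_s \le A(z)\|\varphi\|_s + B(z)\,\vertiii{(z-P)^{-1}}\,\|\varphi\|_s ,\]
with $A(z), B(z)$ finite, bounded on $\Gamma$, and independent of $P \in \mathcal{P}$; this reduces matters to a uniform bound on the \emph{mixed} norm $\vertiii{(z-P)^{-1}}$. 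Second, a \emph{perturbative bound on the mixed norm}: using the resolvent identity $(z-P)^{-1} = (z-P_0)^{-1} + (z-P_0)^{-1}(P-P_0)(z-P)^{-1}$ iterated to a carefully chosen finite order, together with smallness of $\vertiii{P-P_0}$ and the first estimate to re-absorb the remainder, one derives an inequality of Hölder type, schematically $\vertiii{(z-P)^{-1}} \le \vertiii{(z-P_0)^{-1}}/\bigl(1 - c(z)\vertiii{P-P_0}^{\theta(z)}\bigr)$ with $\theta(z) \in (0,1)$ and $c(z)$ locally bounded; since $\vertiii{(z-P_0)^{-1}} \le \|(z-P_0)^{-1}\|_s$ is finite and bounded on the compact $\Gamma$, choosing $\delta$ small (uniformly over $\Gamma$) closes the estimate.

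\textbf{Stage 3: spectral conclusions.} For (a): the set $\{|z| \ge r,\ \dist(z,\sigma(P_0)) \ge \eta\}$ is compact and disjoint from $\sigma(P_0)$, so Stage 2 applies with a single threshold $\delta = \delta(r,\eta)$; in particular that whole set lies in $\rho(P)$, forcing every point of $\sigma(P)\setminus B_r(0)$ to lie within $\eta$ of $\sigma(P_0)$. For (b): assuming $\sigma(P_0) \cap \{|z| = r\} = \emptyset$, take $\Gamma = \{|z| = r\}$; by (a), applied with $\eta = \tfrac12 \dist\bigl(\sigma(P_0),\{|z|=r\}\bigr)$, the circle lies in $\rho(P)$ as well once $\vertiii{P-P_0}<\delta$, the projection onto $\sigma(P)\cap B_r(0)$ is $\Pi^{(r)}_P = \tfrac{1}{2\pi i}\oint_\Gamma (z-P)^{-1}\,dz$, and $P^n\Pi^{(r)}_P = \tfrac{1}{2\pi i}\oint_\Gamma z^n(z-P)^{-1}\,dz$, whence $\|P^n\Pi^{(r)}_P\|_s \le r^{n+1}\sup_{|z|=r}\|(z-P)^{-1}\|_s \le K r^n$ with $K$ uniform by Stage 2.

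The main obstacle is Stage 2, and within it the second move: arranging the iterated resolvent expansion so that the smallness of $P - P_0$ is exploited purely in the mixed $\Bc_s \to \Bc_w$ norm --- never in the $\Bc_s \to \Bc_s$ or $\Bc_w \to \Bc_w$ operator norms, where $P-P_0$ is merely bounded and resolvents of $P_0$ need not even be bounded --- and, in the same breath, checking that the resulting thresholds $\delta$ and bounds $F$ can be taken locally uniform in $z$. This bookkeeping is essentially the entire content of \cite{kellerStabilitySpectrumTransfer1999}; Stages 1 and 3 are routine once it is in hand.
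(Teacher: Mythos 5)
The paper does not prove this theorem --- it is cited verbatim from Keller--Liverani \cite{kellerStabilitySpectrumTransfer1999} (see also Remark \ref{rmk:singleOperator3}, which defers to that reference and to \cite[Appendix~C]{demersTransferOperatorsHyperbolic2021} for details) --- so there is no internal proof to compare against. Your three-stage outline is a faithful reconstruction of the Keller--Liverani argument: quasi-compactness of each $P \in \mathcal{P}$ via Hennion/Nussbaum, a uniform resolvent estimate on $\{|z| \ge r\} \setminus \sigma(P_0)$ whose only smallness parameter is $\vertiii{P-P_0}$ (the H\"older-type rather than Lipschitz dependence you flag is the hallmark of this perturbation theory), and the Riesz--Dunford contour integrals in Stage~3; the part you leave schematic, Stage~2 Move~2, is indeed where essentially all the technical content of \cite{kellerStabilitySpectrumTransfer1999} lives, so the outline is the right level of detail for a cited result.
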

Above, $\Pi_P^{(r)} = \frac{1}{2 \pi i} \int_{\gamma_r} (z - P)^{-1} dz$ is the spectral projector of $P$ corresponding to the part of $\sigma(P : \Bc_s \to \Bc_s)$ contained within the curve $\gamma_r$ of radius $r$ centered at zero. We note that as a consequence of part (a), $\sigma(P : \Bc_s \to \Bc_s) \cap \{ |z| = r\} = \emptyset$, and so $\Pi^{(r)}_P$ is defined for such $P$. 

\begin{remark}\label{rmk:singleOperator3}
    For a single operator $P$, Assumptions \ref{ass:spacesAbstract3} and \ref{ass:operatorsAbstract3} for some $\alpha \in (0,1)$ imply Assumption \ref{ass:quasicompact3}(2), namely, that 
    \begin{align}\label{eq:abstControlEssSpec}\rho_{\rm ess}(P : \Bc_s \to \Bc_s) \leq \alpha < 1 \,.  \end{align}
    This standard result, sometimes referred to as Hennion's theorem, is a consequence of Nussbaum's characterization of the essential spectral radius -- for details, see, e.g., \cite[Appendix B]{demersTransferOperatorsHyperbolic2021}. 
    Equation \eqref{eq:unifLasotaYorke} is often referred to as a \emph{Lasota-Yorke inequality}. 

    The hypotheses of Theorem \ref{thm:abstractStability3} ensure \eqref{eq:abstControlEssSpec} holds uniformly across $P \in \mathcal{P}$, hence one only controls the discrete spectrum (a well-behaved subset of the point spectrum) in Theorem \ref{thm:abstractStability3}(a). For further consequences of this framework, e.g., estimates on spectral projectors, we refer to the original paper \cite{kellerStabilitySpectrumTransfer1999} or to \cite[Appendix C]{demersTransferOperatorsHyperbolic2021}. 
\end{remark}

\subsubsection{Proof of Theorem \ref{thm:specPic3}}\label{subsubsec:completeProof3}

The following addresses a uniform Lasota-Yorke inequality for the set of transfer operators $\Lcfk$ for $f$ near to $f_A$ and for $\kappa \geq 0$. Its proof is given in Appendix \ref{Jezequel}. 

\begin{proposition}[Uniform Lasota-Yorke inequality]\label{prop:uniformLasotaYorke3}
    Let $p > 0, L > p$ and $\nu \in (1, \lambda)$. Then, there exists $\epsilon, C_2, C_3, M > 0$ such that the following holds. 
    Assume $\kappa \geq 0$ and that $f$ is a volume-preserving diffeomorphism of $\T^d$ such that $d_{C^\infty}(f, f_A) < \epsilon$. Then, for all $n \geq 1$ and $\varphi \in \Hc^p$, 
    \[\| \Lc_{f, \kappa}^n \varphi \|_{\Hc^p} \leq C_2 \nu^{-np} \| \varphi \|_{\Hc^p} + C_3 M^n \| \varphi\|_{H^{-L}} \,.  \]
\end{proposition}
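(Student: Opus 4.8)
The plan is to establish the estimate $\|\Lc_{f,\kappa}^n\varphi\|_{\Hc^p} \leq C_2\nu^{-np}\|\varphi\|_{\Hc^p} + C_3 M^n\|\varphi\|_{H^{-L}}$ by working on the Fourier side, splitting the transfer operator's action into a ``good cone'' part where the dynamics contracts in the $\Hc^p$-norm and a ``bad'' low-frequency remainder that is absorbed into the weak $H^{-L}$ norm. First I would analyze the linear model $f_A$ directly: the transfer operator $\Lc_{f_A}$ acts on Fourier modes by $\widehat{\Lc_{f_A}\varphi}(k) = \hat\varphi(A^T k)$ (up to the mod-1 identification, which does nothing for integer wavenumbers), so it is literally a permutation of $\Z^2$ given by $k \mapsto (A^T)^{-1}k = A^{-T}k$. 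Because $\Cc_u$ is a cone that is forward-invariant under $A^{-T}$ (vectors tilt toward the expanding eigendirection $v^u_T$ of $A^T$... wait — one must be careful: $A^{-T}$ expands along $v^s_T$; I'll choose the cone convention so that the relevant map pushes mass toward the negative-regularity region and check signs carefully), modes that start in the good part of $\Z^2$ stay there, and on that part the weight $|k|^{2p}$ versus $|A^{-T}k|^{2p}$ changes by a definite factor bounded by $\nu^{-2p}$ once $|k|$ exceeds a threshold $R_0 = R_0(\nu)$. For $e^{\kappa\Delta}$, the Fourier multiplier $e^{-4\pi^2\kappa|k|^2} \leq 1$ only helps, and is $\kappa$-independent in the trivial direction, so it never spoils the contraction; this is why the constants can be taken uniform in $\kappa \geq 0$.

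The key steps, in order: (1) prove the $n=1$ estimate for $f_A$ itself, decomposing $\Z^2\setminus\{0\}$ into (i) modes $k$ with $|k| \geq R_0$ lying in the contracting cone, handled by the eigenvalue gap; (ii) finitely many low modes $|k| < R_0$, whose contribution to $\|\Lc_{f_A,\kappa}\varphi\|_{\Hc^p}^2$ is bounded by $C\sum_{|k|<R_0}|\hat\varphi((A^T)^{\pm}k)|^2 \leq C R_0^{2L} \sum_{|j| < R_0'} |j|^{-2L}|\hat\varphi(j)|^2 \leq C'\|\varphi\|_{H^{-L}}^2$ since $|k|^{-2L}$ is bounded below on a finite set; (iii) the (few) modes that start in $\Cc_u$ but whose image under $A^{-T}$ leaves it — these are confined near the boundary ray and near the origin, again a controllable low-frequency set folded into the weak norm. (2) Iterate to get the $n$-fold estimate: the cone-invariance makes the good part genuinely submultiplicative with rate $\nu^{-np}$, while the accumulated bad terms form a geometric-type sum producing the $C_3 M^n$ factor (here $M$ can be taken as any fixed upper bound for $\|\Lc_{f_A}\|_{H^{-L}}$ together with the number of modes crossing the cone boundary per step). (3) Perturb to general $f$ with $d_{C^\infty}(f,f_A) < \epsilon$: now $\Lc_f$ is no longer diagonal on the Fourier basis, but writing $f = f_A + $ (small smooth perturbation), the operator $\Lc_f - \Lc_{f_A}$ has a smoothing kernel (its Fourier matrix decays rapidly off the diagonal $k \mapsto A^{-T}k$ with norm controlled by $\epsilon$ and by enough $C^k$-derivatives), so $\Lc_f$ maps $\Cc_u$-supported mass to mass that is still ``essentially'' in a slightly fattened cone plus an $O(\epsilon)$ error in $\Hc^p$ and a tail that is again low-frequency-dominated. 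Choosing $\nu$ slightly below the true expansion rate $\lambda$ gives room to absorb this $O(\epsilon)$ loss into the gap between $\nu^{-p}$ and $\lambda^{-p}$, provided $\epsilon$ is small enough.

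The main obstacle I expect is step (3): controlling the off-diagonal Fourier decay of $\Lc_f$ and ensuring the cone structure is only mildly distorted by a nonlinear perturbation. Unlike $f_A$, a general $f$ does not respect the cone decomposition of $\Z^2$ exactly, and one needs a quantitative statement that conjugating by $f$ versus by $f_A$ differs by an operator small in the mixed $\vertiii{\cdot}$-sense relative to $\Hc^p$; this is the standard but technically delicate heart of anisotropic-space constructions (the reason the paper defers it to Appendix \ref{Jezequel}), and it is where the finite-order-$C^k$-closeness remark enters — one needs roughly $k \sim L$ derivatives of closeness so that the perturbation's Fourier kernel decays faster than the weight mismatch $|k|^{2p}/|j|^{-2L}$ across the relevant mode pairs. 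A secondary subtlety is bookkeeping the $\kappa$-uniformity: one must verify at every step that inserting $e^{\kappa\Delta}$ factors (which commute with nothing but are contractions on every $H^s$) never forces $\kappa$-dependent constants, which follows because $\|e^{\kappa\Delta}\|_{H^s\to H^s}\leq 1$ is used only as an inequality, never as an identity, and Proposition \ref{prop:suffCondH-kMixing}'s telescoping already isolates the single ``bad'' $\kappa$-term elsewhere.
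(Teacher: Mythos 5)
Your plan gets the broad architecture right (work on the Fourier side, split $\Z^2$ into a contracting-cone region and a low-frequency remainder absorbed by $\|\cdot\|_{H^{-L}}$), and your step (1) for the linear model $f_A$ is correct and close to what the paper does for $\Lc_{f_A}$ in Lemma~\ref{lem:specPicCAT3}. But there are two genuine gaps, both in exactly the places you flag as "where the difficulty lies."

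First, your step (3) treats $f$ perturbatively: write $\Lc_f = \Lc_{f_A} + (\Lc_f - \Lc_{f_A})$, argue the difference has a Fourier kernel concentrated near the diagonal $k\mapsto A^{-T}k$, and absorb the loss by a "slightly fattened cone." This doesn't close as stated. To iterate to time $n$ you would need bounds on $\|\Lc_f^j\|_{\Hc^p}$ for the intermediate times, which is circular; and a cone that fattens by $O(\epsilon)$ at each step is unbounded as $n\to\infty$, destroying the uniform-in-$n$ constant $C_2$. The paper avoids this entirely: it is \emph{not} a perturbation argument off $\Lc_{f_A}$. Since the hyperbolicity cone conditions \eqref{eq:expansionFapp}--\eqref{eq:contractionFapp} on $F=f^{-1}$ are $C^1$-open, $f$ itself has \emph{fixed, invariant} cones $\Cc^u_*, \Cc^s_*$; the key technical lemma (the non-stationary phase estimate, Lemma~\ref{lem:nonstatPhaseApp}) then bounds $\|\Pi_N^t \Lc_f^n \Pi_{N'}^{t'}\|_{L^2\to L^2}$ for ``insignificant'' transitions directly by an integration-by-parts / stationary-phase argument on the kernel $\int e^{2\pi i (k'\cdot F^n x - k\cdot x)}\,dx$, using the cone-invariance of $F^n$ (not $F_A^n$) to get a lower bound on $|\nabla G|$. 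You never identify such a kernel estimate, which is the heart of the appendix.

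Second, your treatment of $\kappa>0$ is wrong as stated. The claim that "$e^{-4\pi^2\kappa|k|^2}\le 1$ only helps" is fine for $n=1$, but for $n>1$ the operator is $(e^{\kappa\Delta}\Lc_f)^n$, and the interleaved heat-semigroup factors destroy the composition structure $\Lc_f^n=\Lc_{f^n}$. The Fourier matrix entry of $\Lc_{f,\kappa}^n$ is a multi-sum over intermediate wavenumbers $k_1,\dots,k_{n-1}$ with multipliers $\prod_j e^{-4\pi^2\kappa|k_j|^2}$, not a multiplier times the entry of $\Lc_f^n$, so the non-stationary phase argument does not apply directly and one cannot simply "peel off" the heat factors. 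The paper's resolution (Section~\ref{subsec:treatDiffusivityAPP}) is a Feynman--Kac/stochastic-realization trick: write $e^{\kappa\Delta}\phi(x)=\int \phi(x+\omega)\,d\P(\omega)$, so $\Lc_{f,\kappa}^n\varphi = \E_{\underline\omega}\big[\Lc^n_{\underline\omega}\varphi\big]$ with $f_\omega(x)=f(x)-\omega$. Because the shift is additive, $Df_\omega = Df$, so \eqref{eq:expansionFapp}--\eqref{eq:contractionFapp} hold pathwise with the \emph{same} constants, the $\kappa=0$ Lasota--Yorke estimate applies almost surely, and the triangle inequality for the Bochner integral gives the $\kappa$-uniform bound. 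This is the insight you are missing, and it is essential for the $\kappa$-uniformity you correctly identify as a requirement.
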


Turning now to the proof of Theorem \ref{thm:specPic3}, we begin by recording the spectral picture for $\Lc_{f_A}$ on $\Hc^p$, before we perturb the transfer operator.

\begin{lemma}\label{lem:specPicCAT3}
    Let $p > 0$. Then, $\Lc_{f_A}$ extends to a bounded linear operator on $\Hc^p$, and 
    \[\sigma(\Lc_{f_A} : \Hc^p \to \Hc^p) \setminus \overline{B_{\lambda^{-p}}(0)} = \{1\} \, , \]
    where $1$ is a simple eigenvalue with eigenfunction ${\bf 1}$ identically equal to 1. 
\end{lemma}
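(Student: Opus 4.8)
The plan is to exploit the fact that, for the linear automorphism $f_A$, the transfer operator acts on the Fourier basis as a lattice permutation, which essentially diagonalizes the whole problem. Concretely $\Lc_{f_A} e_k = e_{A^{-T}k}$, equivalently $\widehat{\Lc_{f_A}^n\varphi}(k) = \hat\varphi\big((A^T)^n k\big)$ for smooth $\varphi$ and $n\ge 0$. In particular $\Lc_{f_A}$ fixes constants, preserves the splitting $\Hc^p = \C\cdot{\bf 1}\oplus\Hc^p_0$ into closed subspaces (here $\Hc^p_0=\{\hat\varphi(0)=0\}$ is the norm-orthogonal complement of $\C{\bf 1}$ in $\Hc^p$), and --- writing $w(k)=|k|^{p}$ for $k\in\Cc_u\setminus\{0\}$ and $w(k)=|k|^{-p}$ for $k\in\Cc_s\setminus\{0\}$, and using that $A^T$ is a bijection of $\Z^2\setminus\{0\}$ so that $j=(A^T)^n k$ is a valid reindexing --- for $\varphi\in\Hc^p_0$ one gets
\[
\|\Lc_{f_A}^n\varphi\|_{\Hc^p}^2 \;=\; \sum_{j\in\Z^2\setminus\{0\}} w\big((A^{-T})^n j\big)^2\,|\hat\varphi(j)|^2 \, .
\]
Thus the whole lemma reduces to the single uniform estimate
\[
w\big((A^{-T})^n j\big)\;\le\; C\,\lambda^{-np}\,w(j)\qquad\text{for all }j\in\Z^2\setminus\{0\}\text{ and } n\ge 0 \, ,
\]
for a constant $C$ depending only on $A$ and $p$. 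Granting this, the case $n=1$ shows $\Lc_{f_A}$ extends boundedly from $C^\infty$ to $\Hc^p$, and the general case gives $\|\Lc_{f_A}^n|_{\Hc^p_0}\|\le C\lambda^{-np}$, hence the spectral radius of $\Lc_{f_A}|_{\Hc^p_0}$ is at most $\lambda^{-p}<1$; combined with $\Lc_{f_A}{\bf 1}={\bf 1}$ and the invariance of the splitting, $\sigma(\Lc_{f_A};\Hc^p)=\{1\}\cup\sigma(\Lc_{f_A}|_{\Hc^p_0})\subseteq\{1\}\cup\overline{B_{\lambda^{-p}}(0)}$, so $1$ is an isolated spectral point whose spectral projector is the rank-one projection onto $\C{\bf 1}$ along $\Hc^p_0$ --- i.e.\ $1$ is a simple eigenvalue with eigenfunction ${\bf 1}$ --- and $\sigma(\Lc_{f_A};\Hc^p)\setminus\overline{B_{\lambda^{-p}}(0)}=\{1\}$.

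To prove the displayed uniform estimate I would pass to the eigenbasis: write $j=a\,v^u_T+b\,v^s_T$, noting $a,b\ne 0$ since $v^u_T,v^s_T$ have irrational slope and $j\ne0$ is an integer vector. Recall the cone invariance $A^T\Cc_u\subseteq\Cc_u$, equivalently $A^{-T}\Cc_s\subseteq\Cc_s$ (immediate since $A^{\pm T}$ scales the $v^u_T$- and $v^s_T$-components by factors of modulus $\lambda^{\pm1}$ and $\lambda^{\mp1}$); thus an $A^{-T}$-orbit that ever enters $\Cc_s$ stays there, and $|(A^{-T})^n j|\gtrsim \lambda^{-n}|a|+\lambda^n|b|$ always while $|(A^{-T})^n j|\le \lambda^{-n}|a|+\lambda^n|b|$ always. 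The only arithmetic ingredient is the lower bound $|b|\gtrsim |j|^{-1}$ valid when $j\in\Cc_u$ (where $|a|\asymp|j|$): this holds because $\det[\,j\mid A^{-T}j\,]$ is a nonzero integer --- nonzero as $j$ is not an eigenvector of $A^{-T}$, and, computed in the eigenbasis, a nonzero constant multiple of $ab$ --- which forces $|ab|\gtrsim 1$ (equivalently, $\Span v^u_T$ is a badly approximable direction, $\dist(j,\Span v^u_T)\gtrsim|j|^{-1}$). With these facts the estimate follows from a three-way split: (i) if $j\in\Cc_s$ then $(A^{-T})^n j\in\Cc_s$ and $|(A^{-T})^n j|\gtrsim\lambda^n|b|\gtrsim\lambda^n|j|$, so $w((A^{-T})^n j)=|(A^{-T})^n j|^{-p}\lesssim\lambda^{-np}w(j)$; (ii) if $j\in\Cc_u$ and $(A^{-T})^n j\in\Cc_u$, then membership in $\Cc_u$ at step $n$ forces $\lambda^{2n}|b|\le|a|$, hence $|(A^{-T})^n j|\le\lambda^{-n}|a|+\lambda^n|b|\le 2\lambda^{-n}|a|\lesssim\lambda^{-n}|j|$ and $w((A^{-T})^n j)=|(A^{-T})^n j|^{p}\lesssim\lambda^{-np}w(j)$; (iii) if $j\in\Cc_u$ but $(A^{-T})^n j\in\Cc_s$, then $|(A^{-T})^n j|\gtrsim\lambda^n|b|\gtrsim\lambda^n|j|^{-1}$, so $w((A^{-T})^n j)=|(A^{-T})^n j|^{-p}\lesssim\lambda^{-np}|j|^p=\lambda^{-np}w(j)$. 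Since $A^{-T}\Cc_s\subseteq\Cc_s$, these cases exhaust all possibilities, the case $n=0$ is trivial, and all implied constants depend only on $A$ (through its eigenvalues, eigenvectors and the cone) and on $p$, giving a uniform $C$.

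I would then assemble the conclusion exactly as in the first paragraph, being slightly careful that $\Hc^p=\C{\bf 1}\oplus\Hc^p_0$ is a decomposition into closed $\Lc_{f_A}$-invariant subspaces (so the spectrum is the union of the two restricted spectra) and that ``$1$ a simple eigenvalue'' here means precisely that its spectral projector has rank one. The only genuinely non-routine point is the arithmetic lower bound $|b|\gtrsim|j|^{-1}$ for $j$ deep inside $\Cc_u$: without it, case (iii) fails for integer vectors nearly aligned with the unstable direction, so that is where the care is concentrated; everything else is bookkeeping with the invariant cones and the eigenvalue gap $\lambda>1$.
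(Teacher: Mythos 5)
Your proof is correct, and it takes a genuinely different route from the paper's. The paper deduces the essential spectral radius bound $\rho_{\rm ess}(\Lc_{f_A};\Hc^p)\le\lambda^{-p}$ from the uniform Lasota--Yorke inequality (Proposition \ref{prop:uniformLasotaYorke3}, itself proved via Littlewood--Paley decomposition and non-stationary phase in the appendix), and then separately excludes point spectrum of modulus $>\lambda^{-p}$ by showing that a putative eigenfunction relation $c^n\hat\varphi(k)=\hat\varphi((A^T)^nk)$ forces a divergent series in the $\Hc^p$ norm unless $\hat\varphi(k_0)=0$ for all $k_0\ne0$. You instead exploit the exact diagonalization $\Lc_{f_A}e_{k'}=e_{A^{-T}k'}$ to reduce everything to the weight estimate $w((A^{-T})^nj)\le C\lambda^{-np}w(j)$, whose only non-trivial input is the Diophantine bound $|ab|\gtrsim1$ on the eigenbasis coordinates of a nonzero integer vector, which you correctly extract from the nonzero integer determinant $\det[\,j\mid A^{-T}j\,]$; your three-way cone case analysis is complete because $A^{-T}\Cc_s\subseteq\Cc_s$. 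This buys you a strictly stronger conclusion than the lemma states --- a uniform operator bound $\|\Lc_{f_A}^n|_{\Hc^p_0}\|\le C\lambda^{-np}$, hence a bound on the full (not just essential) spectral radius of the restriction to mean-zero functions at the sharp rate $\lambda^{-p}$ --- and it is self-contained, not relying on the appendix. What it does not buy is robustness: the argument is special to the exactly linear map (both approaches ultimately rest on the same arithmetic fact that integer vectors avoid the irrational eigendirections, but the paper only needs its qualitative form), whereas the Lasota--Yorke machinery the paper leans on is needed anyway for the perturbations $f$ of $f_A$, so the paper's proof of this particular lemma comes essentially for free. Your assembly of the spectral statement from the invariant closed splitting $\Hc^p=\C\cdot{\bf 1}\oplus\Hc^p_0$ and the rank-one spectral projector at $1$ is also correct.
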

\begin{proof}[Proof of Lemma \ref{lem:specPicCAT3}]
    For any $\nu \in (1, \lambda)$, Proposition \ref{prop:uniformLasotaYorke3} implies, in particular, that $\rhoess(\Lc_{f_A} : \Hc^p \to \Hc^p) \leq \nu^{-p}$ (c.f. Remark \ref{rmk:singleOperator3}), hence $\rhoess(\Lc_{f_A} : \Hc^p \to \Hc^p) \leq \lambda^{-p}$ on taking $\nu \to \lambda$. The spectrum complement of the essential spectrum is a subset of the point spectrum, so it remains to show that if $\Lc_{f_A} \varphi = c \varphi$ for some $|c| > \lambda^{-p}$ and $\varphi \in \Hc^p$, then $\varphi$ is constant almost-surely. 
    
    For this, consider the relation $\Lc_{f_A}^n \varphi = c^n \varphi$, $n \geq 1$, and take a Fourier transform of both sides, yielding
    \[c^n \hat\varphi(k) =\hat\varphi((A^T)^n k) \,. \]
    Fix $k_0 \in \Z^2 \setminus \{ 0 \}$. 
    Using that the eigenvectors $v^u_T, v^s_T$ of $A^T$ are not rational, it follows that there exists $N_0$ such that $(A^T)^n k_0 \in \Cc^u$ for all $n \geq N_0$. Now, 
    \begin{align*} \infty & > \| \varphi\|_{\Hc^p}^2 \geq \sum_{n \geq N_0} |(A^T)^n k_0|^{2p} |\varphi ((A^T)^n k_0)|^2 \\ 
    & \geq C^{-1} |\hat \varphi(k_0)|^2 \sum_{n\geq N_0} \lambda^{2pn} |c|^{2n} \, ,   \end{align*}
    where $C > 0$ is a constant. Since $|c| \lambda^p > 1$, the right-hand series diverges, and it follows that $\hat \varphi(k_0) = 0$. 
    
    We conclude that $\hat{\varphi}(0)$ is the only nonzero Fourier coefficient, hence $\varphi$ is constant and $c = 1$. From this argument, we also deduce that $1$ is a simple eigenvalue, as desired. 
\end{proof}

    \begin{proof}[Proof of Theorem \ref{thm:specPic3}]
        Fix $p, C_0 > 0, L \geq p + 2$ and $\nu \in (1, \lambda)$. Let $\epsilon > 0$ and $C_2, C_3, M > 0$ be as in Proposition \ref{prop:uniformLasotaYorke3}. 
        From the definitions, Assumption \ref{ass:spacesAbstract3} holds for the pair of spaces $\Bc_s = \Hc^p_0, \Bc_w = H^{-L}$. Assumption \ref{ass:operatorsAbstract3} for 
        \[\mathcal{P} = \{ \Lc_{f, \kappa}: \kappa \geq 0,  d_{C^\infty} (f_A, f) < \epsilon\}\] follows from Proposition \ref{prop:uniformLasotaYorke3}.

        Next, apply Theorem \ref{thm:abstractStability3} to the family $\mathcal{P}$ and $P_0 = \Lc_{f_A}$ with $\eta = \frac{1}{1000}$, yielding $\delta > 0$ such that $\vertiii{\Lc_{f_A} - \Lc_{f, \kappa}} < \delta$ implies items (a), (b) of Theorem \ref{thm:abstractStability3}. 
        Let us now check that if $\epsilon$ and $\kappa_0 > 0$ are sufficiently small, then $\vertiii{\Lc_{f_A} - \Lc_{f, \kappa}} < \delta$ whenever $d_{C^\infty}(f, f_A) < \epsilon$. Indeed, if $f_1, f_2 : \T^d \to \T^d$ then one has the general estimate 
        \[\| \Lc_{f_1} \psi - \Lc_{f_2} \psi\|_{H^s} \leq C_s d_{C^{\lceil |s| \rceil}} (f_1^{-1}, f_2^{-1}) \| \psi\|_{H^{s + 1}}\]
        for all $\psi \in C^\infty$ and $s \in \R$, where $C_s$ is a constant. In particular, $d_{C^{\lceil |s| \rceil}} (f_1^{-1}, f_2^{-1})$ is arbitrarily small for $f_1,f_2$ near $f_A$ in the $C^\infty$ metric. On the other hand, the difference between $\Lc_f$ to $\Lc_{f, \kappa}$ is addressed by the estimate \eqref{eq:heatSemigroupEst}. The desired estimate for $\vertiii{\Lc_{f_A} - \Lc_{f, \kappa}}$ follows on setting $s = -L$, using that $\| \cdot \|_{H^{-(L-1)}} \leq \| \cdot\|_{\Hc^p}$ by construction. 
        
        By now, we have deduced $\rhoess(\Lc_{f, \kappa}) \leq \nu^{-p}$ (Remark \ref{rmk:singleOperator3}). From Theorem \ref{thm:abstractStability3}(a) and the spectral picture for $\Lc_{f_A}$ in Lemma \ref{lem:specPicCAT3},  it follows that there is no point spectrum of $\Lc_{f, \kappa} : \Hc^p_0 \to \Hc^p_0$ away from $B_r(0)$. In view of volume preservation of $f$ and basic properties of the heat semigroup, it follows that $\sigma(\Lc_{f, \kappa} :\Hc^p \to \Hc^p) \setminus B_r(0) = \{ 1\}$,  where 1 is a simple eigenvalue. 
        Finally, the estimate in Theorem \ref{thm:specPic3}(c) follows from Theorem \ref{thm:abstractStability3}(b). This completes the proof.

        

    \end{proof}

\begin{remark}\label{rmk:unifLYcomments3}
    Before moving on, we comment on the various smoothness assumptions required in Section \ref{sec:sectorial}. 
    \begin{enumerate}
        \item The proof of Proposition \ref{prop:uniformLasotaYorke3} actually only uses that $f, f_A$ are close in $C^1$, and that $f$ itself has controlled $k$-th order derivatives for some $k$ depending on $p$. See Appendix \ref{Jezequel} for further details. 
        \item The proof of Theorem \ref{thm:specPic3} required control of $\Lc_{f, \kappa} - \Lc_{f_A}$ in the mixed operator norm $\vertiii{\cdot}$, which in turn required control on $d_{C^k}(f, f_A)$ for some $k$ depending on $L, p$.  So, the conclusions of Theorem \ref{thm:sectorialIntro} apply to $C^k$-small perturbations of the original CAT map $f_A$. 
    \end{enumerate}
\end{remark}

\section{Pulse localization for perturbations of CAT maps}\label{sec:dyadic}

\newcommand{\Ncrit}{N_{\rm crit}}

We now turn to Theorem \ref{thm:pulseLocalizationIntro}, concerning the localization of scalar mass, and Theorem \ref{thm:expShellLawIntro} on a version of Batchelor's law summing over radial shells of exponentially-growing width. Statements are given in Section \ref{subsec:statements4} immediately below. Our core tool, that of \emph{spectral distributions}, is presented next in Section \ref{subsec:spectralDistro4}. Proofs are given in Sections \ref{subsec:proofs4-1} and \ref{subsec:mainPropProof4}. 

\subsection{Statement of results}\label{subsec:statements4}

Let $A$ be as in the beginning of Section \ref{subsec:catMap3}, and let $f$ be a $C^\infty$ volume-preserving diffeomorphism; we will always assume $d_{C^\infty}(f, f_A) \leq \epsilon$, where $\epsilon \in (0,1]$ is small, and where $f_A : \T^2 \to \T^2$ is the hyperbolic CAT map  corresponding to $A$. 

Throughout, we assume that the driving term $b$ is of the form 
\[b = e_{k_0}\]
where $k_0$ is a fixed nonzero wavenumber. With $b$ fixed and $\kappa > 0$ small, the map $f$ satisfies $\kappa$-uniform $H^{-1}$-decay (Definition \ref{defn:kappaUnifH-1Decay2} and Corollary \ref{cor:pertCATCumBatchelor3}); let $ g_{f, \kappa}$ denote the corresponding statistically stationary scalar as in Theorem \ref{thm:cumLaw2}(a).  
With $k_0$ and $A$ fixed as above, let 
\[k_\ell = (A^{-T})^\ell k_0 \, , \qquad \ell \geq 1 \,. \]



\begin{theorem}\label{thm:pulseLocalization4}
  There exists $\bar \delta > 0$, depending only on the unperturbed map $f_A$ and the initial mode $k_0$, with the following properties. Let $\delta \in (0,\bar \delta)$; then, there exists $\zeta > 0$ such that for all $\kappa \in (0,1]$ and $\epsilon > 0$ sufficiently small, 
  it holds that 
  \begin{gather}\label{eq:pulseControl4}
  \E \sum_{\substack{k \notin \{ k_\ell\} \\ |k| \leq R_{\rm crit}}} |\hat g_{f, \kappa}(k)|^2  \lesssim \left( \max\{ \kappa, \epsilon\} \right)^{\delta} \, , 
  \end{gather}
  where
  \[R_{\rm crit} := \big(\max \{ \kappa, \epsilon\} \big)^{-\zeta} \,. \]
\end{theorem}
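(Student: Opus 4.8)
\emph{Reduction and setup.} By Lemma~\ref{lem:identityForProjection}, the left-hand side of \eqref{eq:pulseControl4} equals $\sum_{n\geq0}\Sigma_n$, where
\[\Sigma_n:=\sum_{\substack{k\notin\{k_\ell\}\\ |k|\le R_{\rm crit}}}\bigl|\widehat{\Lc_{f,\kappa}^n b}(k)\bigr|^2\ \le\ \min\Bigl\{\ \bigl\|\Lc_{f,\kappa}^n b-\widehat{\Lc_{f,\kappa}^n b}(k_n)\,e_{k_n}\bigr\|^2,\ \ \bigl\|\Pi_{\le R_{\rm crit}}\Lc_{f,\kappa}^n b\bigr\|^2\ \Bigr\},\]
the first bound holding since $\{k:k\notin\{k_\ell\}\}\subseteq\{k:k\ne k_n\}$ and the second trivially. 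Put $M:=\max\{\kappa,\e\}$, so $R_{\rm crit}=M^{-\zeta}$; since $|k_n|\asymp\l^n$ as $n\to\infty$, fix the integer $n_*:=\bigl\lfloor\tfrac{\zeta}{\log\l}|\log M|\bigr\rfloor$, so that (up to a $k_0$-dependent bounded shift) $|k_n|\le R_{\rm crit}$ precisely for $n\le n_*$. We may assume $\kappa$ small, since for $\kappa$ bounded below $\|e^{\kappa\Delta}\|<1$ uniformly on mean-zero functions, whence $\E\|g_{f,\kappa}\|^2=\sum_n\|\Lc_{f,\kappa}^n b\|^2$ and $R_{\rm crit}$ are bounded and \eqref{eq:pulseControl4} is trivial. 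The two regimes $n\le n_*$ and $n>n_*$ are handled by different mechanisms.

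\emph{Low-index regime $n\le n_*$.} Here $\e|k_n|\lesssim\e R_{\rm crit}\le M^{1-\zeta}\ll 1$, so $\Lc_{f,\kappa}$ acts nearly diagonally on the pulse modes: writing $f^{-1}=A^{-1}+\psi$ with $\|\psi\|_{C^m}\lesssim_m\e$, and using $\Lc_fe_{k_j}=e_{k_{j+1}}\cdot e^{2\pi i\,k_j\cdot\psi}$, $\|e^{2\pi i\,k_j\cdot\psi}-1\|_\infty\lesssim\e|k_j|$, and $\|e^{\kappa\Delta}\|\le1$, one gets $\Lc_{f,\kappa}e_{k_j}=c_je_{k_{j+1}}+\rho_j$ with $|c_j-1|+\|\rho_j\|\lesssim\e|k_j|$. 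Propagating these errors through the iteration defining $\Lc_{f,\kappa}^nb$ (using $\|\Lc_{f,\kappa}\|\le1$ to carry errors forward) gives $\bigl\|\Lc_{f,\kappa}^nb-\widehat{\Lc_{f,\kappa}^nb}(k_n)e_{k_n}\bigr\|\lesssim\e\sum_{j<n}|k_j|\lesssim\e|k_n|\asymp\e\l^n$, hence $\Sigma_n\lesssim(\e\l^n)^2$. Summing the geometric series (ratio $\l^2>1$), $\sum_{n\le n_*}\Sigma_n\lesssim(\e\l^{n_*})^2\lesssim(\e R_{\rm crit})^2\le M^{2(1-\zeta)}$.

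\emph{High-index regime $n>n_*$ --- the main obstacle.} Now $|k_n|>R_{\rm crit}$, so $\Sigma_n\le\|\Pi_{\le R_{\rm crit}}\Lc_{f,\kappa}^nb\|^2$; split $\Pi_{\le R_{\rm crit}}$ along the cones $\Cc_u,\Cc_s$ of Section~\ref{subsec:catMap3}. The $\Cc_u$ part is controlled by the anisotropic estimate: Theorem~\ref{thm:specPic3}(c)--(d) gives $\|\Lc_{f,\kappa}^nb\|_{\Hc^p}\le K r^n\|b\|_{\Hc^p}$ uniformly in $\kappa$, and since $|k|^{2p}\ge1$, $\sum_{k\in\Cc_u,\,|k|\le R_{\rm crit}}|\widehat{\Lc_{f,\kappa}^nb}(k)|^2\le K^2r^{2n}\|b\|_{\Hc^p}^2$, which is geometric in $n$ and sums over $n>n_*$ to $\lesssim r^{2n_*}=M^{2\zeta|\log r|/\log\l}$, a positive power of $M$ made $\ge M^\delta$ by taking the anisotropic exponent $p$ large. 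The $\Cc_s$ part is the crux, and the anisotropic norm is useless there --- it weights these modes \emph{down} by $|k|^{-2p}$, and likewise any homogeneous $H^{-q}$ estimate is only marginal, since the $H^{-q}$-decay rate of the CAT map is exactly $q\log\l$. Here one uses the spectral-distribution method of Section~\ref{subsec:spectralDistro4}: one follows the positive measure $\sigma_n:=\sum_k|\widehat{\Lc_{f,\kappa}^nb}(k)|^2\,\delta_k$ on $\Z^2$ under a single application of $e^{\kappa\Delta}\Lc_f$ --- which acts as the pushforward under $k\mapsto A^{-T}k$, blurred at frequency scale $\lesssim\e|k|$ with decay of any polynomial order transverse to that action (using $d_{C^m}(f,f_A)<\e$ for $m$ large), then damped by the multiplier $e^{-c\kappa|k|^2}$. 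Since $A^{-T}$ expands magnitudes in $\Cc_s$ by $\approx\l$, the $\Cc_s$-directed mass of $\sigma_n$ is carried out near $|k_n|\asymp\l^n$, and to leak back into $\{|k|\le R_{\rm crit}\}$ it must either ``fall'' across $\gtrsim\e^{-1}$ blur-widths (amplitude smaller than any fixed power of $\e$) or detour through the $\Cc_u$ sector, whose low-frequency content is already controlled above (costing a further factor $\e$); with the growing fall distance giving geometric-in-$n$ decay and the crude bound $\|\Lc_{f,\kappa}^nb\|^2\lesssim\kappa^{-1}e^{-2\gamma n}$ (from $\kappa$-uniform $H^{-1}$ decay and heat smoothing) dispatching very large $n$, the $\Cc_s$ contribution sums over $n>n_*$ to $\lesssim M^{c'}$ for some fixed $c'=c'(f_A,k_0)>0$. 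Carrying this out rigorously --- choosing the frequency-shell decomposition, establishing the kernel estimates for $e^{\kappa\Delta}\Lc_f$ between shells uniformly in $\kappa\in(0,1]$ and $\e$, and controlling the compounding over $n$ --- is the principal difficulty, and is done in Sections~\ref{subsec:proofs4-1} and \ref{subsec:mainPropProof4}.

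\emph{Conclusion.} Combining the three estimates, $\sum_{n\ge0}\Sigma_n\lesssim M^{2(1-\zeta)}+M^{2\zeta|\log r|/\log\l}+M^{c'}$. Taking $\bar\delta>0$ to be a quantity determined by $c'$ and the constants above (hence depending only on $f_A$ and $k_0$), for any $\delta\in(0,\bar\delta)$ one chooses $\zeta\in(0,1-\delta/2]$ small and $p$ large so that all three exponents are $\ge\delta$, yielding \eqref{eq:pulseControl4}.
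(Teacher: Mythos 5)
Your reduction via Lemma \ref{lem:identityForProjection} and your treatment of the low-index regime are sound (the direct mode-wise propagation $\|\Lc_{f,\kappa}^n b - \widehat{\Lc_{f,\kappa}^n b}(k_n)e_{k_n}\|\lesssim \e\sum_{j<n}|k_j|$ is an elementary stand-in for the paper's spectral centroid/variance estimates). But the high-index regime contains a genuine gap, and you say so yourself: the stable-cone contribution is "the crux" and "carrying this out rigorously ... is the principal difficulty, and is done in Sections \ref{subsec:proofs4-1} and \ref{subsec:mainPropProof4}." That deferral is not a proof, and moreover those sections do not carry out the cone-decomposition/kernel-estimate program you sketch; they do something structurally different.

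The root of the difficulty is your choice of cutoff. You split at the \emph{exit time} $n_*\approx \zeta|\log\eta|/\log\lambda$ (writing $\eta=\max\{\kappa,\e\}$), where the pulse leaves $\{|k|\le R_{\rm crit}\}$. With that split, Bernstein plus $\kappa$-uniform $H^{-1}$ decay gives $\sum_{n>n_*}R_{\rm crit}^2e^{-2\gamma n}\approx e^{2n_*(\log\lambda-\gamma)}$, which is useless because $\gamma\le\Lambda\approx\log\lambda$ (Lemma \ref{lem:compareGammaLambda2}) — exactly the "marginal" cancellation you identify. The resolution is not the blur/fall-distance analysis but a different split: the off-pulse bound $\sum_{k\ne k_n}|\hat\varphi_n(k)|^2\le \P(X_{\varphi_n}\ne k_n)\lesssim \Var(X_{\varphi_n})\lesssim \eta M^n$ (Proposition \ref{prop:mainVarCentEstimates4} plus Chebyshev) remains valid, and remains \emph{useful}, well after the pulse has exited the ball — it requires only $\eta M^n\ll1$, i.e.\ $n\le N_{\rm crit}:=(1-\delta)|\log\eta|/\log M$, which for small $\zeta$ is much larger than $n_*$. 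Only for $n>N_{\rm crit}$ does one fall back on Bernstein$+H^{-1}$, and there the sum is $R_{\rm crit}^2e^{-2\gamma N_{\rm crit}}=\eta^{-2\zeta+2\gamma(1-\delta)/\log M}$, which is $\le\eta^\delta$ once $\zeta$ is chosen small relative to $\gamma/\log M$ — $\zeta$ is a free parameter tuned at the end, not tied to the pulse's exit time. Decoupling the cutoff time from $R_{\rm crit}$ in this way removes the obstruction entirely; no cone splitting, anisotropic norms, or kernel estimates for $e^{\kappa\Delta}\Lc_f$ are needed in this proof. (A minor additional slip: your coefficient $c_j$ must also absorb the heat multiplier $e^{-4\pi^2\kappa|k_{j+1}|^2}$, so $|c_j-1|\lesssim \e|k_j|+\kappa|k_{j+1}|^2$; this is harmless for the off-pulse mass since $e^{\kappa\Delta}$ is Fourier-diagonal, but the stated bound $|c_j-1|\lesssim\e|k_j|$ is not correct as written.)
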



Next, we record a version of Batchelor's law summing over exponentially `thick' shells of wavenumbers. 

\begin{theorem}\label{thm:expShellLaw4}
  There exist $C, C' \geq 1$ such that the following holds for all $L > 1$ and $\epsilon, \kappa$ small. Define 
  \[\ell_{\rm crit} = \frac{C}{\log L} \left( | \log \max\{ \epsilon, \kappa\}| - C' \right) \,. \]
  Then, for all $\ell \leq \ell_{\rm crit}$, 
  \begin{align}\label{eq:exponentialShellEst4}
     \E \| \Pi_{[L^\ell, L^{\ell + 1}]} g_{f, \kappa}\|^2 = \frac{\log L}{\log \lambda} + O(1) \, , 
  \end{align}
  where $O(1)$ is an error term independent of $L$ and $\ell$. 
\end{theorem}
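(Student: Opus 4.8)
\textbf{Proof proposal for Theorem \ref{thm:expShellLaw4}.}

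The plan is to start from the exact formula $\E \| \Pi_{[L^\ell, L^{\ell+1}]} g_{f,\kappa}\|^2 = \sum_{n=0}^\infty \| \Pi_{[L^\ell, L^{\ell+1}]} \Lc_{f,\kappa}^n b\|^2$ from Lemma \ref{lem:identityForProjection}, and to split the sum over $n$ into three ranges according to how the Fourier mass of the packet $b_n := \Lc_{f,\kappa}^n b$ sits relative to the shell $[L^\ell, L^{\ell+1}]$. For $f = f_A$ one has $b_n = e_{k_n}$ with $|k_n| \approx \lambda^n$, up to the diffusive damping factor $e^{-\kappa C(|k_1|^2 + \cdots + |k_n|^2)}$, so a packet contributes essentially $1$ to the shell if $L^\ell \lesssim \lambda^n \lesssim L^{\ell+1}$ and $n$ is below the dissipative cutoff, and contributes essentially $0$ otherwise. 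Counting the integers $n$ with $\lambda^n \in [L^\ell, L^{\ell+1}]$ gives $\frac{\log L}{\log\lambda} + O(1)$, which is the claimed leading term. For general $f$ near $f_A$ one no longer has pure Fourier modes, so the task is to show that each packet $b_n$ is still \emph{spectrally localized} to a dyadic band around $|k| \approx \lambda^n$ with errors that are summable in $n$ and uniformly small; this is exactly where the machinery of spectral distributions from Section \ref{subsec:spectralDistro4} enters, and presumably where Theorem \ref{thm:pulseLocalization4} (pulse localization) is used to say that for $n$ with $|k_n| \leq R_{\rm crit}$ the mass of $b_n$ concentrated outside a band near $k_n$ is $O((\max\{\kappa,\epsilon\})^\delta)$, hence negligible after summation.

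Concretely I would proceed as follows. First, establish a two-sided ``in-band'' estimate: there is a window $W$ (a bounded number of dyadic scales) such that for every $n$ with $\lambda^n$ comfortably inside $[L^\ell, L^{\ell+1}]$ and $n$ below the dissipative/critical time, $\| \Pi_{[L^\ell, L^{\ell+1}]} b_n\|^2 = \|b_n\|^2 + (\text{error})$, and that $\|b_n\|^2 = 1 + o(1)$ in this range — the latter following from the energy estimate in Lemma \ref{lem:enhancedDissipLower2} (no appreciable dissipation before time $\tau_\kappa$) together with $\|\Lc_f\| = 1$. Second, the ``boundary'' packets — those $n$ for which $\lambda^n$ is within a fixed ratio of $L^\ell$ or $L^{\ell+1}$ — number only $O(1)$ and each contributes something in $[0,1]$, so they are absorbed into the $O(1)$ term; this is the mechanism that produces the $O(1)$ rather than $o(1)$ error and why exponentially-spaced shells are needed (a fixed-width band would straddle $\log\lambda / (\text{band width}) \to \infty$-many ``boundary'' situations). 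Third, the ``out-of-band'' packets — $n$ with $\lambda^n$ far below $L^\ell$ or far above $L^{\ell+1}$, but still $|k_n| \leq R_{\rm crit}$ — must contribute a geometrically small amount; here one invokes the angular/pulse-localization results (Theorems \ref{thm:sectorialIntro}, \ref{thm:pulseLocalization4}) to say the mass of $b_n$ at wavenumbers of size $\approx L^\ell$ or $\approx L^{\ell+1}$ decays like a negative power of the scale separation $\lambda^{|{\rm dist}|}$, which sums to $O((\max\{\kappa,\epsilon\})^\delta) = O(1)$. Fourth, the tail $n$ with $|k_n| > R_{\rm crit}$, equivalently $n$ past the critical time: use the dissipative-range control \eqref{eq:controlDissipRange} of Theorem \ref{thm:cumLaw2} (or directly the exponential damping in \eqref{eq:statLawIntro1}) to bound the total remaining contribution by a constant, and note this is precisely why the statement is restricted to $\ell \leq \ell_{\rm crit}$ — for larger $\ell$ the shell lies in the dissipative range and the count of surviving packets is no longer $\frac{\log L}{\log\lambda}$.

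The main obstacle I anticipate is Step 1 for general $f$: making precise and uniform (in $n$, $\kappa$, $\epsilon$, $L$, $\ell$) the claim that the packet $b_n = \Lc_{f,\kappa}^n b$ has nearly all its $L^2$ mass in a dyadic band centered at scale $\lambda^n$ and nearly none elsewhere, and that this in-band mass is $1 + o(1)$. The angular sparsity Theorem \ref{thm:sectorialIntro} controls mass away from $\Span\{v^s\}$ but not the radial distribution within the bad sector, so one genuinely needs the spectral-distribution refinement of Section \ref{subsec:spectralDistro4} and the pulse-localization Theorem \ref{thm:pulseLocalization4} to pin the mass to a single exponentially-located pulse $k_n$; coordinating the error exponents $\delta, \zeta$ from that theorem with the geometric sum over $n$ — and checking that all errors remain summable right up to $\ell_{\rm crit}$ with an $\ell$- and $L$-independent constant — is the delicate bookkeeping at the heart of the proof. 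A secondary subtlety is the lower bound: one must rule out destructive interference, but since Lemma \ref{lem:identityForProjection} already diagonalizes $\E\|\cdot\|^2$ as an honest sum of $\|\Pi b_n\|^2 \geq 0$ with no cross terms, there is in fact nothing to cancel, and the lower bound reduces to counting the in-band packets from below — so the genuine difficulty is concentrated in the upper-bound/out-of-band estimates and the uniform packet-localization input.
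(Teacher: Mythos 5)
Your plan matches the paper's proof in all essentials: the paper also starts from the diagonalized sum of Lemma \ref{lem:identityForProjection}, obtains the leading term $\frac{\log L}{\log\lambda}+O(1)$ by counting the indices $n$ with $|k_n|$ in (a slightly shrunken/enlarged copy of) the shell, controls the out-of-band packets $n\leq N_{\rm crit}$ by Chebyshev applied to the spectral variance $\Var(X_{\varphi_n})\lesssim \eta M^n$ (Proposition \ref{prop:mainVarCentEstimates4} — the ``uniform packet-localization input'' you correctly identify as the crux), handles the tail $n>N_{\rm crit}$ by Bernstein plus $\kappa$-uniform $H^{-1}$ decay (which is what forces $\ell\leq\ell_{\rm crit}$, rather than the dissipative-range bound \eqref{eq:controlDissipRange} you cite), and gets the lower bound from $\|\varphi_n\|^2\geq 1-O(\eta M^n)$ exactly as you describe. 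The only cosmetic difference is that the paper reruns the per-packet Chebyshev argument directly instead of invoking Theorems \ref{thm:sectorialIntro} or \ref{thm:pulseLocalization4} as black boxes.
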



\subsection{Background: a probabilistic framework for Fourier mass} \label{subsec:spectralDistro4}


\begin{definition} 
Let $\varphi \in L^2$. We define the \emph{spectral distribution} $\PP_\varphi$ of $\varphi$ to be the probability measure on $\Z^2$ for which \[\PP_\varphi(\{ k \}) = \frac{1}{\| \varphi\|^2} |\hat \varphi(k)|^2 \,. \] 
\end{definition}
Below and what follows, we will write $X_\varphi$ for a $\Z^2$-valued random variable of some probability space $(\Omega, \Fc, \P)$ for which \[\P(X_\varphi = k) = \PP_\varphi(\{ k \}) \, ,\] i.e., $\PP_\varphi$ is the empirical law of $X_\varphi$. 

\begin{definition} \ 
  \begin{itemize}
    \item[(a)] The \emph{spectral centroid} $\E(X_\varphi)$ of $\varphi$ is defined to be the mean of the distribution $\PP_\varphi$, i.e., 
    \[\E X_\varphi = \sum_{k \in \Z^2} k \PP_\varphi(\{ k \}) = \frac{1}{\| \varphi\|^2} \sum_{k \in \Z^2} k |\hat \varphi(k)|^2 \,. \]
    \item[(b)] The \emph{spectral variance} of $\varphi$ is the variance $\Var(X_\varphi)$ of the distribution $\PP_\varphi$, i.e., 
    \[\Var(X_\varphi) = \sum_{k \in \Z^2} |k - \E X_\varphi|^2 \PP_\varphi(\{ k \}) \, . \]
  \end{itemize}
\end{definition}
Note that, like the usual notion of variance, the spectral variance satisfies the following alternative characterization: 
\[\Var(X_\varphi) = \E [|X_\varphi|^2] - | \E X_\varphi|^2 \,. \]

These definitions are inspired by the notions of power spectral density and spectral centroid in signal processing \parencite{oppenheimSignalsSystemsInference2016}.

We record below some basic consequences of these definitions, each a rendering of a classical probability result in the language defined above. 

\begin{lemma}\label{lem:basicSpectralProps4}\ 
  \begin{itemize}
    \item[(i)] For all $z \in \R^2$, we have that \[\Var(X_\varphi) \leq \E|X_\varphi - z|^2 \, ,\]
    with equality if and only if $z = \E X_\varphi$. 
    \item[(ii)] (Chebyshev's Inequality) For any $a > 0$, it holds that 
    \[\P(|X_\varphi - \E X_\varphi| > a ) \leq \frac{\Var(X_\varphi)}{a^2} \,. \]
  \end{itemize}
\end{lemma}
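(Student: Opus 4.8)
The plan is to recognize that Lemma~\ref{lem:basicSpectralProps4} is simply the spectral-distribution rendering of two classical facts about $\Z^2$-valued random variables — the variance-minimization property of the mean, and Chebyshev's inequality — so the proof amounts to unwinding the definitions of $\E X_\varphi$ and $\Var(X_\varphi)$ in terms of $\PP_\varphi$ and reproducing the textbook computations.

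For part (i), I would fix $z \in \R^2$ and expand, using linearity of $\E$ with respect to $\PP_\varphi$ (equivalently, distributing the sum $\sum_k |k-z|^2 \PP_\varphi(\{k\})$):
\[
\E |X_\varphi - z|^2 = \E|X_\varphi|^2 - 2\, z \cdot \E X_\varphi + |z|^2 .
\]
Completing the square in $z$ and invoking the identity $\Var(X_\varphi) = \E[|X_\varphi|^2] - |\E X_\varphi|^2$ recorded just before the lemma yields
\[
\E|X_\varphi - z|^2 = \Var(X_\varphi) + |z - \E X_\varphi|^2 .
\]
Since $|z - \E X_\varphi|^2 \ge 0$, with equality exactly when $z = \E X_\varphi$, both the inequality and the equality case follow at once.

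For part (ii), with $m := \E X_\varphi$ I would argue as in the usual proof of Chebyshev's inequality, bounding the tail sum against the second moment:
\[
\P(|X_\varphi - m| > a) = \sum_{k : |k-m| > a} \PP_\varphi(\{k\}) \le \sum_{k : |k-m| > a} \frac{|k-m|^2}{a^2}\, \PP_\varphi(\{k\}) \le \frac{1}{a^2} \sum_{k \in \Z^2} |k-m|^2 \, \PP_\varphi(\{k\}) = \frac{\Var(X_\varphi)}{a^2} .
\]

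There is no genuine obstacle here: the content is entirely classical. The only points meriting a line of care are notational — checking that $\E$ and $\Var$ built from $\PP_\varphi$ satisfy the usual linearity and translation identities — together with a remark on finiteness: for a general $\varphi \in L^2$ the quantity $\E[|X_\varphi|^2]$ may diverge, in which case the inequalities in (i) and (ii) hold trivially, while in the intended applications $\hat\varphi$ decays rapidly enough that all moments are finite and the manipulations above are unambiguous.
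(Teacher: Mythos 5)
Your proof is correct and is exactly the standard argument (bias--variance decomposition for (i), Markov/Chebyshev for (ii)); the paper itself offers no proof, simply recording the lemma as a rendering of classical probability facts, so your write-up supplies precisely what was left implicit.
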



\subsection{Proof of Theorems \ref{thm:pulseLocalization4} and \ref{thm:expShellLaw4}} \label{subsec:proofs4-1}

We now turn to the use of the circle of ideas above towards controlling the distribution of Fourier mass of passive scalars. 
The main ingredient is the following estimate, which controls the centroid $\E X_{\varphi_n}$ and variance $\Var(X_{\varphi_n})$ of the iterated scalars \[\varphi_n := \Lcfk^n b, \varphi_0 := b = e_{k_0} \,. \] Its proof is deferred to Section \ref{subsec:mainPropProof4}.



\begin{proposition}\label{prop:mainVarCentEstimates4}
  There exists a constant $M > 1$, depending only on the unperturbed CAT map $f_A$, with the following property. Let $\epsilon, \kappa > 0$ be sufficiently small. For $n \lesssim_M |\log \kappa| + 1$, it holds that 
  \begin{gather*}
    \E X_{\varphi_n} = (A^{-T})^n k_0 + O(\max\{\kappa, \epsilon\} M^n) \, , \quad \text{ and}\\ 
    \Var(X_{\varphi_n}) \lesssim \max\{ \kappa, \epsilon\} M^n \,. 
  \end{gather*}
\end{proposition}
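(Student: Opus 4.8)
The plan is to track the spectral distributions $\PP_{\varphi_n}$ inductively, using the fact that $\Lc_{f,\kappa} = e^{\kappa\Delta}\circ\Lc_f$ acts on Fourier coefficients in a nearly-explicit way. First I would decompose $f = f_A + \epsilon\, r$ with $r$ a $C^\infty$ vector field of bounded norm, and write the transfer operator as $\Lc_f = \Lc_{f_A} + \epsilon E$, where $E$ is an error operator that is bounded $H^{s+1}\to H^{s}$ (this is the estimate $\|\Lc_{f_1}\psi - \Lc_{f_2}\psi\|_{H^s}\lesssim d_{C^{\lceil|s|\rceil}}(f_1^{-1},f_2^{-1})\|\psi\|_{H^{s+1}}$ already used in the proof of Theorem \ref{thm:specPic3}). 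Since $\Lc_{f_A}$ sends $e_k\mapsto e_{A^{-T}k}$ exactly (equivalently $\widehat{\Lc_{f_A}\psi}(k) = \hat\psi(A^T k)$), the unperturbed iterate $\Lc_{f_A}^n b = e_{(A^{-T})^n k_0}$ is a pure mode with centroid exactly $(A^{-T})^n k_0$ and zero spectral variance. The whole content is to show the twin perturbations — the $O(\epsilon)$ deformation of $f$ and the $O(\kappa)$ heat smoothing — only spread the Fourier mass by $O(\max\{\kappa,\epsilon\}M^n)$ after $n$ steps.

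The key mechanism is a growth/contraction dichotomy in Fourier space governed by the cones $\Cc_u,\Cc_s$ of Section \ref{subsec:catMap3}: the map $k\mapsto A^T k$ expands wavenumbers in $\Cc_u$ by a factor comparable to $\lambda$ and contracts those in $\Cc_s$. I would set up the induction on the two "moments" $m_n := \E|X_{\varphi_n} - (A^{-T})^n k_0|$ (a crude $\ell^1$-type deviation of the mass from the ideal pulse location) and $V_n := \Var(X_{\varphi_n})$, or more robustly on a weighted norm like $\sum_k |k - (A^{-T})^n k_0|^2 |\hat\varphi_n(k)|^2$, which is exactly $\|\varphi_n\|^2\,\Var(X_{\varphi_n}) + \|\varphi_n\|^2|\E X_{\varphi_n} - (A^{-T})^n k_0|^2$ up to the centroid shift. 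At each step: (a) applying $\Lc_{f_A}$ rigidly translates the relevant center to $(A^{-T})^{n+1}k_0$ and rescales deviations — mass near the pulse, which by the cone geometry sits essentially along $\Span\{v^s_T\}$ after a few steps, stays near the pulse; (b) applying the error operator $\epsilon E$ injects new mass of total size $O(\epsilon\|\varphi_n\|_{H^1})$, but $\|\varphi_n\|_{H^1}\le M^n\|b\|_{H^1}$ by the crude bound $\|\Lc_{f,\kappa}\|_{H^1}\le e^\Lambda$ from Lemma \ref{lem:enhancedDissipLower2}; (c) applying $e^{\kappa\Delta}$ multiplies $\hat\varphi(k)$ by $e^{-c\kappa|k|^2}$, which for $|k|\lesssim\kappa^{-1/2}$ — guaranteed in the regime $n\lesssim_M|\log\kappa|$ since then $|(A^{-T})^nk_0|\lesssim\lambda^{-n}$ stays bounded, wait, more carefully $|k_n|$ for the relevant modes is controlled — costs at most a multiplicative $1+O(\kappa M^{2n})$ distortion and is handled via $\|e^{\kappa\Delta}-I\|_{H^{s+1}\to H^s}\lesssim\sqrt\kappa$. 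Summing the per-step error contributions $\epsilon M^n + \kappa M^{2n}$ geometrically gives the claimed $O(\max\{\kappa,\epsilon\}M^n)$, after renaming $M$.

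The main obstacle I anticipate is controlling the denominator $\|\varphi_n\|^2$ from below uniformly: the spectral distribution is a \emph{normalized} object, so $\E X_{\varphi_n}$ and $\Var(X_{\varphi_n})$ only behave well if $\|\varphi_n\|$ doesn't collapse. For $n\lesssim_M|\log\kappa|$ the lower bound $\|\Lc_{f,\kappa}^n b\|^2 \ge \frac12\|b\|^2$ is exactly Lemma \ref{lem:enhancedDissipLower2} (applied with $f$ in place of the CAT map — one checks $\Lambda$ is stable under small perturbations), so the restriction on $n$ in the Proposition is precisely what makes this work, and I would lean on that lemma. A secondary technical point is bookkeeping the centroid versus the variance separately: the centroid estimate needs cancellation (the rigid translation by $\Lc_{f_A}$ preserves the centroid of the bulk mass, and only the $O(\epsilon)$ and $O(\kappa)$ perturbations shift it), whereas the variance estimate is purely a second-moment bound and doesn't need cancellation — so I would prove the variance bound first by the weighted-norm induction above, then deduce the centroid bound by noting $|\E X_{\varphi_n} - (A^{-T})^nk_0|^2 \le \E|X_{\varphi_n} - (A^{-T})^nk_0|^2$ and controlling the right side by the same weighted norm divided by $\|\varphi_n\|^2$. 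Finally I would double-check that the cone geometry genuinely prevents "escaped" mass in $\Cc_u$ from being re-amplified catastrophically — this is where the quantitative Lasota–Yorke structure of Proposition \ref{prop:uniformLasotaYorke3} could be borrowed if the elementary argument proves too lossy, though I expect the crude $M^n$ bounds suffice for the stated (non-sharp) conclusion.
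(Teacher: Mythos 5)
Your overall plan matches the paper's: decompose $\Lcfk$ into the exact action of $\Lc_{f_A}$ on Fourier modes plus $O(\epsilon)$ and $O(\kappa)$ perturbations, prove time-one estimates on the spectral centroid and variance (this is exactly the content of the paper's Lemmas \ref{lem:centroidEstTimeOne4} and \ref{lem:varianceTimeOne4}), control the higher Sobolev norms crudely by $M^{n}$, lower-bound $\|\varphi_n\|$ from the same energy estimate underlying Lemma \ref{lem:enhancedDissipLower2} (this is the paper's Lemma \ref{lem:estErrorVarianceCentroid4}), and iterate. Your weighted-norm bookkeeping $\sum_k|k-k_n|^2|\hat\varphi_n(k)|^2$ is a perfectly reasonable alternative for getting the variance bound, and the $\kappa M^{2n}$ vs.\ $\eta M^n$ discrepancy in your per-step error is absorbed by renaming $M$, as the paper itself does.

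The genuine gap is in the centroid bound. Deducing it from the second moment via $|\E X_{\varphi_n}-k_n|^2 \le \E|X_{\varphi_n}-k_n|^2 \lesssim \eta M^n$ (where $\eta=\max\{\kappa,\epsilon\}$) only yields $|\E X_{\varphi_n}-k_n|\lesssim\sqrt{\eta}\,M^{n/2}$, which is square-root and not linear in $\eta$ as the Proposition claims --- a strictly weaker statement. To recover the linear bound you must track the centroid separately and exploit exact cancellation: since $\Lc_{f_A}$ permutes Fourier modes, the identity $\E X_{\Lc_{f_A}\varphi}=A^{-T}\E X_\varphi$ holds with \emph{equality}, so each per-step centroid error $|\E X_{\ell+1}-A^{-T}\E X_\ell|$ is $O(\eta M^\ell)$, and one telescopes
\[|\E X_n - k_n| \le \sum_{\ell=0}^{n-1}|A|^{n-1-\ell}\,\bigl|\E X_{\ell+1}-A^{-T}\E X_\ell\bigr| \lesssim \eta\,|A|^{n-1}M^{n-1} \,. \]
You observe this cancellation yourself (``the rigid translation by $\Lc_{f_A}$ preserves the centroid of the bulk mass''), but the proposed Jensen step throws it away: the variance estimate tolerates the one-sided inequality $\Var(X_{\Lc_{f_A}\varphi})\le|A|^2\Var(X_\varphi)$, whereas the centroid estimate needs the exact identity precisely so that the unperturbed dynamics contributes no error and you avoid losing a square root in $\eta$.
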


\begin{proof}[Proof of Theorem \ref{thm:pulseLocalization4}]
  Below, $\delta, \zeta > 0$ are fixed, with constraints on them to be specified as we go along. We write $\eta = \max\{ \kappa, \epsilon\}$, so that ultimately $R_{\rm crit} = |\log \eta|^\zeta$. Let $\Pi^\perp$ denote orthogonal projection onto the span of modes $k \notin \{ k_\ell\}$, and $\Pi_{\leq R_{\rm crit}}$ the projection onto the span of modes $k$ with $|k| \leq R_{\rm crit}$. Next, take $\epsilon$ small enough so that, by Theorem \ref{thm:specPic3}, the mapping $f$ admits $\kappa$-uniform $H^{-1}$-decay at some rate $\gamma$ (Definition \ref{defn:kappaUnifH-1Decay2}) uniformly over all $f$ with $d_{C^\infty}(f, f_A) < \epsilon$.

  By Lemma \ref{lem:identityForProjection}, the LHS of \eqref{eq:pulseControl4} can be represented as
  \[\E \|\Pi^\perp \Pi_{\leq R_{\rm crit}} g_{f, \kappa}\|^2 = \sum_{n = 0}^\infty \| \Pi^\perp \Pi_{\leq R_{\rm crit}} \varphi_n \|^2 \, . \]
  We split the RHS sum into the pieces $\{ n < N_{\rm crit}\}, \{ n \geq N_{\rm crit}\}$, where the \emph{critical time} $N_{\rm crit}$ will be specified shortly. 
  
  For the $n$-th summand, $n \leq N_{\rm crit}$, we have the upper bound
  \begin{align*}
    \sum_{k \neq k_n} | \hat \varphi_n(k)|^2 & = \| \varphi_n\|^2 \P(X_{\varphi_n} \neq k_n) = \| \varphi_n\|^2 \P\left(|X_{\varphi_n} - k_n| > \frac12\right) \\ 
    & \leq \P\left(| X_{\varphi_n} - \E X_{\varphi}| > \frac12 - |k_n - \E X_{\varphi_n}| \right) \,. 
  \end{align*}
  By Chebyshev's inequality (Lemma \ref{lem:basicSpectralProps4}(b)) and assuming $\frac12 - |k_n - \E X_{\varphi_n}| > 0$, we obtain the upper bound
  \begin{align*}
    \left( \frac12 - |k_n - \E X_{\varphi_n}| \right)^{-2} \Var(X_{\varphi_n}) \,, 
  \end{align*}
  which itself is controlled 
  \begin{align*}
    \lesssim_{\neg n} \eta M^n \quad \text{ if } \quad |k_n - \E X_{\varphi_n}| < \frac{1}{10} \,. 
  \end{align*}
  By Proposition \ref{prop:mainVarCentEstimates4}, the latter condition is guaranteed if $\eta M^n \ll 1$, and so we conclude that $\eta M^{\Ncrit} \ll 1$ implies  
  \[\sum_{n = 0}^{\Ncrit} \| \Pi^\perp \Pi_{\leq R_{\rm crit}} \varphi_n \|^2 \lesssim \eta M^{\Ncrit} \,. \]
Fixing 
\[\Ncrit = \frac{(1 - \delta) |\log \eta|}{\log M} \, , \]
  guarantees 
\[\sum_{n = 0}^{\Ncrit} \| \Pi^\perp \Pi_{\leq R_{\rm crit}} \varphi_n \|^2  \lesssim \eta M^{\Ncrit} \lesssim \eta^{\delta} \,. \]

To control $n > N_{\rm crit}$, by Bernstein's inequality and $\kappa$-uniform $H^{-1}$-decay we bound
\begin{align}\label{eq:controlHighModes4pulse}
  \| \Pi_{\leq R_{\rm crit}} \varphi_n\|^2 \leq R_{\rm crit}^2 \| \varphi_n\|^2_{H^{-1}} \lesssim R_{\rm crit}^2 e^{- 2 n \gamma} \, , 
\end{align}
hence 
\[\sum_{n > \Ncrit} \| \Pi^\perp \Pi_{\leq R_{\rm crit}} \varphi_n \|^2 \lesssim R_{\rm crit}^2 e^{- 2 \Ncrit \gamma} \,. \]
Expanding and plugging in $R_{\rm crit} = \eta^{-\zeta}$, we have 
\[R_{\rm crit}e^{ \Ncrit \gamma} = \eta^{-\zeta + \frac{\gamma(1 - \delta)}{\log M} } \,. \]
We shall set
\[\bar \delta := \frac{\gamma}{\log M + \gamma} \, , \qquad \zeta = \frac{\gamma}{2\log M} - \frac{\delta}{2} \left( 1 + \frac{\gamma}{\log M} \right)  \, , \]
so that the constraint $\delta \in (0,\bar \delta)$ ensures $\zeta > 0$ and $-\zeta + \frac{\gamma(1 - \delta)}{\log M} \geq \delta$. With this, the RHS of \eqref{eq:controlHighModes4pulse} is bounded $\lesssim \eta^\delta$, completing the proof. 
\end{proof}

\begin{proof}[Proof of Theorem \ref{thm:expShellLaw4}]
  As before, $\eta := \max\{ \kappa, \epsilon\}$. Let $\ell_{\rm crit} \geq 1$, to be specified as we go along. Below, $0 \leq \ell < \ell_{\rm crit}$ will be fixed; for short, write $\Pi_\ell = \Pi_{[L^\ell, L^{\ell + 1}]}$. 
  We also introduce the following notation: 
  \begin{gather*}
    \mathcal{K}_\ell^\circ = \{ n \geq 0 : L^\ell+1 \leq |k_n| \leq L^{\ell + 1} - 1\} \, , \\ 
    \overline{\mathcal{K}_\ell} = \{ n \geq 0 : L^\ell - 1 \leq |k_n| \leq L^{\ell + 1} + 1 \} \, , 
  \end{gather*}
  noting that the cardinalities $|\mathcal{K}_\ell^\circ|, |\overline{\mathcal{K}_\ell}|$ are each 
  \begin{align*}
    =  {\log L \over \log \lambda} + O_{\neg L}(1) \, , 
  \end{align*}
  where $O_{\neg L}(1)$ depends only on $A$ and $k_0$.


  Using Lemma \ref{lem:identityForProjection}, 
  \begin{align*}
    \E \| \Pi_{\ell} g_{f, \kappa}\|^2 = \sum_{n = 0}^\infty \| \Pi_\ell \varphi_n\|^2 \,. 
  \end{align*}
  In parallel to the proof of Theorem \ref{thm:pulseLocalization4}, we split the latter sum into $\{ n \leq \Ncrit\}, \{ n > \Ncrit\}$, where $\Ncrit$ is to be determined as we go. 

  We begin with the bound of $\E \| \Pi_{\ell} g_{f, \kappa}\|^2$ from above.  Starting with $\{ n \leq \Ncrit\}$, and using that $\| \varphi_n\| \leq 1$ for all $n$, 
  \begin{align*}
    \sum_{n = 0}^{\Ncrit} \| \Pi_\ell \varphi_n\|^2 \leq |\overline{\mathcal{K}_\ell}| + \sum_{\substack{n \notin \overline{\mathcal{K}_\ell} \\ n \leq \Ncrit}} \| \Pi_\ell \varphi_n\|^2  
  \end{align*}
  The term $|\overline{\mathcal{K}_\ell}|$ results in the primary contribution to 
  \eqref{eq:exponentialShellEst4}. For each summand $n \notin \overline{\mathcal{K}_\ell}$, we estimate
  \begin{align*}
    \| \Pi_\ell \varphi_n\|^2 & = \| \varphi_n\|^2 \P(|X_{\varphi_n}| \in [L^\ell, L^{\ell + 1}])  \\ 
    & \leq \P \left( |X_{\varphi_n} - k_n| \geq 1\right) \\ 
    & \leq \P \left( |X_{\varphi_n} - \E X_{\varphi_n}| \geq 1 - |k_n - \E X_{\varphi_n}| \right) \\ 
    & \leq {\Var(X_{\varphi_n}) \over (1 - |k_n - \E X_{\varphi_n}| )^2} \quad \text{if } |\E X_{\varphi_n} - k_n| < 1 \, . 
  \end{align*}
  By Proposition \ref{prop:mainVarCentEstimates4}, we are assured $|\E X_{\varphi_n} - k_n| \ll 1$ so long as $\eta M^{\Ncrit} \ll 1$, for which it suffices to choose $\Ncrit \leq | \log \eta | / \log M -K$, where $K > 0$ is sufficiently large (and independent of $L$). Imposing this constraint on $\Ncrit$ and summing over $n \leq \Ncrit$ gives
  \begin{align*}
    \sum_{\substack{n \notin \overline{\mathcal{K}_\ell} \\ n \leq \Ncrit}} \| \Pi_\ell \varphi_n\|^2  \lesssim \eta M^{\Ncrit} \ll 1 \,, 
  \end{align*}
  noting the above bound is independent of $L$. 
  
  The sum over $\{ n > \Ncrit\}$ is bounded via Bernstein's inequality: 
  \begin{align*}
    \sum_{n > \Ncrit} \| \Pi_\ell \varphi_n\|^2 & \leq \sum_{n > \Ncrit} L^{2\ell} \| \varphi_n\|^2 \\ 
    & \lesssim L^{2 \ell} e^{-2 \Ncrit \gamma} \, , 
  \end{align*}
  where $\gamma$ is as in the proof of Theorem \ref{thm:expShellLaw4}. 
  To ensure the above is $O(1)$ uniformly in $L$, it will be enough to take \[\ell_{\rm crit} = \frac{\Ncrit \gamma}{\log L} \,. \]
  Note that, while a further adjustment to $\Ncrit$ will be made below, the relationship between $\ell_{\rm crit}$ and $\Ncrit$ is now fixed. 
  
  For the lower bound, 
  \begin{align*}
    \sum_{n = 0}^\infty \| \Pi_\ell \varphi_n\|^2 \geq \sum_{n \in \mathcal{K}_\ell^\circ} \| \Pi_\ell \varphi_n\|^2 \,. 
  \end{align*}
  Writing $\Pi_\ell^\perp = I - \Pi_\ell$, note that $n \in \mathcal{K}_\ell^\circ$ implies 
  \begin{align*}
    \| \Pi_\ell^\perp\varphi_n\|^2 &= \| \varphi_n\|^2 \P(X_{\varphi_n} \notin[L^\ell, L^{\ell + 1}]) \\ 
    & \leq \P(|X_{\varphi_n} - k_n| \geq 1)  \\ 
    & \leq \P(|X_{\varphi_n} - \E X_{\varphi_n}| \geq 1 - |k_n - \E X_{\varphi_n}| ) \\ 
    & \leq {\Var(X_{\varphi_n}) \over (1  - |k_n - \E X_{\varphi_n}|)^2}
  \end{align*}
  provided, in the last line, that $|k_n - \E X_{\varphi_n}| < 1$. We point out, though, that $\ell + 1 \leq \ell_{\rm crit}$; since $n \in \mathcal{K}_\ell^\circ$ and since $|k_n| \sim \lambda^n$, it follows that 
  \[n \leq \ell_{\rm crit} \frac{\log L}{\log \lambda} + O(1) = \frac{\Ncrit \gamma}{\log \lambda} + O(1) \,. \]
  Perhaps on shrinking $\Ncrit$ further by an $O(1)$ term, uniformly in $L$, we can force $|k_n - \E X_{\varphi_n}| \leq 1/10$, say, so that in all we have $\| \Pi_\ell^\perp \varphi_n\|^2 \lesssim \Var(X_{\varphi_n}) \lesssim \eta M^{n}$. This implies that for $n \in \mathcal{K}_\ell^\circ$, 
  \begin{align*}
    \| \Pi_\ell \varphi_n\|^2 \geq \| \varphi_n\|^2 - O(\eta M^n) \, . 
  \end{align*}
  Borrowing from the proof of Lemma \ref{lem:estErrorVarianceCentroid4} (which is used to prove Proposition \ref{prop:mainVarCentEstimates4}),  $\| \varphi_n\|^2$ is bounded from below like 
  \begin{align*}
    \| \varphi_n\|^2 &\geq 1 - O(\kappa \sum_{\ell=0}^{n-1} \| \varphi_\ell\|_{H^1}^2)  \geq 1 + O(\eta M^n)
  \end{align*}
  where $M$ is as in Proposition \ref{prop:mainVarCentEstimates4}. 
  Summing up over $n \in \mathcal{K}^\circ_\ell$ now gives
  \begin{align*}
    \sum_{n \in \mathcal{K}_\ell^\circ} \| \Pi_\ell \varphi_n\|^2 & \geq |\mathcal{K}^\circ_\ell| + O(\eta M^{\Ncrit}) = \frac{\log L}{\log \lambda} + O(1) 
  \end{align*}
  as desired. 
\end{proof}

\subsection{Proof of Proposition \ref{prop:mainVarCentEstimates4}}\label{subsec:mainPropProof4}

\subsubsection*{Setup and preliminary estimates}

Below, $f$ is a fixed smooth, volume-preserving diffeomorphism of $\T^2$ with $d_{C^\infty}(f, f_A) < \epsilon$, where $\epsilon$ is a small parameter to be adjusted a finite number of times in what follows. At times we will abuse notation slightly and write 
\[\| f \|_{C^m} = \max_{1 \leq k \leq m} \sup_{x \in \T^2} | D^k f(x)| \,. \]

In what follows, we will make use of the Sobolev spaces $W^{k, p}$ defined for $\varphi \in C^\infty$ from the norm 
\[\| \varphi\|_{W^{k,p}} = \| \varphi \|_{W^{k-1,p}} + \left( \int_{\T^2} |D^k \varphi(x)|^p\right)^{1/p} \, , \]
recalling that for mean-zero functions, $\| \cdot \|_{W^{k,2}}$ is equivalent to the homogeneous $H^k$ norm $\| \cdot \|_{H^k}$ by Poincar\'e's inequality. 

We now record the following preliminary estimates. Below, 
\[\Rc_f := \Lc_f - \Lc_{f_A}\]
is the difference in transfer operators between $f$ and the CAT map $f_A$. 

\begin{lemma}\label{lem:sobolevEst4} For any $\varphi \in C^\infty$ with mean zero and for any $\kappa \geq 0$, the following hold. 
  \begin{itemize}
    \item[(a)] \begin{align*} 
      \| \Lc_{f,\kappa} \varphi\|_{W^{m, 1}} & \lesssim_m \| f \|_{C^m} \| \varphi\|_{W^{m,1}}  \\ 
    \| \Lc_{f,\kappa} \varphi\|_{H^m} & \lesssim_m \| f \|_{C^m} \| \varphi\|_{H^m}
  \end{align*}
  \item[(b)] For any $k \in \Z^2 \setminus \{ 0 \}$, \begin{align}\label{eq:sobolevEstRemainder4}
    |k|^m |\widehat{\Rc_f \varphi}(k)| \lesssim_m d_{C^m} (f^{-1}, f_A^{-1}) \| \varphi \|_{W^{m+1, 1}} \,. 
  \end{align}
  \end{itemize}
\end{lemma}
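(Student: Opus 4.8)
The plan is to reduce both parts to $W^{m,1}$-estimates on the pulled‑back functions $\varphi\circ f^{-1}$ (and $\varphi\circ f_A^{-1}$), using two elementary facts: the heat semigroup $e^{\kappa\Delta}$ is a contraction on every $W^{m,p}(\T^2)$ (it is convolution with a probability kernel, and on the Fourier side the multiplier $e^{-4\pi^2\kappa|k|^2}$ is bounded by $1$), and the change of variables $y=f^{-1}(x)$ has Jacobian identically $1$ because $f$ is volume‑preserving.

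\textbf{Part (a).} By the contraction property it suffices to bound $\|\varphi\circ f^{-1}\|_{W^{m,1}}$ and $\|\varphi\circ f^{-1}\|_{H^m}$. I would apply the Fa\`a di Bruno formula: $D^m(\varphi\circ f^{-1})$ is a finite sum of terms $\big((D^j\varphi)\circ f^{-1}\big)\prod_i D^{m_i}f^{-1}$ with $j\le m$ and $\sum_i m_i=m$, each bounded pointwise by $\|f^{-1}\|_{C^m}^{\,m}\,\big|(D^j\varphi)\circ f^{-1}\big|$. Integrating and changing variables by the volume‑preserving map $f^{-1}$ turns $\int_{\T^2}\big|(D^j\varphi)(f^{-1}(x))\big|\,dx$ into $\|D^j\varphi\|_{L^1}\le\|\varphi\|_{W^{m,1}}$; the $L^2$-version is analogous, using the mean‑zero hypothesis to dominate lower‑order derivatives of $\varphi$ by the homogeneous $H^m$ seminorm. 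Finally, the derivatives of $f^{-1}$ are polynomially controlled (via the inverse function theorem) by $\|f\|_{C^m}$ and $\|(Df)^{-1}\|_{C^0}$, and the latter is uniformly bounded as $f$ ranges over a small $C^\infty$-neighbourhood of $f_A$, so the factor $\|f^{-1}\|_{C^m}^{\,m}$ is absorbed into an $m$-dependent multiple of $\|f\|_{C^m}$.

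\textbf{Part (b).} I would first reduce the weighted Fourier coefficient bound to a Sobolev bound via the elementary inequality $|k|^m\,|\hat h(k)|\lesssim_m\sum_{|\alpha|=m}|\widehat{\partial^\alpha h}(k)|\le\sum_{|\alpha|=m}\|\partial^\alpha h\|_{L^1}\lesssim_m\|h\|_{W^{m,1}}$ (using $|k|\le|k^1|+|k^2|$ and $\widehat{\partial^\alpha h}(k)=(2\pi i k)^\alpha\hat h(k)$), so that it is enough to prove
\[\|\varphi\circ f^{-1}-\varphi\circ f_A^{-1}\|_{W^{m,1}}\lesssim_m d_{C^m}(f^{-1},f_A^{-1})\,\|\varphi\|_{W^{m+1,1}}.\]
Writing $\delta:=f^{-1}-f_A^{-1}$, a $\Z^2$-periodic map whose $C^m$-size is controlled by $d_{C^m}(f^{-1},f_A^{-1})$, and interpolating by $g_t:=f_A^{-1}+t\delta$ for $t\in[0,1]$, the fundamental theorem of calculus gives
\[\varphi\circ f^{-1}-\varphi\circ f_A^{-1}=\int_0^1(\nabla\varphi\circ g_t)\cdot\delta\;dt.\]
I would bound the $W^{m,1}$-norm of the integrand by the Leibniz estimate $\|uv\|_{W^{m,1}}\lesssim_m\|u\|_{W^{m,1}}\|v\|_{C^m}$, obtaining $\lesssim_m\|\nabla\varphi\circ g_t\|_{W^{m,1}}\,d_{C^m}(f^{-1},f_A^{-1})$; and since for $\epsilon$ small $\delta$ is $C^1$-small, each $g_t$ is a $C^1$-perturbation of the diffeomorphism $f_A^{-1}$, hence itself a diffeomorphism of $\T^2$ with $|\det Dg_t|$ uniformly bounded below, so the Fa\`a di Bruno plus change‑of‑variables argument of Part (a) — now with the Jacobian of $g_t$ bounded below rather than equal to one — gives $\|\nabla\varphi\circ g_t\|_{W^{m,1}}\lesssim_m\|\nabla\varphi\|_{W^{m,1}}\lesssim_m\|\varphi\|_{W^{m+1,1}}$ uniformly in $t$. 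Integrating in $t$ finishes the estimate.

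\textbf{Expected obstacle.} The computations are all routine; the two points that need genuine care are verifying that the interpolating maps $g_t$ really are diffeomorphisms of $\T^2$ with Jacobian bounded away from zero (so the composition/change‑of‑variables step is legitimate — this is exactly where smallness of $d_{C^1}(f^{-1},f_A^{-1})$, i.e.\ of $\epsilon$, enters), and organizing the Fa\`a di Bruno/Leibniz bookkeeping in Part (b) so that every term carries a factor of $d_{C^m}(f^{-1},f_A^{-1})$ while only a single derivative of $\varphi$ is ever exposed — which holds because the fundamental‑theorem‑of‑calculus representation pulls out exactly one gradient of $\varphi$ before any further differentiation.
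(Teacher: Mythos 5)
Your proposal is correct and follows essentially the same route as the paper: part (a) by the chain rule (Fa\`a di Bruno) plus volume preservation and contractivity of $e^{\kappa\Delta}$, and part (b) by the fundamental-theorem-of-calculus interpolation $g_t$ between $f_A^{-1}$ and $f^{-1}$, with the weighted Fourier bound obtained by integrating by parts $m$ times (your reduction $|k|^m|\hat h(k)|\lesssim_m \|h\|_{W^{m,1}}$ is exactly that step). You in fact supply more detail than the paper does, including the correct observation that one must verify the interpolating maps $g_t$ are diffeomorphisms with Jacobian bounded below before changing variables.
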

\begin{proof}
  Each estimate in (a) follows from the chain rule; further details are omitted. In (b), by the fundamental theorem of calculus we observe that 
  \begin{align}\label{FTC_step}
    \Rc_f \varphi = \int_0^t \nabla \varphi_{g_t(x)} \cdot  w(x) \,  dt \, , 
  \end{align}
  where $g_t(x) := \exp_{f_A^{-1} x} (t w(x))$ and $w(x) := \exp_{f_A^{-1}x}^{-1}(f^{-1} x)$. Here, for $y \in \T^2$ and $v \in T_y \T^2 \cong \R^2$, we write $\exp_y(v) = y + v$ mod 1 for the usual exponential map. The estimate \eqref{eq:sobolevEstRemainder4} follows on taking a Fourier transform and integrating by parts, noting that the extra derivative in $\varphi$ originates from \eqref{FTC_step}.
  
\end{proof}

\subsection*{Time-one estimates}

We now turn to the core of the proof of Proposition \ref{prop:mainVarCentEstimates4}, estimates on the spectral centroid and variance of $\Lc^n_{f, \kappa} b$ for $b \in C^\infty$. We begin with a treatment of estimates for a single iterate $\Lc_{f, \kappa}$. 

\begin{lemma}[Time-one centroid estimate]\label{lem:centroidEstTimeOne4}
  Let $\varphi \in L^2$ have mean zero, $\kappa \geq 0$. Then, the following estimates hold. 
  \begin{itemize}
    \item[(a)] (Effect of $\Lc_{f_A}$) It holds that 
    \begin{align*}
      \E X_{\Lc_{f_A} \varphi} = A^{-T} \E X_{\varphi} \,. 
    \end{align*}
    \item[(a)] (Effect of $\Lc_f$)
    \begin{align}
      | \E X_{\Lc_f \varphi} - E X_{\Lc_{f_A} \varphi} | \lesssim  \left( \frac{\epsilon \| \varphi\|_{W^{3,1}}}{\| \varphi\|} \right)^2 + \frac{\epsilon \| \varphi\|_{W^{4,1}}}{\| \varphi\|} 
    \end{align}
    
    \item[(b)] (Effect of $e^{\kappa \Delta}$) 
    \begin{align}\label{eq:controlDiffusivityCentroid4}
      | \E X_{e^{\kappa \Delta} \varphi} - \E X_\varphi|  \lesssim \kappa\left(  \frac{\| \varphi\|_{H^1}^2}{\| \varphi\|^2} \frac{\|  \varphi\|_{H^{1/2}}^2}{\| \varphi\|^2} +  \frac{\| \varphi\|^2_{H^{3/2}} }{\| \varphi\|^2}\right) 
    \end{align}
    so long as $\kappa \frac{\| \varphi\|_{H^1}^2}{\| \varphi\|^2} \leq 1/10$. 
  \end{itemize}
\end{lemma}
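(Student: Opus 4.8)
The plan is to derive all three estimates from a single elementary identity: if $\varphi_0$ and $T\varphi$ are mean-zero elements of $L^2$, then
\[
\E X_{T\varphi}-\E X_{\varphi_0}=\frac{1}{\|T\varphi\|^{2}}\sum_{k}\bigl(k-\E X_{\varphi_0}\bigr)\Bigl(|\widehat{T\varphi}(k)|^{2}-|\widehat{\varphi_0}(k)|^{2}\Bigr),
\]
which follows on multiplying by $\|T\varphi\|^{2}=\sum_k|\widehat{T\varphi}(k)|^2$ and using $\sum_k(k-\E X_{\varphi_0})|\widehat{\varphi_0}(k)|^{2}=0$. Thus everything reduces to bounding the Fourier-mass perturbation $|\widehat{T\varphi}(k)|^2-|\widehat{\varphi_0}(k)|^2$, weighted by $|k-\E X_{\varphi_0}|$, together with the normalization $\|T\varphi\|^2$. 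The estimate for $\Lc_{f_A}$ requires no perturbation: since $f_A(x)=Ax$ is a linear toral automorphism with $|\det A|=1$, one has $\widehat{\Lc_{f_A}\varphi}(k)=\hat\varphi(A^{T}k)$, so $\Lc_{f_A}$ is an $L^2$-isometry and $\PP_{\Lc_{f_A}\varphi}$ is the push-forward of $\PP_\varphi$ under $k\mapsto A^{-T}k$; linearity of expectation under a linear change of variable then gives $\E X_{\Lc_{f_A}\varphi}=A^{-T}\E X_\varphi$.

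For the $\Lc_f$ estimate I would apply the identity with $\varphi_0=\Lc_{f_A}\varphi$ and $T\varphi=\Lc_f\varphi=\varphi_0+v$, $v:=\Rc_f\varphi$ (both mean-zero, $\|\varphi_0\|=\|\varphi\|$, $\E X_{\varphi_0}=A^{-T}\E X_\varphi$), so the Fourier-mass perturbation is $2\Re(\widehat{\varphi_0}(k)\overline{\hat v(k)})+|\hat v(k)|^2$. The only external input is Lemma~\ref{lem:sobolevEst4}(b): since $d_{C^m}(f^{-1},f_A^{-1})\lesssim_m\epsilon$ when $d_{C^\infty}(f,f_A)<\epsilon$, we have $|\hat v(k)|\lesssim_m\epsilon\,\|\varphi\|_{W^{m+1,1}}|k|^{-m}$. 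Taking $m=2$ and using convergence of $\sum_{k\in\Z^2\setminus\{0\}}|k|^{-3}$ and $\sum_k|k|^{-4}$ gives $\|v\|\lesssim\epsilon\|\varphi\|_{W^{3,1}}$ and $\sum_k|k|\,|\hat v(k)|^2\lesssim\epsilon^2\|\varphi\|_{W^{3,1}}^2$; taking $m=3$ with Cauchy--Schwarz gives $\sum_k|k|\,|\widehat{\varphi_0}(k)|\,|\hat v(k)|\lesssim\epsilon\|\varphi\|_{W^{4,1}}\|\varphi\|$. The weighted cross term I would handle by Cauchy--Schwarz in $k$: $\bigl|\sum_k(k-\E X_{\varphi_0})\,2\Re(\widehat{\varphi_0}(k)\overline{\hat v(k)})\bigr|\le 2\bigl\|(k-\E X_{\varphi_0})\widehat{\varphi_0}(k)\bigr\|_{\ell^2}\|v\|$, where $\bigl\|(k-\E X_{\varphi_0})\widehat{\varphi_0}(k)\bigr\|_{\ell^2}^2=\Var(X_{\varphi_0})\|\varphi_0\|^2\le\|\varphi_0\|_{H^1}^2\lesssim\|\varphi\|_{H^1}^2$; the Sobolev embedding $W^{1,1}(\T^2)\hookrightarrow L^2(\T^2)$ converts this to a $\|\varphi\|_{W^{2,1}}$ bound, and the remaining dependence on $\E X_\varphi$ is absorbed using the a priori bound $|\E X_\varphi|\le\|\varphi\|_{H^{1/2}}^2/\|\varphi\|^2$, which together feed the numerator into the asserted right-hand side once the denominator is seen to be $\approx\|\varphi\|^2$.

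The delicate point — the step I expect to take the most care — is the normalization $\|\Lc_f\varphi\|^2=\|\varphi\|^2+d$ with $d=2\Re\langle\varphi_0,v\rangle+\|v\|^2$: a crude bound on $d$ is only first order in $\epsilon$, which is too weak, so I would exploit volume preservation. Writing $\langle\varphi_0,v\rangle=\int\overline{\varphi(y)}\bigl(\varphi(h(y))-\varphi(y)\bigr)\,dy$ with $h:=f^{-1}\circ f_A$ (volume-preserving, $C^\infty$-close to the identity) and $w:=h-\mathrm{id}$, the first-order-in-$w$ part of the integrand integrates to $-\tfrac12\int|\varphi|^2\,\nabla\!\cdot\! w$; but in two dimensions $1=\det(I+Dw)=1+\nabla\!\cdot\! w+\det(Dw)$, so $\nabla\!\cdot\! w=-\det(Dw)=O(\epsilon^2)$ pointwise. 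Bounding the Taylor remainder by $\|w\|_{L^\infty}^2\|\varphi\|_{L^\infty}\|\varphi\|_{W^{2,1}}$ and using $W^{2,1}(\T^2)\hookrightarrow L^\infty$ then yields $|d|\lesssim\epsilon^2\|\varphi\|_{W^{3,1}}^2$, hence $\|\Lc_f\varphi\|^2\gtrsim\|\varphi\|^2$ and $|d|/\|\varphi\|^2\lesssim(\epsilon\|\varphi\|_{W^{3,1}}/\|\varphi\|)^2$. Combining the numerator estimates with this denominator bound in the identity produces the asserted inequality.

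Finally, for the $e^{\kappa\Delta}$ estimate I would take $\varphi_0=\varphi$ and $\widehat{e^{\kappa\Delta}\varphi}(k)=\theta_k\hat\varphi(k)$, $\theta_k=e^{-4\pi^2\kappa|k|^2}\in(0,1]$, so the Fourier-mass perturbation is $-(1-\theta_k^2)|\hat\varphi(k)|^2$ and the identity gives
\[
\bigl|\E X_{e^{\kappa\Delta}\varphi}-\E X_\varphi\bigr|\le\frac{1}{\|e^{\kappa\Delta}\varphi\|^2}\Bigl(\sum_k|k|\,(1-\theta_k^2)|\hat\varphi(k)|^2+|\E X_\varphi|\sum_k(1-\theta_k^2)|\hat\varphi(k)|^2\Bigr).
\]
Using $0\le 1-\theta_k^2\le 8\pi^2\kappa|k|^2$, the first sum is $\lesssim\kappa\|\varphi\|_{H^{3/2}}^2$ and the second is $\lesssim\kappa\|\varphi\|_{H^1}^2$; the hypothesis $\kappa\|\varphi\|_{H^1}^2/\|\varphi\|^2\le\tfrac1{10}$ gives $\|e^{\kappa\Delta}\varphi\|^2=\|\varphi\|^2-\sum_k(1-\theta_k^2)|\hat\varphi(k)|^2\gtrsim\|\varphi\|^2$; and $|\E X_\varphi|\le\|\varphi\|_{H^{1/2}}^2/\|\varphi\|^2$ by definition. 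Substituting yields precisely \eqref{eq:controlDiffusivityCentroid4}.
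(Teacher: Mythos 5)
Your overall strategy---express the change in centroid as a weighted sum of the perturbation of Fourier mass $|\widehat{T\varphi}(k)|^2-|\widehat{\varphi_0}(k)|^2$, control the $\Lc_f$ perturbation by Lemma \ref{lem:sobolevEst4}(b) with $m=2$ and $m=3$, and control the heat semigroup by $1-e^{-8\pi^2\kappa|k|^2}\lesssim\kappa|k|^2$ together with the energy estimate for the normalization---is the same as the paper's, and parts (a) and (c) come out correctly (your two terms in (c) are exactly the paper's terms (I) and (II)). Two remarks on part (b). First, the step you single out as the most delicate is a non-issue: $\Lc_f\varphi=\varphi\circ f^{-1}$ with $f$ volume-preserving, so $\|\Lc_f\varphi\|=\|\varphi\|$ \emph{exactly} by the change of variables $y=f^{-1}(x)$ (just as for $\Lc_{f_A}$; linearity of $f_A$ is irrelevant). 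Your entire $\det(I+Dw)$ computation establishes a weaker form of something that is identically zero; the paper simply normalizes both $\E X_{\Lc_f\varphi}$ and $\E X_{\Lc_{f_A}\varphi}$ by the same $\|\varphi\|^2$ from the start.

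Second, centering the weight at $\E X_{\varphi_0}$ rather than at the origin changes the bound you actually obtain. Your Cauchy--Schwarz treatment of the cross term yields $\bigl(\Var(X_{\varphi_0})\|\varphi_0\|^2\bigr)^{1/2}\|v\|\lesssim\epsilon\,\|\varphi\|_{H^1}\|\varphi\|_{W^{3,1}}$, and the $|\E X_{\varphi_0}|\cdot\|v\|^2$ piece carries an extra factor $\|\varphi\|_{H^{1/2}}^2/\|\varphi\|^2\geq 1$; after dividing by $\|\varphi\|^2$ neither of these is dominated by the stated right-hand side $(\epsilon\|\varphi\|_{W^{3,1}}/\|\varphi\|)^2+\epsilon\|\varphi\|_{W^{4,1}}/\|\varphi\|$ in general (there is no inequality $\|\varphi\|_{H^1}\|\varphi\|_{W^{3,1}}\lesssim\|\varphi\|_{W^{4,1}}\|\varphi\|$). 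The paper instead weights by $k$ itself and estimates the cross term as $\sum_k|k|\,|\widehat{\Lc_{f_A}\varphi}(k)|\,|\widehat{\Rc_f\varphi}(k)|\lesssim\epsilon\|\varphi\|_{W^{4,1}}\sum_k|k|^{-2}|\widehat{\Lc_{f_A}\varphi}(k)|\lesssim\epsilon\|\varphi\|_{W^{4,1}}\|\varphi\|$ by Cauchy--Schwarz against $(\sum_k|k|^{-4})^{1/2}$. Your variant is harmless for the application (all such quotients are absorbed into $\Ec[\varphi]$ and then into $M^n$), but it does not literally prove the inequality as stated; replacing the weight $k-\E X_{\varphi_0}$ by $k$ fixes this and shortens the argument.
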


\begin{proof}
For (a) we have $\widehat{\Lc_{f_A} \varphi}(k) = \varphi(A^T k)$, hence
\begin{align*}
  \E X_{\Lc_{f_A} \varphi} & = \frac{1}{\| \Lc_{f_A} \varphi\|^2} \sum_{k} k | \hat \varphi(A^T k)|^2 \\ 
  & = \frac{1}{\| \varphi\|^2} \sum_{k} A^{-T} k | \hat \varphi(k)|^2  = A^{-T} \E X_{\varphi} 
\end{align*}
having used the fact that $\| \Lc_{f_A} \varphi\| = \| \varphi\|$.

For part (b), we have 
  \begin{align}
    \E X_{\Lc_f \varphi} & = \frac{1}{\| \varphi\|^2} \sum_{k \in \Z^2 \setminus \{ 0 \}}  k |\widehat{\Lc_f \varphi}(k)|^2 \\ 
    & = \frac{1}{\| \varphi\|^2}\sum_{k \in \Z^2 \setminus \{ 0 \}} \left[  k |\widehat{\Lc_{f_A} \varphi}(k)|^2 + O\big(|k| | \widehat{\Rc_f \varphi}(k)|^2 + |k| |\widehat{\Lc_{f_A} \varphi(k)}| |\widehat{\Rc_f \varphi(k)}| \big)\right]
  \end{align}
  The first term inside the $[\cdots]$ sums precisely to $\E X_{\Lc_{f_A}}$. 
  For the remaining terms, the Sobolev estimates for $\Rc_f \varphi$ from Lemma \ref{lem:sobolevEst4}(b) give
  \begin{align}
     \| \varphi \|^2 |\E X_{\Lc_f \varphi} - \E X_{\Lc_{f_A} \varphi}| & \lesssim  \sum_k |k| \cdot \frac{\epsilon^2 \| \varphi\|_{W^{3,1}}^2}{|k|^4} + |k| |\widehat{\Lc_{f_A} \varphi}(k)| \frac{\epsilon \| \varphi\|_{W^{4,1}}}{|k|^3} \\ 
     & \lesssim \epsilon^2 \| \varphi\|_{W^{3,1}}^2 + \epsilon \| \varphi\|_{W^{4,1}} \sum_k |k|^{-2} |\widehat{\Lc_{f_A} \varphi}(k)| \\ 
     & \leq \epsilon^2 \| \varphi\|_{W^{3,1}}^2 + \epsilon \| \varphi\|_{W^{4,1}} \left(\sum_k |k|^{-4}\right)^{1/2} \left( \sum_k |\widehat{\Lc_{f_A} \varphi}(k)|^2 \right)^{1/2} \\ 
     & \lesssim \epsilon^2 \| \varphi\|_{W^{3,1}}^2 + \epsilon \| \varphi\|_{W^{4,1}} \| \varphi\|
  \end{align} 

For part (c), we compute 
  \begin{align*}
    \E X_{e^{\kappa \Delta} \varphi} - \E X_\varphi =  \sum_{k \in \Z^2 \setminus \{ 0 \}} & \bigg[ \underbrace{\left( \frac{1}{\| e^{\kappa \Delta} \varphi \|^2 } - \frac{1}{\| \varphi\|^2} \right) k e^{- 2 \kappa |k|^2} | \hat \varphi(k)|^2}_{(I)} \\ 
    & + \underbrace{\frac{1}{\| \varphi\|^2} k (e^{- 2 \kappa |k|^2} - 1) | \hat \varphi(k)|^2}_{(II)} \bigg] \,. 
  \end{align*}
  For each $k \in \Z^2 \setminus \{ 0 \}$, term (I) bounded in absolute value from above by 
  \begin{align}
     {\| \varphi\|^2 - \| e^{\kappa \Delta} \varphi \|^2 \over \| \varphi\|^2 \| e^{\kappa \Delta} \varphi \|^2} \cdot |k| |\hat \varphi(k)|^2 \,. 
  \end{align}
  Following energy estimate in first lines of the proof of Lemma \ref{lem:enhancedDissipLower2}, we get 
  \begin{align}\label{eq:energyEst4}0 \leq \| \varphi \|^2 - \| e^{\kappa \Delta} \varphi\|^2 \leq \kappa \| \varphi\|_{H^1}^2.\end{align}
  Therefore, when $\kappa \frac{\| \varphi\|_{H^1}^2}{\| \varphi\|^2} \leq \frac{1}{10}$, (I) is bounded by 
  \begin{align}
    \lesssim \frac{1}{\| \varphi \|^2} \cdot \frac{\kappa \| \varphi\|^2_{H^1}}{\| \varphi \|^2} \cdot |k| |\hat \varphi(k)|^2 \, . 
  \end{align}
  Summing over $k$ gives the first term on the RHS of \eqref{eq:controlDiffusivityCentroid4}. 

  Term (II) is controlled 
  \begin{align}
    \leq \frac{\kappa}{\| \varphi\|^2} \cdot \left| \frac{e^{-2 \kappa |k|^2} - 1}{\kappa |k|^2}\right| |k|^3 |\hat \varphi(k)|^2 \,, 
  \end{align}
  giving the second term on the RHS of \eqref{eq:controlDiffusivityCentroid4}. 
\end{proof}

\begin{lemma}[Time-one variance estimate]\label{lem:varianceTimeOne4}
  Let $\varphi \in L^2$ have mean zero, $\kappa \geq 0$. Then, the following estimates hold. 
  \begin{itemize}
    \item[(a)] (Effect of $\Lc_{f_A}$) It holds that 
    \[\Var(X_{\Lc_{f_A} \varphi}) \leq |A|^2 \Var(X_{\varphi}) \,.  \]
    \item[(b)] (Effect of $\Lc_f$)
    \begin{align}
      \Var(X_{\Lc_f \varphi})  \leq & \Var(X_{\Lc_{f_A} \varphi}) + O \bigg[ \epsilon^2 \frac{\| \varphi\|_{W^{4,1}}^2}{\| \varphi\|^2} + \epsilon \frac{\| \varphi\|_{W^{5,1}}}{\| \varphi\|} \\ 
      & + \epsilon \left( 1 +  \left(\frac{\| \varphi\|_{H^{1/2}}}{\| \varphi\|}\right)^4 \right) \cdot \left( 1 + \epsilon \frac{\| \varphi\|_{W^{3,1}}^2}{\| \varphi\|^2} \right)  \bigg] \,. 
    \end{align}
    \item[(c)] (Effect of $e^{\kappa \Delta}$) 
  \begin{align} \label{eq:timeOneVarLine1}
    \Var(X_{e^{\kappa \Delta} \varphi}) & \leq \left[ 1 + O \left( \kappa {\| \varphi\|_{H^1}^2 \over \| \varphi\|^2} \right) \right] \Var(X_\varphi) \\ 
    & \leq \Var(X_\varphi) + O \left(\kappa {\| \varphi\|_{H^1}^4 \over \| \varphi\|^4} \right) \label{eq:timeOneVarLine2}
  \end{align}
  provided $\kappa \frac{\| \varphi\|^2_{H^1}}{\| \varphi\|^2} \leq 1/10$.
  \end{itemize}
\end{lemma}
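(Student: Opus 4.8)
\emph{Strategy.} I would prove the three estimates separately, one operator at a time, the engine in each case being the variational characterization of spectral variance (Lemma~\ref{lem:basicSpectralProps4}(i)): for any mean-zero $\psi\in L^2$ and any $z\in\R^2$, $\Var(X_\psi)\le\E|X_\psi-z|^2$, with equality at $z=\E X_\psi$. For each operator I feed into this bound the ``comparison center'' dictated by the corresponding centroid computation in Lemma~\ref{lem:centroidEstTimeOne4}, and then estimate the resulting weighted Fourier sum against that center; the leftover factor is then precisely the damping/perturbation being analyzed. Throughout I would use the elementary identities $\sum_k|k|^2|\hat\varphi(k)|^2=\|\varphi\|_{H^1}^2$ and $\sum_k|k|\,|\hat\varphi(k)|^2=\|\varphi\|_{H^{1/2}}^2$, giving $\E|X_\varphi|^2=\|\varphi\|_{H^1}^2/\|\varphi\|^2$ and $\E|X_\varphi|=\|\varphi\|_{H^{1/2}}^2/\|\varphi\|^2$, hence $\Var(X_\varphi)\le\|\varphi\|_{H^1}^2/\|\varphi\|^2$ and $|\E X_\varphi|\le\|\varphi\|_{H^{1/2}}^2/\|\varphi\|^2$; I also use that composition with a volume-preserving map is an $L^2$-isometry, so $\|\Lc_{f_A}\varphi\|=\|\Lc_f\varphi\|=\|\varphi\|$.

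\emph{Parts (a) and (c).} For (a): since $\widehat{\Lc_{f_A}\varphi}(k)=\hat\varphi(A^T k)$, apply the variational bound with $z=\E X_{\Lc_{f_A}\varphi}=A^{-T}\E X_\varphi$ (the $\Lc_{f_A}$-part of Lemma~\ref{lem:centroidEstTimeOne4}) and substitute $j=A^T k$; this rewrites $\|\varphi\|^2\Var(X_{\Lc_{f_A}\varphi})$ as $\sum_j|A^{-T}(j-\E X_\varphi)|^2|\hat\varphi(j)|^2$, and the operator-norm bound $|A^{-T}(j-\E X_\varphi)|\le|A^{-T}|\,|j-\E X_\varphi|$ together with $|A^{-T}|=|A|$ (valid since $\det A=\pm1$) finishes it. For (c): take $z=\E X_\varphi$, so $\Var(X_{e^{\kappa\Delta}\varphi})\le\|e^{\kappa\Delta}\varphi\|^{-2}\sum_k|k-\E X_\varphi|^2 e^{-2\kappa|k|^2}|\hat\varphi(k)|^2$; in the numerator discard the factor $e^{-2\kappa|k|^2}\le1$ to obtain $\|\varphi\|^2\Var(X_\varphi)$, and in the denominator invoke the energy estimate~\eqref{eq:energyEst4}, $\|e^{\kappa\Delta}\varphi\|^2\ge\|\varphi\|^2-\kappa\|\varphi\|_{H^1}^2$. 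Combining these with the standing hypothesis $\kappa\|\varphi\|_{H^1}^2/\|\varphi\|^2\le1/10$ and a first-order expansion of $x\mapsto(1-x)^{-1}$ yields~\eqref{eq:timeOneVarLine1}, and then the bound $\Var(X_\varphi)\le\|\varphi\|_{H^1}^2/\|\varphi\|^2$ upgrades this to~\eqref{eq:timeOneVarLine2}.

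\emph{Part (b), the main obstacle.} This mirrors, and is a bit more involved than, the proof of the $\Lc_f$-part of Lemma~\ref{lem:centroidEstTimeOne4}. Write $\widehat{\Lc_f\varphi}(k)=a_k+r_k$ with $a_k:=\widehat{\Lc_{f_A}\varphi}(k)$ and $r_k:=\widehat{\Rc_f\varphi}(k)$, and apply the variational bound with $z=\mu_A:=\E X_{\Lc_{f_A}\varphi}=A^{-T}\E X_\varphi$. Since $\|\Lc_f\varphi\|=\|\Lc_{f_A}\varphi\|=\|\varphi\|$, expanding $|a_k+r_k|^2=|a_k|^2+2\operatorname{Re}(\bar a_k r_k)+|r_k|^2$ makes the $|a_k|^2$-contribution equal to $\Var(X_{\Lc_{f_A}\varphi})$ \emph{exactly}, so only the cross and remainder terms $\|\varphi\|^{-2}\sum_k|k-\mu_A|^2\big(|a_k|\,|r_k|+|r_k|^2\big)$ remain. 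I would control these by splitting $|k-\mu_A|^2\lesssim|k|^2+|\mu_A|^2$, using $|\mu_A|\le|A^{-T}|\,\|\varphi\|_{H^{1/2}}^2/\|\varphi\|^2$ for the second piece, applying the Sobolev decay bounds $|k|^m|r_k|\lesssim_m d_{C^m}(f^{-1},f_A^{-1})\,\|\varphi\|_{W^{m+1,1}}\lesssim_m\epsilon\,\|\varphi\|_{W^{m+1,1}}$ of Lemma~\ref{lem:sobolevEst4}(b), choosing in each sub-sum the order $m$ large enough that any leftover series $\sum_k|k|^{-4}$ converges, and pairing the $r$-factor against the $a$-factor by Cauchy--Schwarz with $\sum_k|a_k|^2=\|\varphi\|^2$. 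The only real work is bookkeeping: tracking which Sobolev order ($W^{3,1}$ up through $W^{5,1}$) is needed in each term so the weights $|k|^2$ and $|\mu_A|^2$ get absorbed without a divergent sum, and then repackaging the resulting pieces — some linear in $\epsilon$, some quadratic, those coming from $|\mu_A|^2$ carrying the extra factor $(\|\varphi\|_{H^{1/2}}/\|\varphi\|)^4$ — into the stated form. No new idea beyond those used for the centroid lemma is required.
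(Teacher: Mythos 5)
Your proposal matches the paper's proof essentially step for step: part (a) via the substitution $j=A^Tk$ and the operator-norm bound $|A^{-T}|=|A|$, part (b) via the variational bound of Lemma~\ref{lem:basicSpectralProps4}(i) centered at $\E X_{\Lc_{f_A}\varphi}$ combined with the expansion $|\widehat{\Lc_{f_A}\varphi}(k)+\widehat{\Rc_f\varphi}(k)|^2$ and the Sobolev decay of Lemma~\ref{lem:sobolevEst4}(b) together with the centroid bound $|\E X_\varphi|\lesssim\|\varphi\|_{H^{1/2}}^2/\|\varphi\|^2$, and part (c) via discarding $e^{-2\kappa|k|^2}\le 1$ and the energy estimate~\eqref{eq:energyEst4} plus $\Var(X_\varphi)\lesssim\|\varphi\|_{H^1}^2/\|\varphi\|^2$. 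The argument is correct and takes the same approach as the paper.
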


\begin{proof}
For (a), we compute 
\begin{align*}
   \Var(X_{\Lc_{f_A} \varphi}) & = \frac{1}{\| \Lc_{f_A} \varphi \|^2} \sum_k |k - \E(X_{\Lc_{f_A} \varphi}) |^2 |\hat \varphi(A^T k)|^2 \\ 
  & = \frac{1}{\| \varphi \|^2} \sum_k \left|A^{-T} \big( k -  \E(X_{\varphi}) \big) \right|^2 |\hat \varphi(k)|^2 \\ 
  & \leq |A^{-T}|^2 \Var(X_\varphi) = |A|^2 \Var(X_{\varphi})
\end{align*}
noting $|A^{-T}| = |A^T| = |A|$ since $\det A = 1$ and $A$ is a  $2 \times 2 $ matrix. 

  For part (b): using Lemma \ref{lem:basicSpectralProps4}(i) we estimate

  \begin{align}
    \| \Lc_f \varphi\|^2 \Var(X_{\Lc_f \varphi}) \leq \sum_k |k - \E X_{\Lc_{f_A} \varphi}|^2   \bigg( |\widehat{\Lc_{f_A \varphi}}(k)|^2  + |\widehat{\Rc_f \varphi}(k)|^2 + 2 |\widehat{\Rc_f \varphi}(k)||\widehat{\Lc_{f_A} \varphi}(k)| \bigg) \,. 
  \end{align}
  The first parenthetical term on the RHS sums to give $ \| \varphi \|^2 \Var(X_{\Lc_{f_A} \varphi})$. The remaining terms are estimated similarly to the spectral centroid terms in Lemma \ref{lem:centroidEstTimeOne4}. We use liberally here the easily-checked, general estimate 
  \begin{align}\label{eq:centroidEst4}
    | \E(X_\varphi) | \lesssim \frac{\| \varphi\|_{H^{1/2}}^2 }{\| \varphi \|^2} 
  \end{align}
  for $\varphi \in H^{1/2}$ of mean zero. 

  For (c) we similarly have
  \begin{align}
    \Var(X_{e^{\kappa \Delta} \varphi}) & \leq \frac{1}{\| e^{\kappa \Delta} \varphi\|^2} \sum_k |k - \E(X_{\varphi})|^2 e^{- 2 \kappa |k|^2} |\hat \varphi(k)|^2 \\ 
    & \leq \frac{\| \varphi\|^2}{\| e^{\kappa \Delta} \varphi\|^2} \Var(X_{\varphi}) \, . 
  \end{align}
  Using \eqref{eq:energyEst4} as in the proof of Lemma \ref{lem:centroidEstTimeOne4} completes the proof of \eqref{eq:timeOneVarLine1}. The bound in \eqref{eq:timeOneVarLine2} follows from \eqref{eq:centroidEst4} and the additional estimate 
  \begin{align} \label{eq:varEstimate4} \Var(X_\varphi) \lesssim \frac{\| \varphi\|_{H^{1}}^2 }{\| \varphi \|^2} \, , \end{align}
  for mean-zero $\varphi \in H^1$. 
\end{proof}

\subsection*{Iterated estimates}

{\bf Notation. } We conveniently summarize the results of the time-one estimates of Lemmas \ref{lem:centroidEstTimeOne4}, \ref{lem:varianceTimeOne4} as follows. 
With $\epsilon, \kappa$ as above, let us write $\eta = \max\{ \epsilon, \kappa\}$. Given $\varphi \in C^\infty$ of mean zero, the estimates of Lemmas \ref{lem:centroidEstTimeOne4}, \ref{lem:varianceTimeOne4} indicate that 
\begin{gather*} | \E X_{\Lcfk \varphi} - \E X_{\varphi}|  \lesssim \eta \mathcal{E}[\varphi] \, , \\ 
  \Var(X_{\Lcfk \varphi})  \leq |A|^2 \Var(X_\varphi) + O(\eta \mathcal{E}[\varphi]) \, 
\end{gather*}
where
\begin{gather*}
  \mathcal{E}[\varphi] = \max \bigg\{ \bigg(\frac{\| \varphi\|_{W^{3,1}}}{\| \varphi\|}\bigg)^2 , \frac{\| \varphi\|_{W^{4,1}}}{\| \varphi\|} , \left(\frac{\| \varphi\|_{H^1}}{\| \varphi\|} \frac{\| \varphi\|_{H^{1/2}}}{\| \varphi\|}\right)^2, \left(\frac{\|\varphi\|_{H^{3/2}}}{\| \varphi\|}\right)^2 , \\ 
  \bigg(\frac{\| \varphi\|_{W^{4,1}}}{\| \varphi\|}\bigg)^2 , \frac{\| \varphi\|_{W^{5,1}}}{\| \varphi\|} , \left(\frac{\| \varphi\|_{H^1}}{\| \varphi\|} \right)^2 , \left(\frac{\| \varphi\|_{H^1}}{\| \varphi\|} \right)^4 \\ 
  \left( 1 +  \left(\frac{\| \varphi\|_{H^{1/2}}}{\| \varphi\|}\right)^4 \right) \cdot \left( 1 + \frac{\| \varphi\|_{W^{3,1}}^2}{\| \varphi\|^2} \right) 
  \bigg\} \, , 
\end{gather*}
provided that $\eta \mathcal{E}[\varphi] \leq 1/10$. 

Let $\varphi_0 := b$ and, for $n \geq 1$, let $\varphi_n = \Lc^n_{f, \kappa} b = \Lcfk \varphi_{n-1}$. Let 
\[X_n = X_{\varphi_n} \, , \qquad \Ec_n = \Ec[\varphi_n] \, . \]
Finally, with $b = e_{k_0}, k_0 \in \Z^2$ fixed, define $k_n = (A^{-T}) k_0$. 
To complete the proof of Proposition \ref{prop:mainVarCentEstimates4}, it suffices to prove the following. 

\begin{lemma}\label{lem:estErrorVarianceCentroid4}
  Let $\kappa > 0$. There exists $M > 1$, depending only on $\| f \|_{C^5}$, with the property that 
\begin{align*}
  \Ec_n \lesssim M^n
\end{align*}
for all $n \lesssim |\log \kappa| + 1$. 
\end{lemma}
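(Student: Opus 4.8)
The plan is to observe that $\Ec[\varphi]$ is the maximum of a fixed finite list of ratios of integer powers of the seminorms $\|\varphi\|_{W^{m,1}}$ and $\|\varphi\|_{H^m}$ for $1\le m\le 5$, together with $\|\varphi\|_{H^{1/2}}$ and $\|\varphi\|_{H^{3/2}}$, each divided by a matching power of $\|\varphi\|=\|\varphi\|_{L^2}$; the largest total degree of a numerator occurring there is $6$ (attained by the final product term). Accordingly it suffices to (i) bound each of these numerators of $\varphi_n$ from above by a geometric sequence $M_0^n$ with $M_0$ depending only on $\|f\|_{C^5}$, and (ii) bound $\|\varphi_n\|$ from below by a positive absolute constant for $n$ in the stated range. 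Together these give $\Ec_n\lesssim_{k_0} M_0^{6n}$, and any $M$ exceeding a fixed power of $M_0$ then works.

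For (i): $e^{\kappa\Delta}$ is a contraction on each $W^{m,1}$ and each $H^m$ (it is convolution against a probability density and commutes with derivatives), so iterating the bounds $\|\Lcfk\psi\|_{W^{m,1}}\lesssim_m\|f\|_{C^m}\|\psi\|_{W^{m,1}}$ and $\|\Lcfk\psi\|_{H^m}\lesssim_m\|f\|_{C^m}\|\psi\|_{H^m}$ of Lemma~\ref{lem:sobolevEst4}(a), starting from $\varphi_0=e_{k_0}$, gives, uniformly in $\kappa\ge 0$,
\[\|\varphi_n\|_{W^{m,1}}+\|\varphi_n\|_{H^m}\ \lesssim_{m,k_0}\ M_0^{\,n},\qquad 1\le m\le 5,\]
where $M_0$ depends only on $\|f\|_{C^5}$. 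Since each $\varphi_n$ is mean-zero, its Fourier support avoids the origin, so $\|\varphi_n\|_{H^{1/2}}\le\|\varphi_n\|_{H^1}$ and $\|\varphi_n\|_{H^{3/2}}\le\|\varphi_n\|_{H^2}$, and these half-integer norms inherit the same bound.

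For (ii): always $\|\varphi_n\|\le\|\varphi_0\|=1$, since $\Lc_f$ is an $L^2$-isometry (by volume preservation) and $e^{\kappa\Delta}$ an $L^2$-contraction. For the lower bound, the energy estimate from the proof of Lemma~\ref{lem:enhancedDissipLower2} (cf.\ \eqref{eq:energyEst4}) yields
\[\|\varphi_n\|^2\ \ge\ \|\varphi_0\|^2-\kappa\sum_{j=0}^{n-1}\|\Lc_f\varphi_j\|_{H^1}^2\ \ge\ 1-C\kappa\sum_{j=0}^{n-1}M_0^{\,2j}\ \ge\ 1-C'\kappa\,M_0^{\,2n},\]
using (i) with $m=1$. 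Choosing the implicit constant in the hypothesis ``$n\lesssim_M|\log\kappa|+1$'' small enough that $M_0^{\,2n}\le\kappa^{-1/2}$, the right-hand side is $\ge 1-C'\kappa^{1/2}\ge\tfrac12$ once $\kappa$ is small. Hence $\|\varphi_n\|\ge 2^{-1/2}$ throughout the range, and combining with (i), every ratio appearing in $\Ec[\varphi_n]$ is $\lesssim_{k_0}M_0^{6n}$; taking $M$ to be any fixed number larger than, say, $2M_0^{6}$ absorbs the multiplicative constant and gives $\Ec_n\lesssim M^n$, as claimed.

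The one genuinely delicate point is the $L^2$ lower bound in step (ii): it encodes exactly the fact that diffusion has not yet dissipated a definite fraction of the mass of $\varphi_0$, which is what forces the restriction $n\lesssim|\log\kappa|$, and one must check that the rate $M_0$ in the growth estimate dominates the $H^1$-growth rate governing that cutoff so that the range of validity is self-consistent. Everything else is a routine iteration of the chain-rule bounds of Lemma~\ref{lem:sobolevEst4}.
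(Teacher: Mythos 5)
Your proposal is correct and follows essentially the same route as the paper: iterate the chain-rule bounds of Lemma~\ref{lem:sobolevEst4}(a) to get geometric growth of the numerator seminorms, and use the energy estimate \eqref{eq:energyEst4} to keep $\|\varphi_n\|^2\geq 1/2$ for $n\lesssim_M|\log\kappa|+1$. If anything, your explicit bookkeeping of the maximal total degree (six) in $\Ec[\varphi]$ and the resulting choice $M\gtrsim M_0^6$ is slightly more careful than the paper's, which writes $\|\varphi_n\|_*\leq C_0M^{n/2}$ and absorbs the higher-degree terms by enlarging $M$ implicitly.
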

We note that $\| f \|_{C^5}$ is controlled by $d_{C^\infty}(f, f_A)\leq \epsilon$, so as long as $\epsilon \leq 1$ we have that the value $M$ is uniform over all $f$ we consider here, and depends only on the CAT map $f_A$. 
\begin{proof}

  To start, using Lemma \ref{lem:sobolevEst4}(a) we can fix $C_0, M > 1$, depending only on $\| f\|_{C^5}$, for which 
  \begin{align}\label{eq:higherRegGrowth4}
    \| \varphi_n\|_{\ast} \leq C_0 M^{n/2}
  \end{align} 
  for all $n$, and for each of $\ast = H^2, W^{5,1}$. 
  
  It remains to control the denominator terms involving $\| \varphi\|$ from below. For this, Equation \eqref{eq:energyEst4} implies 
  \begin{align*}
    \| \varphi_n\|^2 &\geq \| \varphi_{n-1}\|^2 - \kappa \| \varphi_{n-1}\|^2_{H^1} \geq \cdots \\ 
    & \geq 1 - \kappa \sum_0^{n-1} \| \varphi_\ell\|_{H^1}^2
  \end{align*}
  noting that $\| \varphi_0\|^2 = \| e_{k_0}\|^2 = 1$. Plugging in \eqref{eq:higherRegGrowth4}, we see that $\kappa \sum_0^{n-1} \| \varphi_\ell\|_{H^1}^2 < 1/2$ when $n \lesssim_M |\log \kappa| + 1$. 
  Using the general inequality $\frac{1}{1-a} \leq 1 + 2 a$ for $a \in [0,1/2]$ completes the proof. 
\end{proof}

Turning finally to the iterated estimates: for the time-$n$ centroid $\E X_n$ we have
\begin{align*}
  | \E X_n - k_n | & \leq \sum_{\ell = 0}^{n-1} \big| (A^{-T})^{n - 1 - \ell} \E X_{\ell + 1} - (A^{-T})^{n-\ell} \E X_{\ell} \big| \\ 
  & \leq \sum_{\ell = 0}^{n-1} |A|^{n-1-\ell} \big|  \E X_{\ell + 1} - A^{-T} \E X_{\ell} \big| \\ 
  & \lesssim \sum_{\ell = 0}^{n-1} |A|^{n-1-\ell} \cdot \eta \Ec_\ell \lesssim \eta |A|^{n-1} M^{n-1} \,, 
\end{align*}
using Lemma \ref{lem:estErrorVarianceCentroid4} in the last line. For the iterated variance, 
\begin{align*}
  \Var(X_n) & \leq |A|^2 \Var(X_{n-1}) + \eta \Ec_{n-1} \\ 
  & \leq (|A|^2)^2 \Var(X_{n-2}) + |A|^2 \eta \Ec_{n-2} + \eta \Ec_{n-1} \leq \cdots \\ 
  & \leq (|A|^n)^2 \Var(X_0) + \eta \sum_{\ell = 1}^{n-1} (|A|^2)^{\ell-1} \Ec_{n-\ell} \\ 
  & = \eta \sum_{\ell = 1}^{n-1} (|A|^2)^ {\ell-1} \Ec_{n-\ell} \\
  & \lesssim \eta |A|^{2n} M^n \,. 
\end{align*}
where above we use that $\Var(X_0) = \Var(X_{e_{k_0}}) = 0$, since $e_{k_0}$ is a pure Fourier mode with zero variance.
Enlarging $M$ completes the proof.


\begin{remark}[Origin of the critical wavenumber $R_{\rm crit}$] \label{rmk:criticalTimescale4}
  We give an account here of the origins of the critical wavenumber $R_{\rm crit}$ and corresponding shell number $\ell_{\rm crit}$ appearing in Theorems \ref{thm:pulseLocalization4}, \ref{thm:expShellLaw4} respectively. 

To start, Theorem \ref{thm:cumLaw2}(a) exhibits the statistically stationary scalar $g_{f, \kappa}$ as a superposition of time-$n$ `pulses' $\varphi_n = \Lcfk^n b$. In the proof of Theorem \ref{thm:pulseLocalization4}, it was demonstrated that $\Var(X_{\varphi_n})$ is small for times $n \leq N_{\rm crit}$, a critical timescale past which estimates break down and $\Var(X_{\varphi_n})$ may be large. When $\kappa \ll \epsilon$, the effects of diffusivity are outweighed by the effects of the perturbation $f$ of $f_A$, and the critical timescale $N_{\rm crit} \approx |\log \epsilon|$ is, roughly, the time horizon past which individual trajectories of $f^n (x)$ significantly deviate from those of $f^n_A(x)$. 

As we saw, the critical wavenumber $R_{\rm crit}$ was set so that $\log R_{\rm crit} \approx N_{\rm crit}$, and chosen specifically so that the bulk of Fourier mass from the `tamed' pulses $\varphi_n, n \leq N_{\rm crit}$ lies below $R_{\rm crit}$. Finally, critical exponential shell $\ell_{\rm crit}$ chosen so that $L^{\ell_{\rm crit}} \approx R_{\rm crit}$, which allowed us to estimate Fourier mass in the shells $[L^\ell, L^{\ell + 1}], \ell < \ell_{\rm crit}$ from only `tame' pulses. 
\end{remark}

\appendix




\section{Proof of the uniform Lasota-Yorke estimate (Proposition \ref{prop:uniformLasotaYorke3})}\label{sec:APP} \label{Jezequel}

The setup is as follows. We let $A$ be a $2 \times 2$ hyperbolic matrix with determinant $\pm 1$ and integer entries, with $f_A : \T^2 \to \T^2$ the associated CAT map as in Section \ref{sec:intro}. The cones $\Cc^u, \Cc^s$ and the anisotropic norm $\| \cdot \|_{\Hc^p}, p > 0$ are as defined in Section \ref{subsec:catMap3}. 
Our aim is to prove Proposition \ref{prop:uniformLasotaYorke3}, reproduced here for convenience and in a slightly more general form (c.f. Remark \ref{rmk:unifLYcomments3}). Below, we abuse notation somewhat and write $\| f\|_{C^\infty} = d_{C^\infty}(f, \Id)$. 

\begin{proposition}[Uniform Lasota-Yorke inequality]
    Let $p > 0, L > p, \nu \in (1, \lambda)$ and $\hat C > 0$. Then, there exist $\epsilon, C_2, C_3, M > 0$ such that the following holds. 
    Assume $\kappa \geq 0$ and that $f$ is a volume-preserving diffeomorphism of $\T^d$ such that $d_{C^1}(f, f_A) < \epsilon$ and $\| f \|_{C^\infty} \leq \hat C$. Then, for all $n \geq 1$ and $\varphi \in \Hc^p_0$, it holds that 
    \begin{align}\label{eq:LYapp}\| \Lc_{f, \kappa}^n \varphi \|_{\Hc^p} \leq C_2 \nu^{-np} \| \varphi \|_{\Hc^p} + C_3 M^n \| \varphi\|_{H^{-L}} \,.  \end{align}
\end{proposition}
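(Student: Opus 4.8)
The plan is to prove \eqref{eq:LYapp} directly on the Fourier side, by analysing how $\Lc_{f,\kappa}^{n}$ transports Fourier mass: to leading order it carries the coefficient at a wavenumber $m$ to the wavenumber $A^{-T}m$ (note $A^{-T}\in GL_2(\Z)$), so the estimate becomes a statement about orbits of $A^{-T}$ relative to the cone $\Cc^{u}$. First I would remove $\kappa$ from the picture. Since $\widehat{e^{\kappa\Delta}h}(k)=e^{-4\pi^{2}\kappa|k|^{2}}\widehat h(k)$ is a Fourier multiplier whose radial symbol lies in $(0,1]$, the operator $e^{\kappa\Delta}$ is a contraction with respect to every norm built monotonically out of the moduli $|\widehat h(k)|$ and fixed positive weights, in particular $\|\cdot\|_{\Hc^{p}}$ and $\|\cdot\|_{H^{-L}}$; writing $\Lc_{f,\kappa}^{n}$ as the alternating product of $n$ copies of $e^{\kappa\Delta}$ and $n$ copies of $\Lc_{f}$, every intermediate heat factor can only decrease Fourier coefficients, so it suffices to prove the inequality for $\Lc_{f}$ (that is, $\kappa=0$), with the resulting constants then automatically valid uniformly over $\kappa\ge 0$. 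Next, writing $f^{-1}(x)=A^{-1}x+\psi(x)$ with $\psi:=f^{-1}-A^{-1}$ small in $C^{m}$ for $m$ large (which follows from $d_{C^{1}}(f,f_{A})<\epsilon$ together with $\|f\|_{C^{\infty}}\le\hat C$), a direct computation gives
\[\widehat{\Lc_{f}\varphi}(k)=\sum_{m}K_{f}(k,m)\,\widehat\varphi(m),\qquad K_{f}(k,m)=\widehat{g_{m}}(k-A^{-T}m),\quad g_{m}(x):=e^{2\pi i\,m\cdot\psi(x)}.\]
Because the phase $m\cdot\psi$ has $C^{N}$-norm $\lesssim_{N}|m|\,\|\psi\|_{C^{N}}$, repeated integration by parts yields $|K_{f}(k,m)|\lesssim_{N}(1+|m|\epsilon)^{N}(1+|k-A^{-T}m|)^{-N}$ for every $N$, so $K_{f}(k,m)$ is essentially supported in the ball of radius $\sim 1+|m|\epsilon$ about $A^{-T}m$, while $\sum_{k}|K_{f}(k,m)|^{2}=1$ ($\Lc_f$ being an $L^2$ isometry). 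Iterating, $|\widehat{\Lc_{f}^{n}\varphi}(k)|$ is dominated by a sum over Fourier-space paths $m=m_{0}\to m_{1}\to\cdots\to m_{n}=k$ of $\prod_{j}|K_{f}(m_{j-1},m_{j})|$, along which each $m_{j}$ stays in a neighbourhood (width $\sim\epsilon|m_{j-1}|$, with rapidly decaying tails) of $A^{-T}m_{j-1}$.

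The core is a cone-dynamics estimate for the unperturbed orbit $(A^{-T})^{j}m$, carried out separately on the two pieces of $\|\cdot\|_{\Hc^{p}}$. Fix $\nu\in(1,\lambda)$. Using that $A^{-T}$ contracts $\Cc^{u}=\{|a^{s}|\le|a^{u}|\}$ and that $\Cc^{s}=(\Cc^{u})^{c}$ lies strictly inside the expanding cone of $A^{-T}$, one checks: (a) if $m\in\Cc^{s}$ then $(A^{-T})^{n}m\in\Cc^{s}$ and $|(A^{-T})^{n}m|\ge c_{0}\lambda^{n}|m|$ for all $n$ (a convergent-product argument), so the weight $|k|^{-2p}$ on $\Cc^{s}$ gains a factor $\lesssim\lambda^{-2np}\le\nu^{-2np}$; (b) if $m\in\Cc^{u}$, then since $\Cc^{s}$ is forward invariant, either the orbit stays in $\Cc^{u}$ through time $n$, which forces $m$ to lie within angle $\lesssim\lambda^{-2n}$ of $v^{u}_{T}$ and hence $|(A^{-T})^{n}m|\le C\lambda^{-n}|m|$ so the weight $|k|^{2p}$ again gains $\lesssim\lambda^{-2np}\le\nu^{-2np}$; or it escapes into $\Cc^{s}$ at some time $j^{\ast}\le n$, in which case combining the contraction up to $j^{\ast}$ with the $\lambda$-expansion afterwards gives $|(A^{-T})^{n}m|\gtrsim\lambda^{-n}|m|$, so that for $|m|\gtrsim(\lambda\nu)^{n/2}$ the weight $|k|^{2p}$ still gains $\le\nu^{-2np}$, while for $|m|\lesssim(\lambda\nu)^{n/2}$ one simply uses $|(A^{-T})^{n}m|^{2p}\le 1\le M^{2n}|m|^{-2L}$, valid once $M$ is chosen with $M^{1/L}$ a suitable multiple of $(\lambda\nu)^{1/2}$. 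Combining these per-mode bounds with $|\widehat\varphi(m)|^{2}\le|m|^{2L}\|\varphi\|_{H^{-L}}^{2}$ for the residual small-$|m|$ modes and summing over $m\in\Z^{2}\setminus\{0\}$ gives $\|\Lc_{f,\kappa}^{n}\varphi\|_{\Hc^{p}}^{2}\le C\nu^{-2np}\|\varphi\|_{\Hc^{p}}^{2}+CM^{2n}\|\varphi\|_{H^{-L}}^{2}$; taking square roots is \eqref{eq:LYapp}. The perturbative spreading from the first paragraph is incorporated by first shrinking $\epsilon$ so that it cannot push a frequency across a cone boundary faster than the hyperbolic dynamics resolves it — equivalently, one may replace the hard cutoff at $\partial\Cc^{u}$ by a smooth cutoff with an $\epsilon$-buffer, producing a norm equivalent to $\|\cdot\|_{\Hc^{p}}$, as in \cite[Section 3.3]{demersTransferOperatorsHyperbolic2021} and \cite{jezequel2023minicourse} — and by using the rapid decay of $K_{f}$ to control the few boundary-crossing paths; the crude bound $\|\Lc_{f}\varphi\|_{\Hc^{p}}\le M_{1}\|\varphi\|_{\Hc^{p}}$, which follows from the same frequency localization of $K_{f}$, gives in particular that $\Lc_{f}$ extends boundedly to $\Hc^{p}$ and absorbs any finitely many small values of $n$ after enlarging $C_{2}$.

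The main obstacle is precisely this last point: showing that the \emph{fixed} cone $\Cc^{u}$ of the unperturbed linear map $A^{T}$ remains adapted to the genuinely nonlinear operator $\Lc_{f,\kappa}$ — simultaneously controlling the non-uniformity of the expansion near $\partial\Cc^{u}$ (which is what dictates the loss $\nu<\lambda$ and the appearance of the threshold $(\lambda\nu)^{n/2}$) and the frequency spreading induced by $\psi$ and, interleaved, by the heat semigroup. This is the technical heart reproduced from the anisotropic-space literature, and is where the bulk of the appendix's effort is spent.
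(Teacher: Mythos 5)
Your overall Fourier-side strategy (kernel localization around $A^{-T}m$ plus a cone-orbit weight comparison) has the right shape, and your case analysis of the linear orbits $(A^{-T})^{j}m$ against the weights $|k|^{\pm 2p}$ is essentially correct; but two of the reductions you lean on fail, and they are exactly where the content of the proposition lives. First, the claim that the interleaved heat factors can be discarded because ``every intermediate heat factor can only decrease Fourier coefficients'' is not valid: $e^{\kappa\Delta}$ is indeed a contraction for any norm monotone in the moduli $|\hat h(k)|$, but that mode-wise monotonicity is not preserved under composition with $\Lc_f$, which redistributes mass among modes with phases that can interfere --- decreasing each $|\hat h(k)|$ tells you nothing mode-wise about $|\widehat{\Lc_f h}(k)|$, so you cannot strip the inner heat factors out of $(e^{\kappa\Delta}\Lc_f)^n$. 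The paper handles $\kappa>0$ by a different device: a Feynman--Kac/stochastic realization $\Lc_{f,\kappa}^{n}\varphi=\int \Lc^{n}_{\underline{\omega}}\varphi\,d\P(\underline{\omega})$, where $\Lc^{n}_{\underline{\omega}}$ is the transfer operator of a random composition of translated copies of $f$; translations do not change derivatives, so the cone estimates and hence the Lasota--Yorke inequality hold pathwise with $\kappa$-independent constants, and the Bochner-integral triangle inequality finishes.

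Second, iterating the one-step kernel via ``$|\widehat{\Lc_f^n\varphi}(k)|$ is dominated by a sum over paths of $\prod_j|K_f(m_{j-1},m_j)|$'' surrenders the $L^2$ structure: while $\sum_k|K_f(k,m)|^2=1$, the corresponding $\ell^1$ sums grow with $\epsilon|m|$, so the absolute-value path sum accumulates losses polynomial in the frequencies visited, which cannot be absorbed into the gain $\nu^{-np}$. This is why the paper never iterates the one-step kernel: it works with the $n$-step map at once, imposing the cone conditions on the derivative cocycle $(D_xF^n)^T$ with $F=f^{-1}$ (a $C^1$-open condition), bounding the ``allowed'' dyadic-block transitions $\Pi_N^t\Lc^n\Pi_{N'}^{t'}$ by the exact $L^2$ isometry, and bounding the ``forbidden'' ones by an $n$-step non-stationary phase estimate on $\int_{\T^2} e^{2\pi i(k'\cdot F^nx-k\cdot x)}\,dx$ via a lower bound on $|(D_xF^n)^Tk'-k|$. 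Your closing paragraph correctly flags the residual difficulty (the fixed cone versus the nonlinear dynamics, and the frequency spreading), but it defers precisely the step that forces this change of architecture; as written, the argument does not close.
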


The proof we give largely follows \cite{baladi2007anisotropic} (see also \cite{baladi2017quest}). We are indebted to the unpublished notes of J\'ez\'equel \cite{jezequel2023minicourse} on this topic, from which we borrow significantly in all but Section \ref{subsec:treatDiffusivityAPP}. 

\subsubsection*{Plan for Section \ref{sec:APP}}

After some preliminaries (Section \ref{subsec:prelimAPP}), we give the proof of the uniform Lasota-Yorke inequality when $\kappa = 0$ in Section \ref{subsec:proofLYapp}. A key step in the proof, Lemma \ref{lem:nonstatPhaseApp} is proved separately in Section \ref{subsec:proofStatPhaseApp}. Finally, changes needed in the treatment of $\kappa > 0$ are documented in Section \ref{subsec:treatDiffusivityAPP}.

\subsection{Preliminaries}\label{subsec:prelimAPP}

\subsubsection*{Standing assumptions}

Throughout, $\nu \in (1, \lambda)$ is fixed, as are $p > 0$ and $L > p$. A volume-preserving diffeomorphism $f$ near to $f_A$ is fixed. At times, we will write $\Lc = \Lc_f$ for brevity.

It will be convenient to write \[F = f^{-1}, F_A = f^{-1}_A \, , \] noting that estimates on $d_{C^1}(f, f_A)$ and $\| f \|_{C^\infty}$ translate to those of $d_{C^1}(F, F_A), \| F\|_{C^\infty}$, and noting that $\Lc \varphi = \varphi \circ F$.

Next, we note that $F, F_A$ exchange the roles of stable/unstable directions due to time reversal. As such, it will be convenient to use the notation 
\[\Cc^s_* = \Cc^u \,,  \qquad \Cc^u_* = \Cc^s \, , \]
noting that $\Cc^u_*$ is unstable and $\Cc^s_*$ stable, respectively, for $A^{-T}$. 

Next, $\epsilon > 0$ will be assumed sufficiently small to ensure hyperbolicity of $f_A$ translates to that of $f$, hence that of $F = f^{-1}$. Precisely, we shall assume throughout that there exists 
$C_0 > 0$ such that for all $x \in \T^2, n \geq 1$, 
        \begin{gather}\label{eq:expansionFapp}
            \begin{gathered}
                (D_x F^n)^T(\Cc^u_*)  \subset \Cc^u_* \, , \quad \text{ and} \\ 
            \left|\left(D_{x}F^{n}\right)^{T}w\right|  \geq C_{0}^{-1}\nu^{n}\left|w\right|\quad\forall w\in \mathcal{C}_*^{u} \,, 
            \end{gathered}
        \end{gather}
        while 
        \begin{gather}\label{eq:contractionFapp}\begin{gathered}
            (D_x F^n)^T(\Cc^s_*) \supset \Cc^s_* \, , \quad \text{ and} \\ 
            \left|\left(D_{x}F^{n}\right)^{T}v\right| \leq C_{0}\nu^{-n}\left|v\right|\quad\forall v\in (D_x F^n)^{-T} \mathcal{C}_*^{s}  \, . 
        \end{gathered} \end{gather}
We emphasize that these conditions are $C^1$ open in the mapping $f$, hence $C^1$ open in $F$. We also point out that \eqref{eq:expansionFapp}, \eqref{eq:contractionFapp} are all we shall require of the $C^1$ closeness of $f, f_A$, a fact relevant in Section \ref{subsec:treatDiffusivityAPP}. 

Finally, $\epsilon > 0$ will be taken small enough so that uniformly over $v \in \Cc^u_* \setminus \{ 0 \}$ and $w \in \Cc^s_* \setminus \{ 0 \}$, one has 
\begin{align} \label{eq:angleBoundAPP}\angle \left((D_x F^n)^T v, w \right) \geq \vartheta > 0\end{align}
for some positive $\vartheta$ and for all $n$; that this can be arranged follows from the fact that $A^{-T}$ maps $\Cc^u_*\setminus \{ 0 \}$ strictly into its interior. 


\subsection*{Alternative form of $\| \cdot \|_{\Hc^p}$}

For $\varphi \in C^\infty$ and $N \geq 1$, $t \in \{ u, s\}$, we define the Littlewood-Paley projections 
    \begin{align*}
        \Pi_{N}^{t}\varphi & :=\sum_{\substack{k\in \mathcal{C}_*^{t} \, , \\
        \left|k\right|\in[2^{N-1},2^{N})
        }
        }\widehat{\varphi}\left(k\right)e_{k}
        \end{align*}
    and  
    \[ \| \varphi \|_{\Hc^p}' := \left|\widehat{\varphi}\left(0\right)\right|^{2}+\sum_{N\geq1}\left(2^{-2p N}\left\Vert \Pi_{N}^{u}\varphi\right\Vert _{L^{2}}^{2}+2^{2p N}\left\Vert \Pi_{N}^{s}\varphi\right\Vert _{L^{2}}^{2}\right)
    \]
    It is straightforward to check that $\| \cdot \|_{\Hc^p}$ and $\| \cdot \|_{\Hc^p}'$ are uniformly equivalent up to a factor depending only on $p$. As a reminder, we point out that $\Pi^u_N$ projects to modes in $\Cc^u_* = \Cc^s$ and corresponds to negative-regularity Fourier modes, while $\Pi^s_N$ projects to modes in $\Cc^s_* = \Cc^u$ and corresponds to positive-regularity modes. 


\subsubsection*{Reduction}

We now argue that to prove \eqref{eq:LYapp} for $\kappa = 0$, it  suffices to show the following: if $F = f^{-1}$ satisfies the hyperbolicity estimates \eqref{eq:expansionFapp}, \eqref{eq:contractionFapp}, then there exists a constant $\tilde C > 0$ such that for any $n \geq 1$, there exists $C_n > 0$ such that 

\begin{align}\label{eq:LYsuffAPP}
    \left\Vert \mathcal{L}^{n}\varphi\right\Vert _{\mathcal{H}^p}'\leq \tilde C \nu^{-p n}\left\Vert \varphi\right\Vert _{\mathcal{H}^p}'+C_{n}\left\Vert \varphi\right\Vert _{H^{-L}}
\end{align}

Crucially, while $C_n$ is permitted to depend on $n$, the constant  $\tilde C$ does not, and both constants are uniform in the upper bound $\hat C$ for $\| f \|_{C^\infty}$. With $n_0$ fixed so that $\tilde C \nu^{-p n_0} < 1$, it will follow that 
\[\left\Vert \mathcal{L}^{n}\varphi\right\Vert _{\mathcal{H}^p} ' \leq ( 1 + \| \Lc_f \|_{\Hc^p}')^{n_0} (\tilde C \nu^{-p n_0})^q \left\Vert \varphi\right\Vert _{\mathcal{H}^p}'+  C_{n_0} ( 1 + \| \Lc_f \|_{H^{-L}}')^{n}  \left\Vert \varphi\right\Vert _{H^{-L}}\]
where $n = q n_0 + r, r \in \{ 0, \dots, n_0 - 1\}$. Equation \eqref{eq:LYapp} now follows, on replacing $\nu$ above with $\nu' \in (\nu, \lambda)$ and taking $n_0$ yet larger so that $C (\nu')^{-n_0 p} \leq \nu^{-n_0 p}$.

\subsection{Proof of time-$n$ Lasota-Yorke estimate \eqref{eq:LYsuffAPP}}\label{subsec:proofLYapp}

Let
\[
\Gamma=\mathbb{N}\times\left\{ u,s\right\} =\left\{ \left(N,t\right):N\in\mathbb{N},t\in\left\{ u,s\right\} \right\} 
\]
and, in a slight abuse of notation, we shall regard each $\Gamma_{N, t}$ as a subset of $\Z^2$, with 
\[
k\in\Gamma_{N,t}\iff\left|k\right|\in[2^{N-1},2^{N})\text{ and }k\in\mathcal{C}_*^{t} \,, \quad t \in \{ u, s\} \, .
\]

Below, we will record `significant' time-$n$ transitions of Fourier mass between the sets $\Gamma_{N,t}$ using the relation $\hookleftarrow_{n}$ on $\Gamma$. Given $(N, t), (N', t') \in \Gamma$, we will write $(N, t) \hookleftarrow_{n} (N', t')$ and say \emph{$(N', t')$ leads to $(N, t)$} if one of the following (mutually exclusive) conditions is met: 
\begin{enumerate}
\item $t = t' = u$ and $N\geq N'+n\log_{2}\nu-\log_{2}\left(\frac{C_{0}}{4}\right)$; 
\item $t = t' = s$ and $N'\geq N+n\log_{2}\nu-\log_{2}\left(\frac{C_{0}}{4}\right)$; or 
\item $t = u, t' = s$ and $\max\left(N,N'\right)\geq n\log_{2}\nu$. 
\end{enumerate}
If none of the conditions are met, we write $(N, t) \cancel{\hookleftarrow}_{n} (N', t')$. 

\begin{remark}
These conditions are chosen to emulate the way $A^{-T}$ transmits Fourier modes. Rule 1 captures the way that $A^{-T}$ leaves invariant $\Cc^u_*$ and expands vectors along it (c.f. \eqref{eq:expansionFapp}), while rule 2 captures that $A^{-T}$ contracts vectors $v$ for which $v, A^{-T} v \in \Cc^s_*$ (c.f. \eqref{eq:contractionFapp}). Finally, rule 3 allows for transitions from the stable cone $\Cc^s_*$ to $\Cc^u_*$. 
\end{remark}

The following estimate controls the `insignificant' transitions $(N, t) \cancel{\hookleftarrow}_{n} (N', t')$. 

    

\begin{lemma}[Non-stationary phase]\label{lem:nonstatPhaseApp}
Assume the mapping $F$ satisfies \eqref{eq:expansionFapp} and \eqref{eq:contractionFapp}. Let $\beta > 0, n \geq 1$. Then, there exists a constant $C_{n, \beta} > 0$ such that 
if $\left(N,t\right)\cancel{\hookleftarrow}_{n}\left(N',t'\right)$, then 
we have 
\begin{equation}
\left\Vert \Pi_{N}^{t}\mathcal{L}^{n}\Pi_{N'}^{t'}\right\Vert _{L^{2}\to L^{2}}\leq C_{n,\beta}2^{-\beta\max\left(N,N'\right)}\label{eq:non-stationary}
\end{equation}
\end{lemma}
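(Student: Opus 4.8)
The plan is to pass to the Fourier--matrix representation of the operator, isolate the resulting oscillatory integrals, and run a non-stationary phase argument whose one essential input is a uniform lower bound on $|\nabla\Phi|$ forced by the negation of $\hookleftarrow_n$.

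First, in the Fourier basis $\Pi^{t}_{N}\mathcal{L}^{n}\Pi^{t'}_{N'}$ is the matrix $(M_{k,k'})$ with $M_{k,k'}=\langle e_{k},\mathcal{L}^{n}e_{k'}\rangle$ for $k\in\Gamma_{N,t}$, $k'\in\Gamma_{N',t'}$ and $M_{k,k'}=0$ otherwise. Since $\#\Gamma_{N,t}\lesssim 2^{2N}$ and likewise for $N'$, Schur's test reduces \eqref{eq:non-stationary} to the pointwise estimate
\[
|M_{k,k'}|\ \leq\ C_{n,m}\,\max(|k|,|k'|)^{-m}\qquad(k\in\Gamma_{N,t},\ k'\in\Gamma_{N',t'})
\]
for every $m\in\mathbb{N}$, with $C_{n,m}$ independent of $k,k',N,N'$: indeed each row and column sum of $(M_{k,k'})$ is then $\lesssim C_{n,m}2^{(2-m)\max(N,N')}$, so $\|\Pi^{t}_{N}\mathcal{L}^{n}\Pi^{t'}_{N'}\|_{L^{2}\to L^{2}}\lesssim C_{n,m}2^{(2-m)\max(N,N')}$, and taking $m=\beta+2$ gives the claim. (The constant $C_{n,\beta}$ may depend on $n$ and on finitely many derivatives of $f$; only $\tilde C$ in \eqref{eq:LYsuffAPP} must be $n$-independent.) The matrix entry is the oscillatory integral
\[
M_{k,k'}=\int_{\mathbb{T}^{2}}e^{-2\pi i k\cdot x}\,e^{2\pi i k'\cdot F^{n}(x)}\,dx=\int_{\mathbb{T}^{2}}e^{2\pi i\Phi(x)}\,dx,\qquad \Phi(x):=k'\cdot F^{n}(x)-k\cdot x,
\]
so that $\nabla\Phi(x)=(D_{x}F^{n})^{T}k'-k$; there are no boundary terms on the closed manifold $\mathbb{T}^{2}$.

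The heart of the argument is the geometric claim: \emph{if $(N,t)\,\cancel{\hookleftarrow}_{n}\,(N',t')$ then $\inf_{x\in\mathbb{T}^{2}}|\nabla\Phi(x)|\geq c_{n}\max(|k|,|k'|)$ for all admissible $k,k'$}, with $c_{n}>0$ depending only on $n$ (and $C_{0},\nu,\vartheta$). This is proved by cases matching the three rules defining $\hookleftarrow_n$. For $t=t'=u$: \eqref{eq:expansionFapp} gives $|(D_{x}F^{n})^{T}k'|\geq C_{0}^{-1}\nu^{n}|k'|$, while the negation of Rule~1 forces $|k|$ below a fixed fraction of $C_{0}^{-1}\nu^{n}|k'|$ (this is exactly what the threshold $n\log_{2}\nu-\log_{2}(C_{0}/4)$ is calibrated for), so $|\nabla\Phi|\geq|(D_{x}F^{n})^{T}k'|-|k|\gtrsim_{n}\max(|k|,|k'|)$. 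For $t=t'=s$: apply $(D_{x}F^{n})^{-T}$ first, using that \eqref{eq:contractionFapp} makes it expand $\Cc^{s}_{*}$ by at least $C_{0}^{-1}\nu^{n}$, with the negation of Rule~2 forcing $|k'|$ below a fixed fraction of $C_{0}^{-1}\nu^{n}|k|$; then $|\nabla\Phi|\geq\|(D_{x}F^{n})^{T}\|^{-1}|(D_{x}F^{n})^{-T}\nabla\Phi|\gtrsim_{n}\max(|k|,|k'|)$. For the mixed case $t=u,\ t'=s$: the negation of Rule~3 forces $|k|,|k'|<\nu^{n}$, and one combines this smallness with the angular separation \eqref{eq:angleBoundAPP} between $(D_{x}F^{n})^{T}\Cc^{u}_{*}$ and $\Cc^{s}_{*}$ (equivalently, the robust forward invariance of the stable cone under $(D_{x}F^{n})^{-T}$) to prevent $(D_{x}F^{n})^{T}k'$ from being comparable to $k$.

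Finally, with $M:=\max(|k|,|k'|)$ and $|\nabla\Phi|\geq c_{n}M$ everywhere, write $\Phi=M\psi$; then $|\nabla\psi|\geq c_{n}$ and $\|\psi\|_{C^{m+1}}\lesssim_{n}1$, since $|k|,|k'|\leq M$ and $\|F^{n}\|_{C^{m+1}}$ is finite. Iterating the transport operator $u\mapsto\frac{1}{2\pi i M}\,\nabla\!\cdot\!\big(\tfrac{\nabla\psi}{|\nabla\psi|^{2}}\,u\big)$, which preserves $e^{2\pi i M\psi}$ under integration over $\mathbb{T}^{2}$, exactly $m$ times yields $|M_{k,k'}|=\big|\int_{\mathbb{T}^{2}}(\cdots)^{m}[1]\,e^{2\pi i M\psi}\big|\leq C_{n,m}M^{-m}$, which is the pointwise bound fed into Schur's test. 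The hard part is the geometric claim in the mixed case (Rule~3): extracting a clean, $k,k'$-uniform lower bound on $|\nabla\Phi|$ when a stable wavenumber is transported toward the unstable cone — this is precisely where the threshold $n\log_{2}\nu$ and the uniform angular gap $\vartheta$ of \eqref{eq:angleBoundAPP} must be unwound carefully; the $u\!\to\!u$ and $s\!\to\!s$ cases reduce to routine magnitude comparisons once the thresholds in $\hookleftarrow_n$ are decoded.
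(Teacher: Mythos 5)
Your overall architecture (oscillatory integral for the matrix entry $M_{k,k'}$, a uniform lower bound on $|\nabla\Phi|$, and $m$-fold integration by parts; Schur's test in place of the paper's Cauchy--Schwarz step) matches the paper's proof for the cases $(t,t')=(u,u)$ and $(s,s)$. But your case analysis of the negation of $\hookleftarrow_n$ is wrong in both mixed cases, and this is where the lemma actually lives.

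First, you omit the case $(t,t')=(s,u)$ entirely. Since none of the three rules applies when $t=s,\ t'=u$, every such pair satisfies $(N,s)\,\cancel{\hookleftarrow}_n\,(N',u)$, and these pairs carry the estimate that stops mass from returning from the negative-regularity cone $\Cc^u_*$ to the positive-regularity cone $\Cc^s_*$ -- the step the Lasota--Yorke argument for the stable contribution relies on. This is precisely the case where \eqref{eq:angleBoundAPP} is used: $k'\in\Cc^u_*$, so $(D_xF^n)^Tk'$ stays in $\Cc^u_*$ and is uniformly transverse to $k\in\Cc^s_*$, giving $|\nabla\Phi|\gtrsim |(D_xF^n)^Tk'|+|k|\gtrsim 2^{\max(N,N')}$. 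Second, in the case you do call ``mixed,'' $(t,t')=(u,s)$ with $\max(N,N')<n\log_2\nu$, your proposed gradient lower bound is unjustified and in general false: \eqref{eq:angleBoundAPP} controls $(D_xF^n)^Tv$ only for $v\in\Cc^u_*$, whereas here $k'\in\Cc^s_*$, and \eqref{eq:contractionFapp} gives no invariance of $\Cc^s_*$ under $(D_xF^n)^T$ -- indeed $(D_xF^n)^Tk'$ can migrate into $\Cc^u_*$ and land near (or, for $f=f_A$, exactly on) $k$, so $\nabla\Phi$ can vanish. No phase argument is needed or possible there: since $\max(N,N')<n\log_2\nu$, the trivial bound $\|\Pi^t_N\mathcal{L}^n\Pi^{t'}_{N'}\|_{L^2\to L^2}\leq 1\leq \nu^{\beta n}\,2^{-\beta\max(N,N')}$ already yields \eqref{eq:non-stationary} with an $n$-dependent constant, which is all the lemma asks for. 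With the $(s,u)$ case added and the $(u,s)$ case replaced by this trivial estimate, your proof closes.
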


The proof is deferred till Section \ref{subsec:proofStatPhaseApp}. For the rest of Section \ref{subsec:proofLYapp} we assume Lemma \ref{lem:nonstatPhaseApp} and complete the proof of the estimate \eqref{eq:LYsuffAPP}.


Let $\varphi \in \Hc^p_0, n \geq 1, L > p$. Below, $\beta > L$ will be enlarged a finite number of times to suit our purposes. Towards the end of estimating $\| \Lc^n \varphi\|_{\Hc^p}'$, we begin by estimating the stable contribution  $\Pi^s_N \Lc^n \varphi$ for $N \geq 1$ is fixed. 
\begin{align*}
 & 2^{p N}\left\Vert \Pi_{N}^{s}\mathcal{L}^{n}\varphi\right\Vert _{L^{2}}\\
 & \leq2^{p N}\left\Vert \Pi_{N}^{s}\mathcal{L}^{n}\sum_{N'\geq1}\Pi_{N'}^{s}\varphi\right\Vert _{L^{2}}+2^{p N}\left\Vert \Pi_{N}^{s}\mathcal{L}^{n}\sum_{N'\geq1}\Pi_{N'}^{u}\varphi\right\Vert _{L^{2}}\\
 & \leq2^{p N}\left\Vert \Pi_{N}^{s}\mathcal{L}^{n}\sum_{N':\left(N,s\right)\hookleftarrow_{n}\left(N',s\right)}\Pi_{N'}^{s}\varphi\right\Vert _{L^{2}}+2^{p N}\sum_{\left(N',t'\right):\left(N,s\right)\cancel{\hookleftarrow}_{n}\left(N',t'\right)}\left\Vert \Pi_{N}^{s}\mathcal{L}^{n}\Pi_{N'}^{t'}\varphi\right\Vert _{L^{2}} \,. 
\end{align*}
Above, we are using the fact that $(N, s) \hookleftarrow_n (N', t')$ implies $t' = s$, i.e., wavenumbers in $\Cc^u_*$ never lead to those in $\Cc^s_*$. Using \eqref{eq:non-stationary} to control the second summation on the RHS, and using that $\|\Pi^s_N \Lc^n\|_{L^2} \leq 1$ due to volume-preservation, gives that $2^{p N}\left\Vert \Pi_{N}^{s}\mathcal{L}^{n}\varphi\right\Vert _{L^{2}}$ is bounded 
\begin{align*}
    & \leq2^{p N}\left\Vert \mathcal{L}^{n}\sum_{N':\left(N,s\right)\hookleftarrow_{n}\left(N',s\right)}\Pi_{N'}^{s}\varphi\right\Vert _{L^{2}}+2^{p N}\sum_{N'\geq1}O_{n,\beta}\left(2^{-\beta\max\left(N,N'\right)}\right)2^{N'L}\left\Vert \Pi_{N'}\varphi\right\Vert _{H^{-L}}\\
 & \leq2^{p N}\left\Vert \sum_{N':\left(N,s\right)\hookleftarrow_{n}\left(N',s\right)}\Pi_{N'}^{s}\varphi\right\Vert _{L^{2}}+O_{n,p,\beta,L}\left(2^{\left(p+ L -\beta\right)N}\right)\left\Vert \varphi\right\Vert _{H^{-L}} \, , 
\end{align*}
hence 
\begin{align} \label{eq:stableEst12app} 2^{2 p N} \| \Pi^s_N \Lc^n \varphi \|_{L^2}^2 \leq 2^{2 p N + 1} \left\Vert \sum_{N':\left(N,s\right)\hookleftarrow_{n}\left(N',s\right)}\Pi_{N'}^{s}\varphi\right\Vert _{L^{2}}^2 + O_{n, p, \beta, L} (2^{2 (p + L - \beta)N} ) \| \varphi\|_{H^{-L}}^2 \end{align}
on squaring and using the general inequality $(a + b)^2 \leq 2 (a^2 + b^2)$ for $a, b \geq 0$. 
Taking $\beta > p + |L|$, summing the second RHS term of \eqref{eq:stableEst12app} over $N \geq 1$ gives a quantity bounded as $\lesssim_{p, n, \beta, L} \| \varphi\|_{H^{-L}}^2$. For the first term, the fact that $\{ \Pi^s_{N'} \varphi : N' \geq 1\}$ is orthogonal gives
\begin{align*}
   &  \sum_{N \geq 1} 2^{2 p N+1}\left\Vert \sum_{N':\left(N,s\right)\hookleftarrow_{n}\left(N',s\right)}\Pi_{N'}^{s}\varphi\right\Vert _{L^{2}}^2 = \sum_{N \geq 1} 2^{2 p N} \sum_{N':\left(N,s\right)\hookleftarrow_{n}\left(N',s\right)} \| \Pi_{N'}^{s}\varphi\|_{L^2}^2 \\
   & = \sum_{N\geq1} 2^{2p N+1} \sum_{N'\geq N+n\log_{2}\nu-\log_{2}\left(\frac{C_{0}}{4}\right)} \| \Pi^s_{N'} \varphi\|_{L^2}^2 \\ 
   & = \sum_{N' \geq 1} \sum_{N \leq N' - n \log \nu + \log\left( \frac{C_0}{4}\right)} 2^{2p N+1} \| \Pi^s_{N'} \varphi\|_{L^2}^2 \\ 
   & \lesssim_{\neg n} \sum_{N' \geq 1} 2^{2 p N'} \nu^{-2 p n} \| \Pi^s_{N'} \varphi\|_{L^2}^2 \lesssim_{\neg n} \nu^{- 2 p n} \| \varphi\|_{\Hc^p}^2 \,, 
\end{align*}
having applied Tonelli from the second to the third lines. In total, for the stable contribution of $\Lc^n \varphi$ we have 
\begin{align}
    \sum_N 2^{2 p N} \| \Pi^s_N \Lc^n \varphi\|_{L^2}^2 \leq O_{\neg n} (\nu^{- 2 p n}) (\| \varphi\|_{\Hc^p}')^2 + O_{n, p, \beta, L} (\| \varphi\|_{H^{-L}}^2) \,. 
\end{align}

Turning to the unstable contribution $\Pi^u_N \Lc^n \varphi$, we estimate 
\begin{align*}
 & 2^{-p N}\left\Vert \Pi_{N}^{u}\mathcal{L}^{n}\varphi\right\Vert _{L^{2}}\\
 & \leq2^{-p N}\left\Vert \Pi_{N}^{u}\mathcal{L}^{n}\sum_{\substack{(N', t') : \\ (N, u) \hookleftarrow_n (N', t')}}\Pi_{N'}^{t'}\varphi\right\Vert _{L^{2}}+\sum_{\substack{(N', t') : \\ (N, u) \cancel\hookleftarrow_n (N', t')}}2^{-p N}\left\Vert \Pi_{N}^{u}\mathcal{L}^{n}\Pi_{N'}^{s}\varphi\right\Vert _{L^{2}}\\
 & \leq2^{-p N}\left\Vert \sum_{N':\left(N,u\right)\hookleftarrow_{n}\left(N',u\right)}\Pi_{N'}^{u}\varphi\right\Vert _{L^{2}}+2^{-p N}\left\Vert \sum_{N':\left(N,u\right)\hookleftarrow_{n}\left(N',s\right)}\Pi_{N'}^{s}\varphi\right\Vert _{L^{2}} \\ & \quad +O_{n,\beta,L}\left(2^{\left(-p+ L-\beta\right)N}\right)\left\Vert \varphi\right\Vert _{H^{-L}} \, , 
\end{align*}
with the main difference now being that we incorporate transitions from $\Cc^s_*$ and $\Cc^u_*$ in the `significant' terms. Squaring this inequality and using the general fact that $(a + b + c)^3 \leq 3 (a^2 + b^2 + c^2)$ for nonnegative reals $a, b, c$ gives
\begin{align}\label{eq:intermUestApp12}\begin{aligned}
   &  2^{-2p N}\left\Vert \Pi_{N}^{u}\mathcal{L}^{n}\varphi\right\Vert _{L^{2}}^2 \\ 
   &  \leq 3 \cdot 2^{-2 p N} \left\Vert \sum_{N':\left(N,u\right)\hookleftarrow_{n}\left(N',u\right)}\Pi_{N'}^{u}\varphi\right\Vert _{L^{2}}^2 + 
    3 \cdot 2^{- 2 p N} \left\Vert \sum_{N':\left(N,u\right)\hookleftarrow_{n}\left(N',s\right)}\Pi_{N'}^{s}\varphi\right\Vert _{L^{2}}^2 \\ 
    & + O_{n, \beta, L} (2^{2 (-p + L - \beta) N}) \| \varphi\|_{H^{-L}}^2 \,. 
\end{aligned}\end{align}

Summing now in $N$, the third RHS term of \eqref{eq:intermUestApp12} is $\lesssim_{p, n , \beta, L} \| \varphi\|_{H^{-L}}^2$.
Again using $L^{2}$-orthogonality of $\left(\Pi_{N'}^{t'}\varphi\right)_{\left(N',t'\right)}$, the sum in $N$ of the first and second RHS terms of \eqref{eq:intermUestApp12} are bounded 
\begin{align*}
 & \leq\sum_{N\geq1}\sum_{N'\leq N-n\log_{2}\nu+\log_{2}\left(\frac{C_{0}}{4}\right)}2^{-2p N}\cdot3\left\Vert \Pi_{N'}^{u}\varphi\right\Vert _{L^{2}}^{2}+\sum_{N\geq1}\sum_{\substack{N': \\ \max\left(N,N'\right)\geq n\log_{2}\nu}}2^{-2p N}\cdot3\left\Vert \Pi_{N'}^{s}\varphi\right\Vert _{L^{2}}^{2} \\
 & \leq \sum_{N'\geq1}3 \left\Vert \Pi_{N'}^{u}\varphi\right\Vert _{L^{2}}^{2} \sum_{N\geq N'+n\log_{2}\nu-\log_{2}\left(\frac{C_{0}}{4}\right)}2^{-2p N}+\sum_{N'\geq1}3\cdot2^{2p N'}\left\Vert \Pi_{N'}^{s}\varphi\right\Vert _{L^{2}}^{2}\sum_{\substack{N: \\ \max\left(N,N'\right)\geq n\log_{2}\nu}}2^{-2p\left(N+N'\right)}\nonumber \\
 & \leq O_{\neg n} (\nu^{- 2 p n}) \left( \sum_{N'\geq1} 2^{-2p N'} \left\Vert \Pi_{N'}^{u}\varphi\right\Vert _{L^{2}}^{2}+\sum_{N'\geq1}2^{2p N'}\left\Vert \Pi_{N'}^{s}\varphi\right\Vert _{L^{2}}^{2} \right) \nonumber \\
 & = O_{\neg n}\left(\nu^{-2p n}\right)(\left\Vert \varphi\right\Vert _{\mathcal{H}^p}')^{2} \, , 
\end{align*}
hence in total, 
\begin{align*}
    \sum_{N} 2^{-2pN} \| \Pi^u_N \Lc^n \varphi\|^2_{L^2} \leq O_{\neg n}(\nu^{-2 p n}) ( \| \varphi\|'_{\Hc^p})^2 + O_{n, p, \beta, L} (\| \varphi\|_{H^{-L}}^2) \,. 
\end{align*}
Combining the stable and unstable contributions implies a bound of the form 
\[(\| \Lc^n \varphi\|'_{\Hc^p})^2 \leq O_{\neg n} (\nu^{- 2 p n} ) (\| \varphi\|_{\Hc^p}')^2 + O_{n, p, \beta, L} ( \| \varphi \|_{H^{-L}}^2) \,. \]
This implies \eqref{eq:LYsuffAPP}, as desired. 


\subsection{Proof of non-stationary phase estimate (Lemma \ref{lem:nonstatPhaseApp})}\label{subsec:proofStatPhaseApp}

Let $k, k' \in \Z^2 \setminus \{ 0\}$ and observe that the Fourier transform of $\Lc e_{k'}$ of the $k'$-Fourier mode $e_{k'}(x) := e^{2 \pi i (k' \cdot x)}$ is given by 
\[\Fc[\Lc^n e_{k'}](k) = \int_{\T^2} e^{2 \pi i (k' \cdot F^n x - k\cdot x)} dx \, . \]
Let 
\[G(x) = G_{k, k', n}(x) := k' \cdot F^n (x) - k \cdot x\]
denote the phase appearing in the above integral. 
In what follows, we will show that when $k \in \Gamma_{N, t}, k' \in \Gamma_{N', t'}$ and $(N, t) \cancel\hookleftarrow_n (N', t')$, the phase $G = G_{k, k', n}$ varies sufficiently fast to create cancellations, resulting in tight control on $\Fc[\Lc e_{k'}](k)$ in an argument reminiscent of the method of stationary phase. 

\medskip 

To start, fix $(N, t), (N', t')$ for which $(N, t) \cancel\hookleftarrow_n (N', t')$. If $\max(N, N') < n \log_2 \nu$, the desired estimate for $\| \Pi^t_N \Lc^n \Pi^{t'}_{N'}\|_{L^2}$ is clearly $O_{\hat C, n, \beta}(1)$, where we recall $\hat C$ is an upper bound for $\| f \|_{C^\infty}$.  So, without loss we will proceed under the additional assumption that $\max(N, N') \geq n \log_2 \nu$. 


Let $\psi \in L^2$ be a fixed mean-zero function. We compute 
\begin{align*}
\mathcal{L}^{n}\Pi_{N'}^{t'}\psi & =\sum_{k'\in\Gamma_{N',t'}} \mathcal{L}^{n}\left(\widehat{\psi}\left(k'\right)e^{i2\pi\left(k'\cdot x\right)}\right)=\sum_{k'\in\Gamma_{N',t'}}\widehat{\psi}\left(k'\right)e^{i2\pi\left(k'\cdot F^{n}x\right)}
\end{align*}
and estimate
\begin{align}
\left\Vert \Pi_{N}^{t}\mathcal{L}^{n}\Pi_{N'}^{t'}\psi\right\Vert _{L^{2}}^{2} & =\sum_{k\in\Gamma_{N,t}}\left|\sum_{k'\in\Gamma_{N',t'}}\widehat{\psi}\left(k'\right)\mathcal{F}\left(e^{i2\pi\left(k'\cdot F^{n}\left(\cdot\right)\right)}\right)\left(k\right)\right|^{2} \nonumber \\ 
& =\sum_{k\in\Gamma_{N,t}}\left|\sum_{k'\in\Gamma_{N',t'}}\widehat{\psi}\left(k'\right)\int_{\mathbb{T}^{2}}e^{i2\pi G\left(x\right)}\;dx\right|^{2}\nonumber \\
 & \leq \sum_{k\in\Gamma_{N,t}}  \left(\sum_{k'\in\Gamma_{N',t'}}\left|\widehat{\psi}\left(k'\right)\right|^{2}\right) \left( \sum_{k' \in \Gamma_{N', t'}} \left|\int_{\mathbb{T}^{2}}e^{i2\pi G_{k,k',n}\left(x\right)}\;dx\right|^{2} \right) \label{eq:HOlder}
\end{align}

To control the RHS we will show that the phase $G$ varies sufficiently fast, i.e., that the norm of its gradient
\[
\nabla G_{k, k', n}(x) = \left(D_{x}F^{n}\right)^{T}k'-k
\]
is controlled from below under our assumptions. 

\begin{claim}\label{cla:suffFastPhaseAPP}
    Assume $\max\{ N, N'\} \geq n \log_2 \nu$ and $\left(N,t\right)\cancel{\hookleftarrow}_{n}\left(N',t'\right)$ and let $k \in \Gamma_{N, t}, k' \in \Gamma_{N', t'}$. Then, 
\begin{equation}
\left|\nabla G_{k, k', n} (x)\right|\gtrsim_{n,\neg N,\neg N'}2^{\max\left(N,N'\right)}\label{eq:lower_grad_phi}
\end{equation}
\end{claim}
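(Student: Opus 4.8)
The plan is to reduce everything to the gradient lower bound \eqref{eq:lower_grad_phi}, from which Lemma~\ref{lem:nonstatPhaseApp} follows by a routine non-stationary phase argument: writing $G = G_{k,k',n}$, the higher derivatives $D^m G = k'\cdot D^m F^n$ ($m \geq 2$) are bounded by a constant depending only on $n$ and $\hat C$ times $|k'| \leq 2^{N'} \leq 2^{\max(N,N')}$, so once $|\nabla G| \gtrsim_n 2^{\max(N,N')}$ uniformly in $x$, integrating by parts $m > \beta$ times in $\int_{\T^2} e^{2\pi i G}\,dx$ gives a bound $\lesssim_{n,\beta} 2^{-\beta\max(N,N')}$, which feeds into the Cauchy--Schwarz estimate \eqref{eq:HOlder}. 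Throughout I would write $w = w(x) := (D_xF^n)^T k'$, so that $\nabla G(x) = w(x) - k$ and the task is to bound $|w(x) - k|$ from below uniformly in $x$.

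I would prove \eqref{eq:lower_grad_phi} by a case analysis on $(t,t')$. First, the case $(t,t') = (u,s)$ cannot occur under the hypotheses: then $k\in\Cc^u_*$, $k'\in\Cc^s_*$, and rule~3 together with $\max(N,N')\geq n\log_2\nu$ would force $(N,t)\hookleftarrow_n(N',t')$. For $(t,t') = (s,u)$ (which is never a significant transition), cone invariance in \eqref{eq:expansionFapp} puts $w\in\Cc^u_*$ while $k\in\Cc^s_*$, so the transversality estimate \eqref{eq:angleBoundAPP} gives $\angle(w,k)\geq\vartheta$ and hence $|w-k|\geq\sin\vartheta\,\max(|w|,|k|)$; combined with $|w|\geq C_0^{-1}\nu^n|k'|\geq C_0^{-1}\nu^n2^{N'-1}$ and $|k|\geq 2^{N-1}$ this gives $|w-k|\gtrsim_n 2^{\max(N,N')}$.

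The remaining cases $(u,u)$ and $(s,s)$ are where the precise form of the relation $\hookleftarrow_n$ is used to produce a definite separation. For $(u,u)$: $w\in\Cc^u_*$ with $|w|\geq C_0^{-1}\nu^n2^{N'-1}$, and the complement of rule~1, namely $N<N'+n\log_2\nu-\log_2(C_0/4)$, is calibrated (after possibly enlarging the additive constant) to give $|k|<2^N\leq\tfrac12|w|$, so that $|w-k|\geq\tfrac12|w|$, and since moreover $|w|>2|k|\geq 2^N$ one gets $|w-k|\gtrsim_n\max(2^N,\nu^n2^{N'})\geq 2^{\max(N,N')}$. For $(s,s)$ I would split according to whether $w$ lies in $\Cc^s_*$ or in $\Cc^u_*$. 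In the first case $k'\in(D_xF^n)^{-T}\Cc^s_*$, so the contraction estimate \eqref{eq:contractionFapp} gives the \emph{upper} bound $|w|\leq C_0\nu^{-n}|k'|\leq C_0\nu^{-n}2^{N'}$, and the complement of rule~2 is calibrated so that this forces $|w|\leq\tfrac12|k|$, whence $|w-k|\geq\tfrac12|k|\geq 2^{N-2}\gtrsim_n 2^{\max(N,N')}$ (using $2^{N'}\lesssim_n 2^N$ from the same complement). In the second case $(D_xF^n)^T$ has pushed $k'$ out of $\Cc^s_*$, so $k'$ lies in the thin shell $\Cc^s_*\setminus(D_xF^n)^{-T}\Cc^s_*$, a small-angle neighborhood of the $A^{-T}$-stable eigendirection $\R v^u_T$; a Diophantine bound for the (quadratic irrational) slope of $v^u_T$ then forces $|k'|$ to be large, which makes $|w|\approx_n\lambda^{-n}|k'|$ much smaller than $|k|$, and one concludes as before.

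I expect the main obstacle to be bookkeeping rather than conceptual: one must take $\epsilon$ small so that the uniform hyperbolicity estimates \eqref{eq:expansionFapp}--\eqref{eq:angleBoundAPP} hold for $f$, and choose the additive constants in $\hookleftarrow_n$ in exact agreement with the $C_0$ appearing there, so that each same-cone case really does yield the factor-of-two magnitude (or transversality) separation used above, with all implicit constants depending on $n$ only and never on $N,N'$ (which is automatic, since the cones, the angle $\vartheta$, and the expansion/contraction rates are fixed). The one genuinely delicate point is the last sub-case of $(s,s)$, where $w$ escapes the stable cone and the Diophantine property of the eigendirections of $A$ is needed to rule out near-cancellation of $w$ and $k$.
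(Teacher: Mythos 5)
Your handling of the configurations $(u,s)$ (vacuous under the standing hypothesis $\max(N,N')\ge n\log_2\nu$), $(s,u)$ (transversality via \eqref{eq:angleBoundAPP}), and $(u,u)$ (reverse triangle inequality with the expansion \eqref{eq:expansionFapp}) matches the paper's proof, and your caveat about calibrating the additive constants in $\hookleftarrow_n$ against $C_0$ is well taken. The gap is in the $(s,s)$ case, specifically your second sub-case, where $w=(D_xF^n)^Tk'$ leaves $\Cc^s_*$. First, the geometric picture is inverted: in the linear model, writing $k'=a^uv^u_T+a^sv^s_T$, the set $(D_xF_A^n)^{-T}\Cc^s_*=(A^T)^n\Cc^u$ is the \emph{thin} cone $\{|a^s|\le\lambda^{-2n}|a^u|\}$ around $v^u_T$, so $\Cc^s_*\setminus(D_xF^n)^{-T}\Cc^s_*$ is the \emph{bulk} of the cone, not a small-angle neighborhood of $\R v^u_T$; the Diophantine argument therefore has nothing to bite on, and $|k'|$ need not be large. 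Second, the estimate $|w|\approx_n\lambda^{-n}|k'|$ fails there precisely because in this sub-case the $v^s_T$-component of $k'$ is non-negligible and is \emph{expanded} by $\approx\lambda^n$ under $(A^{-T})^n$, so $|w|$ can be anywhere between $\approx\lambda^{-n}|k'|$ and $\approx\lambda^n|k'|$. Finally, $w$ can land arbitrarily close to the common boundary of $\Cc^u_*$ and $\Cc^s_*$, where there is no uniform transversality with $k\in\Cc^s_*$ (the bound \eqref{eq:angleBoundAPP} applies only to images $(D_xF^n)^Tv$ of $v\in\Cc^u_*$, not of $v\in\Cc^s_*$). So this sub-case does not close as written.

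The paper's proof of the $(s,s)$ case avoids the sub-case split entirely by factoring $\nabla G=(D_xF^n)^T\bigl(k'-(D_xF^n)^{-T}k\bigr)$, absorbing $\bigl\|\bigl((D_xF^n)^T\bigr)^{-1}\bigr\|\lesssim_n 1$ into the $n$-dependent constant, and then using that $(D_xF^n)^{-T}$ expands $k\in\Cc^s_*$ by $C_0^{-1}\nu^n$ (this is \eqref{eq:contractionFapp} read backwards: $\Cc^s_*$ is the unstable cone for the inverse dynamics). The complement of rule 2 then makes $|k'|$ subordinate to $C_0^{-1}\nu^n|k|$, so $|k'-(D_xF^n)^{-T}k|\gtrsim\nu^n2^{N}\gtrsim_n 2^{\max(N,N')}$. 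Your first sub-case (where $w$ stays in $\Cc^s_*$, so \eqref{eq:contractionFapp} gives the upper bound $|w|\le C_0\nu^{-n}|k'|$) is a correct alternative for that portion, but you should replace the entire $(s,s)$ analysis with the factorization argument, which handles both sub-cases at once and needs no arithmetic input about the eigendirections.
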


The proof of Claim \ref{cla:suffFastPhaseAPP} is deferred for the moment. To estimate the RHS of 
\eqref{eq:HOlder}, fix for now $k \in \Gamma_{N, t}, k' \in \Gamma_{N', t'}$ and write $G = G_{k, k', n}$ for short. Define the differential operator 
\[L\left(\psi\right):=\frac{1}{i2\pi}\frac{\nabla G\left(x\right)}{\left|\nabla G\left(x\right)\right|^{2}}\cdot\nabla\psi \, , \]
observing that (i) $L(e^{2 \pi i G}) = e^{2 \pi i G}$ and (ii) the formal $L^2$ adjoint $L^*$ is given by 
\[L^* (\psi) = -\frac{1}{i2\pi}\nabla \cdot \left(\frac{\nabla G\left(x\right)}{\left|\nabla G\left(x\right)\right|^{2}}\psi\right) \,,  \]
hence 
\[\left|\int_{\mathbb{T}^{2}}e^{i2\pi G\left(x\right)}\;dx\right|  =\left|\int_{\mathbb{T}^{2}}e^{i2\pi G\left(x\right)}\left(L^{*}\right)^{m}\left(1\right)\;dx\right|\]
for all $m \geq 1$. 

By induction, one can show that 
\[| (L^*)^m (1)(x)| \lesssim_m \frac{\| \nabla G\|_{C^m}^\ell}{| \nabla G(x)|^{\ell + m}} \]
for some $\ell = \ell_m$, pointwise in $x$, hence $|(L^*)^m(1)(x)| \lesssim_{m,n,\neg N, \neg N'} 2^{- m \max(N, N')}$ by \eqref{eq:lower_grad_phi}. Plugging back into \eqref{eq:HOlder} now gives

\begin{align*}
\left\Vert \Pi_{N}^{t}\mathcal{L}^{n}\Pi_{N'}^{t'}\psi\right\Vert _{L^{2}}^2  & \leq \sum_{k\in\Gamma_{N,t}}  \left(\sum_{k'\in\Gamma_{N',t'}}\left|\widehat{\psi}\left(k'\right)\right|^{2}\right) \left( \sum_{k' \in \Gamma_{N', t'}} \left|\int_{\mathbb{T}^{2}}e^{i2\pi G\left(x\right)}\left(L^{T}\right)^{m}\left(1\right)\;dx \right|^2  \right) \\
&\lesssim_{m,n,\neg N,\neg N'}\sum_{k\in\Gamma_{N,t}} 2^{2 N' -2m\max\left(N,N'\right)} \sum_{k'\in\Gamma_{N',t'}}\left|\widehat{\psi}\left(k'\right)\right|^{2} \\ 
& \lesssim2^{2 (N + N') -2m\max\left(N,N'\right)}\cdot \left\Vert \psi\right\Vert _{L^{2}}^2 \leq 2^{-\left(2m-4\right)\max\left(N,N'\right)}\left\Vert \psi\right\Vert _{L^{2}}^2
\end{align*}
Picking $m$ so that $2m-4>\beta$ completes the proof of \eqref{eq:non-stationary}. 
\bigskip

\begin{proof}[Proof of Claim \ref{cla:suffFastPhaseAPP}]
    We distinguish three cases. 
    \begin{enumerate}
    \item \uline{The case \mbox{$\left(t,t'\right)=\left(u,u\right)$}}: we
    must have $N<N'+n\log_{2}\nu - \log_{2}\left(\frac{C_{0}}{4}\right)$.
    Then \eqref{eq:expansionFapp} implies 
    \begin{align*}
    \left|\nabla G\left(x\right)\right| & =\left|\left(D_{x}F^{n}\right)^{T}k'-k\right|\geq C_{0}\nu^{n}\left|k'\right|-\left|k\right|\\
     & \geq\frac{C_{0}^{-1}}{2}\nu^{n}2^{N'}-2^{N}\geq\max\left\{ \frac{C_{0}^{-1}}{4}\nu^{n}2^{N'},2^{N}\right\} \gtrsim_{\neg n}2^{\max\left(N,N'\right)}
    \end{align*}
    \item \uline{The case \mbox{$\left(t,t'\right)=\left(s,s\right)$}}: we
    must have $N'<N+n\log_{2}\nu-\log_{2}\left(\frac{C_{0}}{4}\right)$. Here, we bound 
    \begin{align*} | \nabla G(x)| &= \left| (D_x F^n)^{T} (k' - (D_x F^n)^{-T} k) \right| \gtrsim_n \left| k' - (D_x F^n)^{-T} k \right| \,  \\ 
        & \geq |(D_x F^n)^{-T} k| - |k'| \, . \end{align*}
    The contraction estimate \eqref{eq:contractionFapp} implies $|(D_x F^n)^{-T} k| \geq C_0^{-1} \nu^n |k|$ for $k \in \Cc^s_*$ (note $\Cc^s_*$ is unstable for $(D_x F^n)^{-T}$), and so
    \begin{align*}
    \left|\nabla G\left(x\right)\right| & \gtrsim_n 
     \frac12 2^N C_0^{-1} \nu^n - 2^{N'} \geq \frac14 C_0^{-1} \nu^n 2^N \gtrsim_{\neg n} 2^{\max(N, N')} 
    \end{align*}
    as desired. Note that in this case, the constant in front of the ultimate lower bound on $|\nabla G(x)|$ depends on $n$. 
    
    \item \uline{The case \mbox{$\left(t,t'\right)=\left(s,u\right)$}}: In this case, 
    $\left(D_{x}F^{n}\right)^{T}k'\in \mathcal{C}_*^{u}$ and $k\in \mathcal{C}_*^{s}$. In particular, \eqref{eq:angleBoundAPP} implies that the angle between $(D_x F^n)^T k'$ and $k$ is bounded uniformly from below, hence 
    \begin{align*} 
    \left|\left(D_{x}F^{n}\right)^{T}k'-k\right|& \gtrsim_{\neg n} \left|\left(D_{x}F^{n}\right)^{T}k'\right|+\left|k\right|\geq \frac12 C_0^{-1} \nu^n 2^{N'} + \frac{2^{N}}{2} \\ 
    & \gtrsim_{\neg n, \neg N, \neg N'}2^{\max\left(N,N'\right)}
    \end{align*}
    \end{enumerate}
    We conclude \eqref{eq:lower_grad_phi} holds in all cases, and the proof is complete. 
    \end{proof}

\subsection{Lasota-Yorke estimate for $\Lc_{f, \kappa} = e^{\kappa \Delta} \Lc_f $ for $\kappa > 0$}\label{subsec:treatDiffusivityAPP}

\newcommand{\uo}{{\underline{\omega}}}

For $\omega \in \R^2$, let $f_\omega : \T^2 \to \T^2$ be given by 
$f_\omega (x) = f (x) - \omega$ where the argument of $f$ appearing here is implicitly taken modulo 1. For $n \geq 1$ and $\uo = (\omega_1, \omega_2, \dots), \omega_i \in \R^2$, we write 
\[f^n_\uo = f_{\omega_n} \circ \dots \circ f_{\omega_1} \,. \]
Viewing the $(\omega_i)$ as an IID sequence of random variables on some probability space $(\Omega, \Fc, \P)$ allows us to recover the positive-diffusivity transfer operator $\Lc_{f, \kappa}$ from the `randomly perturbed' transfer operators
\[\Lc^n_\uo \varphi := \varphi \circ (f^n_\uo)^{-1} \,. \]
by Feynman-Kac as follows. 

\begin{lemma}[Stochastic realization]
    Assume the $\omega_i$ are IID, each with the (multivariate) normal distribution with mean zero and variance $2 \kappa I_2$. Then, for $\varphi \in C^\infty$ and for each $n \geq 1$, it holds that 
    \begin{align}\label{eq:stochRealizationAPP} \Lcfk^n \varphi(x) = \int \Lc^n_\uo \varphi(x) \, d \P(\uo) \end{align}
    for all $x \in \T^2$. 
\end{lemma}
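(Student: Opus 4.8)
The plan is to identify the time-$\kappa$ heat semigroup $e^{\kappa\Delta}$ on $\T^2$ with \emph{averaging over a Gaussian random translation}, establish the claimed formula for $n=1$, and then bootstrap to $n$ iterates by an induction that exploits independence of the $\omega_i$ together with linearity of the transfer operators.

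First I would record the $n=1$ case. The kernel of the semigroup generated by $\kappa\Delta$ at time $1$ is the $\Z^2$-periodization of the Euclidean heat kernel $y\mapsto (4\pi\kappa)^{-1}e^{-|y|^2/(4\kappa)}$, which is exactly the density $g_\kappa$ of the law $\mathcal N(0,2\kappa I_2)$ (matching $4\pi\kappa$ with $2\pi\sigma^2$ forces $\sigma^2=2\kappa$). Hence, for any $\psi\in C^\infty(\T^2)$, viewing $\psi$ as $\Z^2$-periodic on $\R^2$, one has $e^{\kappa\Delta}\psi(x)=\int_{\R^2}g_\kappa(\omega)\,\psi(x-\omega)\,d\omega$. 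Applying this with $\psi=\Lc_f\varphi$, and using that $f_\omega^{-1}(x)=f^{-1}(x+\omega)$, so that $\Lc_{f_\omega}\varphi(x)=\varphi\big(f^{-1}(x+\omega)\big)$, together with the reflection symmetry $g_\kappa(-\omega)=g_\kappa(\omega)$ (substitute $\omega\mapsto-\omega$), I obtain
\[
\Lcfk\varphi(x)=e^{\kappa\Delta}\Lc_f\varphi(x)=\int_{\R^2}g_\kappa(\omega)\,\Lc_{f_\omega}\varphi(x)\,d\omega=\E_{\omega_1}\big[\Lc_{f_{\omega_1}}\varphi(x)\big],
\]
which is \eqref{eq:stochRealizationAPP} for $n=1$.

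Next I would carry out the induction on $n$. Using the cocycle identity $f^n_{\uo}=f_{\omega_n}\circ f^{n-1}_{\uo}$ and $\Lc_{g\circ h}=\Lc_g\Lc_h$, we have $\Lc^n_{\uo}=\Lc_{f_{\omega_n}}\Lc^{n-1}_{\uo}$, with $\Lc^{n-1}_{\uo}$ depending only on $\omega_1,\dots,\omega_{n-1}$. Every integrand in sight is continuous in $\uo$ and bounded in absolute value by $\|\varphi\|_{C^0}$, since each $\Lc_{f_{\omega_i}}$ is composition with a homeomorphism of $\T^2$, hence an $L^\infty$-isometry; so Tonelli's theorem and the interchange of the bounded linear operator $\Lc_{f_{\omega_n}}$ with a Lebesgue integral are both justified. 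Conditioning on $\omega_n$ and using independence,
\[
\E_{\uo}\big[\Lc^n_{\uo}\varphi(x)\big]=\E_{\omega_n}\Big[\big(\Lc_{f_{\omega_n}}\,\E_{\omega_1,\dots,\omega_{n-1}}[\Lc^{n-1}_{\uo}\varphi]\big)(x)\Big]=\E_{\omega_n}\big[\big(\Lc_{f_{\omega_n}}\Lcfk^{n-1}\varphi\big)(x)\big],
\]
where the second equality is the inductive hypothesis (note $\Lcfk^{n-1}\varphi\in C^\infty$ by smoothing of the heat semigroup). Applying the $n=1$ identity to the function $\Lcfk^{n-1}\varphi$ turns the right-hand side into $e^{\kappa\Delta}\Lc_f\big(\Lcfk^{n-1}\varphi\big)=\Lcfk^n\varphi(x)$, closing the induction.

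The only genuinely delicate point is the first paragraph: pinning down the normalization so the periodic heat kernel at time $\kappa$ is precisely the $\mathcal N(0,2\kappa I_2)$ density, and reconciling the sign convention $f_\omega(x)=f(x)-\omega$ with the translation $\psi\mapsto\psi(\,\cdot-\omega)$ appearing in the convolution — this last reconciliation is where the symmetry of the Gaussian is used. Once that is in place, the rest is routine bookkeeping with Fubini and linearity of the $\Lc_{f_{\omega}}$.
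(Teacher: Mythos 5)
Your proof is correct and follows the same route the paper indicates: it rests on the single observation that $e^{\kappa\Delta}$ coincides with averaging over a translation by a $\mathcal N(0,2\kappa I_2)$ random variable (the normalization $4\pi\kappa = 2\pi\sigma^2$ and the sign/symmetry bookkeeping are handled correctly), after which the $n$-fold identity follows by independence and Fubini. The paper states only this key identity and leaves the induction implicit, so your write-up is simply a fleshed-out version of the intended argument.
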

This is a discrete-time version of the standard Feynman-Kac realization of the heat kernel by averages over sample paths of Brownian motion, the role of which is played here by the IID normal random variables $(\omega_i)$. { It can be proven by simply observing that for any $\phi \in C^\infty$:
\[
e^{\kappa\Delta}\phi\left(x\right)=\int \phi\left(x+\omega_{1}\right)\;d\mathbb{P}\left(\omega_{1}\right)
\] }
We note that the use of stochastic realization to control spectral properties of the ``averaged'' operators $\Lcfk$ is a standard technique in works on random dynamical systems; see, e.g., \cite[Proposition 2.3]{aimino2015annealed}.  

Crucially, the expansion and contraction estimates \eqref{eq:expansionFapp}, \eqref{eq:contractionFapp} for $F = f^{-1}$ hold as well for the compositions $F^n_\uo := (f^n_\uo)^{-1}$, since the noise here is additive and does not affect the desired cones relations in the derivatives $D_x F^n_\uo$ (or, for that matter, $C^\infty$ estimates on $f_\omega, F_\omega$). As a consequence, the work in Sections \ref{subsec:proofLYapp} and \ref{subsec:proofStatPhaseApp} applies just the same, mutatis mutandi, to almost-every realization $\uo$, yielding the following `random' version of the uniform Lasota-Yorke inequality: 
\begin{align} \label{eq:randomLYapp} \| \Lc^n_\uo \varphi \|_{\Hc^p} \leq C_2 \nu^{-n p} \| \varphi \|_{\Hc^p} + C_3 M^n \| \varphi \|_{H^{-L}} \qquad \P-\text{a.s.}\, ,  \end{align}
with constants $C_2, C_3$ and $M$ identical to those from the $\kappa = 0$ case. 

It remains to control $\| \Lcfk^n \varphi \|_{\Hc^p}$ in terms of $\| \Lc^n_\uo \varphi \|_{\Hc^p}$. For this, we will view $\uo \mapsto \Lc^n_\uo \varphi$ as an $\Hc^p$-valued random variable and note that \eqref{eq:stochRealizationAPP} implies that $\Lcfk^n \varphi$ coincides with the $\Hc^p$-Bochner integral $\int \Lc^n_\uo \varphi \, d \P(\uo)$. By the triangle inequality,
\[\| \Lcfk^n \varphi \|_{\Hc^p} = \left\| \int \Lc^n_\uo \varphi \, d \P(\uo) \right\|_{\Hc^p} \leq \int \| \Lc^n_\uo \varphi \|_{\Hc^p} d \P(\uo) \, ,  \]
the RHS of which is bounded by \eqref{eq:randomLYapp}.

This completes the proof of the uniform Lasota-Yorke estimate (Proposition \ref{prop:uniformLasotaYorke3}) in the case of $\kappa > 0$.

\printbibliography

\end{document}